%
%
%
%
\documentclass[12pt,leqno,twoside]{amsart}
\usepackage{amssymb,amsmath,amsthm,soul,color}
\usepackage{t1enc}
\usepackage[cp1250]{inputenc}
\usepackage{a4,indentfirst,latexsym}
\usepackage{graphics}
\usepackage{mathrsfs}
\usepackage{cite,enumitem,graphicx}
\usepackage[colorlinks=true,urlcolor=blue,
citecolor=red,linkcolor=blue,linktocpage,pdfpagelabels,
bookmarksnumbered,bookmarksopen]{hyperref}
\usepackage[english]{babel}
\usepackage[left=2.61cm,right=2.61cm,top=2.72cm,bottom=2.72cm]{geometry}
\usepackage[metapost]{mfpic}
\usepackage[colorinlistoftodos]{todonotes}
\usepackage[normalem]{ulem}



\parindent 7mm \voffset -7mm
\hoffset -2mm \textwidth 168mm \textheight 225mm \oddsidemargin
0mm\evensidemargin 0mm\footnotesep 3mm \hbadness 10000

\newtheorem{Th}{Theorem}[section]
\newtheorem{Prop}[Th]{Proposition}
\newtheorem{Lem}[Th]{Lemma}
\newtheorem{Cor}[Th]{Corollary}

\newtheorem{Rem}[Th]{Remark}

\newcommand{\wt}{\widetilde}

\newcommand{\vp}{\varphi}

\newcommand{\eps}{\varepsilon}

\def\R{\mathbb{R}}


\def\D{\mathcal{D}}

\def\M{\mathcal{M}}

\def\irn{\int_{\rn}}
\def\rn{\mathbb{R}^N}

\newcommand{\cC}{{\mathcal C}}
\newcommand{\cD}{{\mathcal D}}

\newcommand{\cH}{{\mathcal H}}

\newcommand{\cM}{{\mathcal M}}
\newcommand{\cN}{{\mathcal N}}

\newcommand{\cS}{{\mathcal S}}

\newcommand{\la}{\lambda}
\newcommand{\De}{\Delta}

\newcommand{\Om}{\Omega}

\newcommand{\weakto}{\rightharpoonup}

\numberwithin{equation}{section}

\title[Least energy solutions to a cooperative Schr\"odinger system with $L^2$-bounds]{
	Least energy solutions to a cooperative system of Schr\"odinger equations with prescribed $L^2$-bounds: at least $L^2$-critical growth}

\author[J. Mederski]{Jaros\l aw Mederski}
\address[J. Mederski]{\newline\indent
	Institute of Mathematics,
	\newline\indent 
	Polish Academy of Sciences,
	\newline\indent 
	ul. \'Sniadeckich 8, 00-656 Warsaw, Poland
	\newline\indent 
	and
	\newline\indent 
	Department of Mathematics,
	\newline\indent 
	Karlsruhe Institute of Technology (KIT), 
	\newline\indent 
	D-76128 Karlsruhe, Germany
}
\email{\href{mailto:jmederski@impan.pl}{jmederski@impan.pl}}

\author[J. Schino]{Jacopo Schino}
\address[J. Schino]{\newline\indent
	Institute of Mathematics,
	\newline\indent 
	Polish Academy of Sciences,
	\newline\indent 
	ul. \'Sniadeckich 8, 00-656 Warsaw, Poland
	\newline\indent 
}
\email{\href{mailto:jschino@impan.pl}{jschino@impan.pl}}

\subjclass[2010]{Primary: 35Q40, 35Q60; Secondary: 35J20, 78A25.}
\date{\today}

\begin{document}
	\begin{abstract} 
		
		We look for least energy solutions to the cooperative systems of coupled Schr\"odinger equations
		\begin{equation*}
			\begin{cases}
				-\Delta u_i + \lambda_i u_i = \partial_iG(u)\quad \mathrm{in} \ \mathbb{R}^N, \ N \geq 3,\\
				u_i \in H^1(\mathbb{R}^N),\\
				\int_{\mathbb{R}^N} |u_i|^2 \, dx \leq \rho_i^2
			\end{cases}
			i\in\{1,\dots,K\}
		\end{equation*}
		with $G\geq 0$, where $\rho_i>0$ is prescribed and $(\lambda_i, u_i) \in \mathbb{R} \times H^1 (\mathbb{R}^N)$ is to be determined, $i\in\{1,\dots,K\}$.
		Our approach is based on the minimization of the energy functional over a linear combination of the Nehari and Poho\v{z}aev constraints intersected with the product of the closed balls in $L^2(\mathbb{R}^N)$ of radii $\rho_i$, which allows to provide general growth assumptions about $G$ and to know in advance the sign of the corresponding Lagrange multipliers. We assume that $G$ has at least $L^2$-critical growth at $0$ and admits Sobolev critical growth. The more assumptions we make about $G$, $N$, and $K$, the more can be said about the minimizers of the corresponding energy functional. In particular, if $K=2$, $N\in\{3,4\}$, and $G$ satisfies further assumptions, then $u=(u_1,u_2)$ is normalized, i.e., $\int_{\mathbb{R}^N} |u_i|^2 \, dx=\rho_i^2$ for $i\in\{1,2\}$.
		
	\end{abstract}
	
	\maketitle

	\section*{Introduction}
	\setcounter{section}{1}
	
	We consider the following system of autonomous nonlinear Schr\"odinger equations of gradient type
	\begin{equation}
		\label{eq}
		\left\{
		\begin{array}{ll}
			-\Delta u_1 + \lambda_1 u_1 = \partial_1 G(u)\\
			\cdots\\
			-\Delta u_K + \lambda_K u_K = \partial_K G(u)\\
		\end{array} \quad \hbox{in } \rn
		\right.
	\end{equation}
	with $u=(u_1,\dots,u_K)\colon\R^N\to\R^K$, which arises in different areas of mathematical physics. In particular, the system \eqref{eq} describes the propagation of solitons, which are special nontrivial solitary wave solutions $\Phi_j(x,t)=u_j(x)e^{-\mathrm{i}\la_j t}$ to a system of time-dependent Schr\"odinger equations of the form
	\begin{equation}\label{eq:SchrodTime}
		\mathrm{i}\frac{\partial \Phi_j}{\partial t}
		-\Delta \Phi_j = g_j(\Phi)\quad\hbox{for }j=1,\dots,K,
	\end{equation}
	where, for instance,  $g_j$ are responsible for the nonlinear polarization in a photonic crystal \cite{NonlinearPhotonicCrystals,Akozbek} and $\la_j$ are the external electric potentials.
	
	Another field of application is condensed matter
	physics, where \eqref{eq} comes from the system of coupled
	Gross-Pitaevski equations \eqref{eq:SchrodTime} with nonlinearities of the form
	$$g_j(\Phi)=\left(\sum_{k=1}^K\beta_{j,k}|\Phi_k|^2\right)\Phi_j\quad\hbox{for }j=1,\dots,K.$$ 
	The following $L^2$-bounds for $\Phi$ will be studied:
	\[
	\irn\left|\Phi_j(t,x)\right|^2\,dx=\rho_j^2 \quad \text{ and } \quad \irn\left|\Phi_j(t,x)\right|^2\,dx\le\rho_j^2.
	\]
	Problems with prescribed masses $\rho_j^2$ (the former constraint) appear in nonlinear optics, where the mass represents the power supply, and in the theory of Bose--Einstein condensates, where it represents the total number of atoms (see \cite{Akhmediev,Esry,Frantzeskakis,LSSY,Malomed,PiSt,Timmermans}). Prescribing the masses make sense also because they are conserved quantities in the corresponding evolution equation \eqref{eq:SchrodTime} together with the energy (see the functional $J$ below), cf. \cite{CazeLions,Cazenave:book}. As for the latter constraint, we propose it as a model for some experimental 
	situations, e.g. when the power supply provided can oscillate without exceeding a given value.
	
	Recall that a general class of autonomous systems of Schr\"odinger equations was studied by Brezis and Lieb in \cite{BrezisLiebCMP84} and using a constrained minimization method they showed the existence of a {\em least energy solution}, i.e., a nontrivial solution with the minimal energy. Their method using rescaling arguments does not apply with the $L^2$-bounds.
	
	Our aim is to provide a general class of nonlinearities and to find solutions to the nonlinear Schr\"odinger problems
	\begin{equation}\label{eq:}
		\left\{ \begin{array}{l}
			-\Delta u_i + \lambda_i u_i = \partial_iG(u) \quad \mathrm{in} \ \R^N, \ N \geq 3, \\
			u_i \in H^1(\R^N), \\
			\int_{\R^N} |u_i|^2 \, dx \le \rho_i^2
		\end{array} \right. \text{for every }i\in\{1,\dots,K\}
	\end{equation}
	and
	\begin{equation}\label{eq:rho}
		\left\{ \begin{array}{l}
			-\Delta u_i + \lambda_i u_i = \partial_iG(u) \quad \mathrm{in} \ \R^N, \ N \geq 3, \\
			u_i \in H^1(\R^N), \\
			\int_{\R^N} |u_i|^2 \, dx = \rho_i^2
		\end{array} \right. \text{for every }i\in\{1,\dots,K\},
	\end{equation}
	where $\rho=(\rho_1,\dots,\rho_K)\in(0,\infty)^K$ is prescribed and $(\lambda,u) \in\R^K\times H^1(\R^N)^K$ is the unknown.

	Let us introduce the sets
	\begin{equation*}\begin{split}
			\cD & := \left\{ u \in H^1(\R^N)^K \ : \ \int_{\R^N} |u_i|^2 \, dx \leq \rho_i^2 \text{ for every } i\in\{1,\dots,K\} \right\},\\
			\cS & := \left\{ u \in H^1(\R^N)^K \ : \ \int_{\R^N} |u_i|^2 \, dx = \rho_i^2 \text{ for every } i\in\{1,\dots,K\} \right\}
	\end{split}\end{equation*}
	and note that $\cS\subset\partial\cD$.
	
	We shall provide suitable assumptions under which the solutions to \eqref{eq:} (resp. \eqref{eq:rho}) are critical points of the energy functional $J\colon H^1(\R^N)^K \rightarrow \R$ defined as
	$$
	J(u) := \frac12 \int_{\R^N} |\nabla u|^2 \, dx - \int_{\R^N} G(u) \, dx
	$$
	restricted to the constraint $\cD$ (resp. $\cS$) with Lagrange multipliers $\lambda_i \in \R$, i.e., they are critical points of
	$$
	H^1(\R^N)^K \ni u \mapsto J(u) + \frac12 \sum_{i=1}^K \lambda_i \int_{\R^N} |u_i|^2 \, dx \in \R
	$$
	for some $\lambda=(\lambda_1,\dots,\lambda_K) \in \R^K$. Let us recall that, under mild assumptions on $G$, see \cite[Theorem 2.3]{BrezisLiebCMP84}, every critical point of the functional above belongs to $W^{2,q}_\textup{loc}(\R^N)^K$ for all $q < \infty$ and satisfies the Poho\v{z}aev \cite{BerLions1,NonradMed,Jeanjean,Shatah}
	$$
	\int_{\R^N} |\nabla u|^2 \, dx = 2^* \int_{\R^N} G(u) - \frac12 \sum_{i=1}^K \lambda_i |u_i|^2 \, dx
	$$
	and Nehari
	$$
	J'(u)(u)+\sum_{i=1}^K\lambda_i \int_{\R^N} |u_i|^2 \, dx = 0
	$$
	identities. By a linear combination of the two equalities above it is easily checked that every solution satisfies
	$$
	M(u):= \int_{\R^N} |\nabla u|^2 \, dx - \frac{N}{2} \int_{\R^N} H(u) \, dx = 0,
	$$
	where $H(u):= \langle g(u),u\rangle-2G(u)$ ($\langle\cdot,\cdot\rangle$ is the scalar product in $\R^K$) and $g:=\nabla G$, see e.g. \cite{Jeanjean}. Hence we introduce the constraint
	$$
	\cM := \left\{ u \in H^1(\R^N)^K \setminus \{0\}:  M(u)=0 \right\},
	$$
	which contains all the nontrivial solutions to \eqref{eq:} or \eqref{eq:rho} and does not depend on $\la$. Observe that every nontrivial solution to \eqref{eq:} belongs to $\cM\cap\cD$ and every (nontrivial) solution to \eqref{eq:rho} belongs to $\cM \cap \cS\subset\cM \cap \cD$. By a {\em ground state solution} to \eqref{eq:} we mean a nontrivial solution which minimizes $J$ among all the nontrivial solutions. In particular, if $(\lambda,u)$ solves \eqref{eq:} and $J(u)=\inf_{\cM\cap\cD}J$, then $(\lambda,u)$ is a ground state solution (cf. Theorems \ref{th:main1} and \ref{th:main2}). By a {\em ground state solution} to \eqref{eq:rho} we mean that $(\lambda,u)$ solves \eqref{eq:rho} and $J(u)=\inf_{\cM\cap\cD}J$ (cf. Theorems \ref{th:main2}, \ref{th:main3}, and Corollary \ref{Cor}). Note that this is more than just requiring $J(u)=\inf_{\cM\cap\cS}J$, which, on the other hand, appears as a more ``natural'' requirement.
	
	Working with the set $\cD$ instead of the set $\cS$ for a system of Schr\"odinger equations seems to be new and has, among others, a specific advantage related to the sign of the Lagrange multipliers $\la_i$. We begin by showing why this issue is important. First of all, from a physical point of view there are situations, e.g. concerning the eigenvalues of equations describing the behaviour of ideal gases, where the chemical potentials $\la_i$ have to be positive, see e.g. \cite{LSSY,PiSt}. In addition, from a mathematical point of view the (strict) positivity of such Lagrange multipliers often plays an important role in the strong convergence of minimizing sequences in $L^2(\rn)$, see e.g. \cite[Lemma 3.9]{BarJeaSoa}; finally, the nonnegativity is used in some of the proofs below, e.g. the one of Lemma \ref{lem:minons} (a). The aforementioned advantage is as follows: in \cite{Clarke}, Clarke proved that, in a minimization problem, Lagrange multipliers related to a constraint given by inequalities have a sign, i.e., $\lambda_i\ge0$; therefore it is enough to rule out the case $\la_i=0$ in order to prove that $\lambda_i>0$ for every $i\in\{1,\dots,K\}$; note that ruling out the case $\la_i=0$ is simpler than ruling out the case $\la_i\le0$, cf. the proof of Lemma \ref{lem:minons} (b). The nonnegativity/positivity of the Lagrange multipliers of \eqref{eq:rho} has often been obtained by means of involved tools (or at the very minimum in a not-so-straightforward way), such as stronger variants of Palais-Smale sequences in the spirit of \cite{Jeanjean} as in \cite[Lemma 3.6, proof of Theorem 1.1]{BarJeaSoa} or preliminary properties of the ground state energy map $\rho\mapsto\inf_{\cM\cap\cS}J$ as in \cite[Lemma 2.1, proof of Lemma 4.5]{LiZou}. Our argument, based on \cite{Clarke}, is simple, does not seem to be exploited in the theory of normalized solutions, and is demonstrated in Proposition \ref{Prop:Lag} in an abstract way for future applications, e.g. for different operators in the normalized solutions setting like the fractional Laplacian \cite{LuoZhang,LiHeXuYang}.
	
	A second, but not less important, advantage of considering the set $\cD$ concerns the property that the ground state energy in the Sobolev-critical case is below the ground state energy of the limiting problem, cf. \eqref{eq:groundlevel}. More precisely, since in dimension $N\in\{3,4\}$ the Aubin--Talenti instantone is not $L^2$-integrable, we need to truncate it by a cut-off function and then project it into $\cD$; however, unless $K=1$, we cannot ensure that such a projection lies on $\cS$, hence the use of $\cD$ is necessary for this argument. See the proof of Proposition \ref{lem:cless} \textit{(ii)} for further details.
	
	Recall that, when $K=1$ and
	\begin{equation}\label{Ex:powernon}
		G(u)=\frac{1}{p}|u|^{p}, \quad 2<p<2^*,\;p\ne2_N:=2+\frac{4}{N},
	\end{equation}
	\eqref{eq:rho} is equivalent to the corresponding problem with fixed $\lambda > 0$ (and without the $L^2$-bound) via a scaling-type argument. This approach fails in the case of nonhomogeneous nonlinearities or when $K\ge2$.
	In the $L^2$-subcritical case, i.e., when $G(u)\sim|u|^{p}$ with $2 < p < 2_N$, one can obtain the existence of a global minimizer by minimizing directly on $\cS$, cf. \cite{Stuart,Lions84}. In the $L^2$-critical ($p = 2_N$) and the $L^2$-supercritical and Sobolev-subcritical ($2_N < p < 2^*:=\frac{2N}{N-2}$) cases this method does not work; in particular, if $p>2_N$ in \eqref{Ex:powernon}, then $\inf_\cS J=-\infty$. The purpose of this work is to find general growth conditions on $G$ in the spirit of Berestycki, Lions \cite{BerLions1} and Brezis, Lieb \cite{BrezisLiebCMP84} as well as involving the Sobolev critical terms, and to provide a direct approach to obtain ground state solutions to \eqref{eq:}, \eqref{eq:rho}, and similar elliptic problems. 
	The problem \eqref{eq:rho} for one equation was studied by Jeanjean \cite{Jeanjean} and by Bartsch and Soave \cite{BartschSoaveJFA, BartschSoaveJFACorr} with a general nonlinear term satisfying the following condition of Ambrosetti-Rabinowitz type: there exist $\frac4N<a\le b<2^*-2$ such that
	\begin{equation}\label{eq:AR}
		0 < aG(u)\leq H(u)\leq b G(u)\hbox{ for }u \in\R \setminus \{0\}.
	\end{equation}
	In \cite{Jeanjean} the author used a mountain pass argument, while in \cite{BartschSoaveJFA, BartschSoaveJFACorr} a mini-max approach in $\cM$ based on the $\sigma$-homotopy stable family of compact subsets of $\cM$ and the Ghoussoub minimax principle \cite{Ghoussoub} were adopted.
	The same topological principle has been recently applied to the system \eqref{eq:rho} with particular power-like nonlinearities, e.g. in  \cite{BartschSoaveJFA,BartschSoaveJFACorr,BarJeaSoa,BarJea}, and by Jeanjean and Lu \cite{JeanjeanLuNorm} for $K=1$ and a general nonlinearity without \eqref{eq:AR}, but with $L^2$-supercritical growth.

	We stress that the lack of compactness of the embedding $H_\textup{rad}^1(\R^N)\subset L^2(\R^N)$ causes troubles in the analysis of $L^2$-supercritical problems and makes the argument quite involved, see e.g. \cite{Jeanjean,BartschSoaveJFA,BartschSoaveJFACorr}. A possible strategy to recover the compactness of Palais-Smale sequences, at least when $K=1$, is to show that the ground state energy map is nonincreasing with respect to $\rho>0$ and decreasing in a subinterval of $(0,\infty)$, see e.g. \cite{BellazziniJeanjean,JeanjeanLuNorm}. 
	
	In our approach we do not work in $H_\textup{rad}^1$, with Palais-Smale sequences, or with \eqref{eq:AR}, nor the monotonicity of the ground state energy map is required, so that we avoid the mini-max approach in $\cM$ involving a technical topological argument based on \cite{Ghoussoub}, which has been recently intensively exploited by many authors e.g.  in \cite{BartschSoaveJFA,BartschSoaveJFACorr,JeanjeanLuNorm,BarJea,Soave,BarJeaSoa,LiZou,LiHeXuYang,LuoZhang,Soave_crit}.
	
	In particular, we work with a weaker version of \eqref{eq:AR}, see the condition (A5) below, and we admit $L^2$-critical growth at $0$. We make use of a minimizing sequence of $J|_{\cM\cap\cD}$ and we are able to consider a wide class of nonlinearities $G$. In the first part of this work, we adapt the techniques of \cite{BiegMed} to the system \eqref{eq:} and the Sobolev-critical case, which ensure that the minimum of $J$ on $\cM\cap\cD$ is attained. If $G$ is even, we  exploit the Schwarz rearrangement $u^*:=(u_1^*,\dots,u_K^*)$ of $(|u_1|,\dots,|u_K|)$ because, if $u\in\cM\cap\cD$, then $u^*$ can be projected onto the same set without increasing the energy. Next, we point out that dealing with systems \eqref{eq:} and \eqref{eq:rho} one has to involve more tools in order to find a ground state $u\in \cM\cap\partial\cD$ and some additional restrictions imposed on $G$, $N$, or $K$ will be required. In particular, if we want to ensure that the Lagrange multipliers are positive and $u\in\cS$, we use the elliptic regularity results contained in \cite{BrezisLiebCMP84,BerLions1}, the Liouville type result \cite{Ikoma}, and Proposition \ref{Prop:Lag}.
	Finally, a multi-dimensional version of the strict monotonicity of the ground state energy map is simply obtained in Proposition \ref{Prop:GSE} as a consequence of our approach. 
	
	For $2<p\le2^*$, let $C_{N,p}>0$ be the optimal constant in the {\em Gagliardo-Nirenberg inequality}
	\begin{equation}\label{eq:GN}
		|u|_p \leq C_{N,p} |\nabla u|_2^{\delta_p} |u|_2^{1-\delta_p}\quad\hbox{for }u\in H^1(\R^N),
	\end{equation}
	where $\delta_p = N \big( \frac{1}{2} - \frac{1}{p} \big)$ and $\delta_pp>2$ (resp. $\delta_pp=2$, $\delta_pp<2$) if and only if $p>2_N$ (resp. $p=2_N$, $p<2_N$). Here and in what follows we denote by $|u|_k$ the $L^k$-norm of $u$, $1\le k\le\infty$.

We assume there exists $\theta\in(0,\infty)^K$ or $\theta=0$ such that $G$ is of the form
\[
G(u)=\wt G(u)+\frac1{2^*}\sum_{j=1}^K\theta_j|u_j|^{2^*}
\]
for some $\wt G\colon\R^K\to\rn$. We set $\wt g=\nabla G$, $\wt H(u)=\langle\wt g(u),u\rangle-2\wt G(u)$, $\wt h=\nabla \wt H$, $h:=\nabla H$, and consider the following assumptions:  
\begin{itemize}
	\item [(A0)] $\wt g$ and $\wt h$ are continuous and there exists $\tilde{c}>0$ such that $|\wt h(u)|\le\tilde{c}(|u|+|u|^{2^*-1})$.
	\item [(A1)] $\displaystyle\eta:=\limsup_{u\to0}\frac{\wt G(u)}{|u|^{2_N}}<\infty$.
	\item [(A2)] If $\theta=0$, then $\displaystyle\lim_{|u|\to\infty}\frac{\wt G(u)}{|u|^{2_N}}=\infty$; if $\theta\in(0,\infty)^K$, then $\displaystyle\liminf_{|u|\to\infty}\frac{\wt G(u)}{|u|^{2_N}}>0$.
	\item [(A3)] $\displaystyle\lim_{|u|\to\infty}\frac{\wt G(u)}{|u|^{2^*}}=0$.
	\item [(A4)] $\displaystyle2_N \wt H(u)\le\langle \wt h(u),u\rangle$.
	\item [(A5)] $\displaystyle\frac{4}{N} \wt G\le \wt H\le(2^*-2) \wt G$.
\end{itemize}
Of course, $\displaystyle\lim_{|u|\to\infty}\frac{G(u)}{|u|^{2_N}}=\infty$ if (A2) holds and $G,H$ satisfy (A1) -- resp. (A4), (A5) -- if so do $\wt G,\wt H$.	Note that (A5) implies $\wt G, \wt H\ge0$. Note also that $J$ and $M$ are of class $\cC^1$ if (A0) and (A5) are satisfied. For every $u\in H^1(\rn)^K$ such that $\irn H(u)\,dx>0$ we define
	\[
	R:=R_u:=\sqrt{\frac{N\int_{\rn}H(u)\,dx}{2\int_{\rn}|\nabla u|^2\,dx}}>0
	\]
	and note that $u(R\cdot)\in\cM$. 

	Observe that in view of (A2) and (A5), $G(u)\ge\wt G(u)>0$ and $H(u)\ge\wt H(u)>0$ for $u\neq 0$. Indeed, take any $v\in\R^K$ such that $|v|=1$ and note that (A5) implies that
	\begin{equation*}\begin{split}
		 \wt G(v)t^{2^*} \geq \wt G(tv)&\geq \wt G(v)t^{2_N}\quad\hbox{if }t\geq 1,\\
		\wt G(v)t^{2_N} \geq \wt G(tv)&\geq \wt G(v)t^{2^*}\quad\hbox{ if }0<t\leq 1.
	\end{split}\end{equation*}
Since (A2) holds, we get  $\wt G(tv)>0$ for sufficiently large $t>0$, hence taking into account the above inequalities we obtain that $\wt G(tv)>0$ for all $t>0$ and we conclude. In particular, $\cM\neq\emptyset$. Moreover, $\cM$ is a $\cC^1$-manifold, since $M'(u)\neq 0$ for $u\in \cM$, cf. \cite{Shatah}. As a matter of fact, if $M'(u)=0$, then $u$ solves $-\Delta u = \frac{N}{4} h(u)$ and satisfies the Poho\v{z}aev identity $\int_{\R^N} |\nabla u|^2 \, dx = 2^* \frac{N}{4} \int_{\R^N} H(u) \, dx$. If $M(u)=0$, then we infer $u=0$. 
	
	We introduce the following relation:
	\begin{itemize}
		\item[] Let $f_1,f_2\colon\R^K\to\R$. Then $f_1\preceq f_2$ if and only if $f_1\le f_2$ and for every $\eps>0$ there exists $u\in\R^K$, $|u|<\eps$, such that $f_1(u)<f_2(u)$,
	\end{itemize}
	and for better outcomes we need the following stronger variant of (A4): 
\begin{itemize}
	\item [(A4,$\preceq$)] $2_N\wt H(u)\preceq \langle\wt h(u),u\rangle$ if $\theta=0$.
\end{itemize}
Notice that (A4,$\preceq$) implies that	$2_N H(u)\preceq \langle h(u),u\rangle$.

From now on we assume the following condition
\begin{equation}\label{eq:eta2}
2^*C_{N,2_N}^{2_N}\eta|\rho|^{4/N}<1,
\end{equation}
and the first main result concerning \eqref{eq:} reads as follows.
	
	\begin{Th}\label{th:main1}
		Suppose (A0)--(A5) and \eqref{eq:eta2} hold and, if $\theta\in(0,\infty)^K$, 
	\begin{equation}\label{eq:groundlevel}
	\inf_{\M\cap\D}J< \frac{1}{N}S^{N/2}\sum_{i=1}^K\theta_i^{1-N/2}.
	\end{equation}	
		(a) There exists $u\in\cM\cap\cD$ such that
		$J(u)=\inf_{\M\cap\D}J$. In addition, $u$ is a $K$-tuple of radial, nonnegative and radially nonincreasing functions provided that $G$ is of the form
		\begin{equation}\label{eq:Gsp}
			G(u)=\sum_{i=1}^KG_i(u_i)+\sum_{j=1}^L\beta_j\prod_{i=1}^K|u_i|^{r_{i,j}},
		\end{equation}
		where $L\geq 1$, $G_i\colon\R\to[0,\infty)$ is even, $r_{i,j}>1$ or $r_{i,j}=0$, $\beta_j\geq 0$, $2_N\le\sum_{i=1}^Kr_{i,j}<2^*$, and for every $j$ there exists $i_1\neq i_2$ such that $r_{i_1,j}>1$ and $r_{i_2,j}>1$.\\
		(b) If, moreover, (A4,$\preceq$) holds, then $u$ is of class $\cC^2$ and there exists $\la=(\la_1,\dots.\la_K)\in[0,\infty)^K$ such that $(\la,u)$ is a ground state solution to \eqref{eq:}.
	\end{Th}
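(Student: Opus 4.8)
\emph{Strategy.} The plan is to obtain part (a) by the direct method on the constraint $\cM\cap\cD$ and then, in part (b), to promote the constrained minimizer to a genuine solution of \eqref{eq:} by analysing its Lagrange multipliers. I would first record the pointwise identity, valid on all of $\R^K$,
\[
\tfrac N4 H(u)-G(u)=\big(\tfrac N4\wt H(u)-\wt G(u)\big)+\tfrac1N\sum_{j=1}^K\theta_j|u_j|^{2^*},
\]
whose two summands are nonnegative by (A5). Integrating and using $|\nabla u|_2^2=\frac N2\int_{\rn}H(u)\,dx$ on $\cM$ gives $J(u)=\int_{\rn}\big(\frac N4 H(u)-G(u)\big)\,dx\ge\frac1N\sum_j\theta_j|u_j|_{2^*}^{2^*}\ge0$, so that $c:=\inf_{\cM\cap\cD}J$ is well defined. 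Given a minimizing sequence $(u^n)\subset\cM\cap\cD$, the first genuine step is boundedness in $H^1(\rn)^K$: the $L^2$-norms are controlled by $\rho$ since $u^n\in\cD$, while for the gradient I would split $\int_{\rn}\wt H(u^n)$ over $\{|u^n|\le M\}$ and $\{|u^n|>M\}$. On the former, (A1) and (A5) give $\wt H(u^n)\lesssim|u^n|^2$, which is integrable thanks to the $L^2$-bound; on the latter, (A2) (if $\theta=0$) or the Sobolev-critical term (if $\theta\in(0,\infty)^K$) makes $\frac N4\wt H-\wt G$, respectively $\sum_j\theta_j|u^n_j|^{2^*}$, comparable to $\wt H$, so that $\int_{\rn}\wt H(u^n)$ and hence $|\nabla u^n|_2^2$ stay bounded.

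\emph{Nontriviality, symmetry, and passing to the limit.} I would then extract $u^n\rightharpoonup u$ and show $u\neq0$. Here assumption \eqref{eq:eta2} enters: through the Gagliardo--Nirenberg inequality \eqref{eq:GN} at $p=2_N$ and the $L^2$-bound it yields $c>0$, which excludes vanishing, since if $u=0$ then Lions' concentration lemma together with the Brezis--Lieb splitting would force the energy down to $0$. If $G$ has the special form \eqref{eq:Gsp} I would instead replace $u^n$ by the Schwarz rearrangement of $(|u^n_1|,\dots,|u^n_K|)$ and project it back onto $\cM$ via $v\mapsto v(R_v\cdot)$: rearrangement preserves the $L^2$-norms, does not increase $|\nabla u^n|_2$ (Pólya--Szegő) and does not decrease $\int_{\rn}G$ or $\int_{\rn}H$ (Hardy--Littlewood, using that each coupling term in \eqref{eq:Gsp} involves at least two nontrivial factors); hence $R_v\ge1$, the projected function remains in $\cM\cap\cD$ without larger energy, the minimizing sequence may be taken radial and radially nonincreasing, and the compact embedding $H^1_{\mathrm{rad}}\hookrightarrow L^p$ ($2<p<2^*$) produces a nontrivial limit at once. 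Finally, Brezis--Lieb applied to $\int_{\rn}G(u^n)$ and $\int_{\rn}H(u^n)$, with weak lower semicontinuity of the gradient, gives $M(u)\le0$; a projection argument then places $u$ in $\cM\cap\cD$ with $J(u)\le c$, whence $J(u)=c$, proving (a).

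\emph{The critical case.} I expect the main obstacle to be compactness when $\theta\in(0,\infty)^K$, since the embedding into $L^{2^*}$ is not compact and mass may escape into an Aubin--Talenti bubble. This is precisely the role of \eqref{eq:groundlevel}: writing $\int_{\rn}|u^n_j|^{2^*}=\int_{\rn}|u_j|^{2^*}+\int_{\rn}|u^n_j-u_j|^{2^*}+o(1)$ and bounding the residual bubble from below by $\frac1N S^{N/2}\theta_j^{1-N/2}$, the strict inequality $c<\frac1N S^{N/2}\sum_i\theta_i^{1-N/2}$ forces the bubble part to vanish, so $u^n\to u$ strongly in $L^{2^*}$ and the infimum is attained. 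This is the heart of the argument and the most technical point.

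\emph{Ground state and multipliers (part (b)).} The minimizer $u$ minimizes $J$ on $\cM\cap\cD$, so there are a multiplier $\sigma$ for the equality constraint $M=0$ and, by Clarke's theorem (Proposition \ref{Prop:Lag}, as exploited in Lemma \ref{lem:minons}), multipliers $\lambda_i\ge0$ for the inequality constraints with $\lambda_i(|u_i|_2^2-\rho_i^2)=0$; the Euler--Lagrange equation reads
\[
(1-2\sigma)(-\Delta u_i)+\lambda_i u_i=g_i(u)-\tfrac{N\sigma}2 h_i(u).
\]
After excluding the degenerate case $\sigma=\tfrac12$, the function $u$ solves a Schrödinger system with the modified nonlinearity $g-\frac{N\sigma}2 h$ and thus satisfies the corresponding combined Poho\v zaev--Nehari identity; subtracting $M(u)=0$ yields $\frac{\sigma}{1-2\sigma}\int_{\rn}\big((N+2)H(u)-\frac N2\langle h(u),u\rangle\big)\,dx=0$. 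By (A4) the integrand is $\le0$ pointwise, and it is strictly negative on a set of positive measure --- thanks to (A4,$\preceq$) together with the continuity and decay of the nontrivial minimizer when $\theta=0$, and automatically thanks to the Sobolev-critical term when $\theta\in(0,\infty)^K$ --- so the integral is strictly negative and hence $\sigma=0$, i.e.\ $(\lambda,u)$ solves \eqref{eq:}. Elliptic regularity as in \cite{BrezisLiebCMP84,BerLions1} with the growth bound (A0) then bootstraps $u$ to $\cC^2$, and since every nontrivial solution of \eqref{eq:} lies in $\cM\cap\cD$ and $J(u)=c=\inf_{\cM\cap\cD}J$, the pair $(\lambda,u)$ is a ground state solution.
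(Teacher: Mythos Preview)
Your part (b) is essentially correct and matches the paper: Proposition~\ref{Prop:Lag} produces $\lambda_i\ge0$ and a multiplier $\sigma$; one excludes $\sigma=\tfrac12$ via the Nehari identity and (A4,$\preceq$), (A5), and then Nehari plus Poho\v zaev combined with $M(u)=0$ yield $\sigma\irn\bigl(\langle h(u),u\rangle-2_N H(u)\bigr)\,dx=0$, whence $\sigma=0$ and regularity follows from \cite{BrezisLiebCMP84,BerLions1}.

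Part (a), however, has two genuine gaps. First, your coercivity argument fails when $\theta=0$: the claim that (A2) makes $\tfrac N4\wt H-\wt G$ comparable to $\wt H$ for large $|u|$ is not a consequence of the assumptions. For $K=1$, $N\in\{3,4\}$, and $\wt G(u)=\tfrac1{2_N}|u|^{2_N}\ln(1+|u|)$, all of (A0)--(A5) hold, yet a direct computation gives $(\tfrac N4\wt H-\wt G)/\wt H\sim c/\ln|u|\to0$. The paper's Lemma~\ref{lem:coerc} does not split pointwise; it rescales $w^{(n)}:=|\nabla u^{(n)}|_2^{-1}\star u^{(n)}$ so that $|\nabla w^{(n)}|_2=1$, uses Lions' lemma to force $w^{(n)}\to0$ in $L^{2_N}$ (a nontrivial profile would make $J(u^{(n)})/|\nabla u^{(n)}|_2^2\to-\infty$ by (A2)), and then exploits $J(u^{(n)})\ge J(s\star w^{(n)})\to s^2/2$ for every $s>0$ (Lemma~\ref{lem:phi}) to conclude $J(u^{(n)})\to\infty$.

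Second, after translating so that $u^{(n)}\rightharpoonup u\ne0$, Brezis--Lieb and weak lower semicontinuity give only $M(u)\le\tfrac N2\liminf_n\irn H(v^{(n)})\,dx$ with $v^{(n)}:=u^{(n)}-u$, and the right-hand side is $\ge0$; so your claim ``$M(u)\le0$'' is unjustified, and the projection with $R\ge1$ is not available. The paper's Lemma~\ref{lem:infismin} instead proves a dichotomy (its Claim~2): either $u^{(n)}\to u$ in $\cD^{1,2}(\rn)^K$, or $M(u)<0$. To get it one assumes $M(u)\ge0$ and $v^{(n)}\not\to0$ in $\cD^{1,2}$, observes that then $M(v^{(n)})\le o(1)$, projects the \emph{residual} $v^{(n)}$ onto $\cM$ (which stays in $\cD$), and shows this forces either $R_n\to1$ and then a contradiction with the threshold \eqref{eq:groundlevel}, or directly a contradiction when $\theta=0$. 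The case $M(u)<0$ is then excluded by projecting $u$ with $R>1$ and obtaining $c<c$ via Fatou. This yields strong $\cD^{1,2}$-convergence and hence $u\in\cM\cap\cD$ with $J(u)=c$. Your symmetrise-first route could recover $M(u)\le0$ in the subcase $\theta=0$ with $G$ of the form \eqref{eq:Gsp} (radial compactness gives $\irn H(u^{(n)})\to\irn H(u)$), but the first assertion of (a) is for \emph{general} $G$, and even in the special case the critical part $\theta\in(0,\infty)^K$ still needs the residual analysis above; the paper in fact symmetrises only \emph{after} the minimizer is found (Lemma~\ref{lem:radmin}).
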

As we shall see in Section \ref{section:proof}, \eqref{eq:groundlevel} is verified if $N\ge5$ or if $N\in\{3,4\}$ and an additional mild condition holds, see Proposition \ref{lem:cless} (see also Lemma \ref{lem:clessR}). We point out that part \textit{(b)} holds regardless of whether $G$ is of the form \eqref{eq:Gsp} or not. If this is the case, then $u$ has the additional properties as in part \textit{(a)}. 
	
	Notice that (A1) allows $G$ to have $L^{2}$-critical growth $G(u)\sim|u|^{2_N}$ at $0$, but (A2) excludes the same behaviour at infinity. Moreover, $\wt G$ consists of the Sobolev-subcritical part in view of (A3). Finally, the {\em pure $L^2$-critical} case for $|u|$ small is ruled out by (A4,$\preceq$), i.e., $G(u)=\wt G(u)$ cannot be of the form  \eqref{eq:Gsp} with $G_i(u)=\alpha_i |u|^{2_N}$, $\alpha_i \geq 0$, and $\sum_{i=1}^Kr_{i,j}=2_N$ for every $j$. 
	
	Here and later on, when we say $G$ is of the form \eqref{eq:Gsp}, we also mean the additional conditions on $G_i$, $\beta_j$, and $r_{i,j}$ listed in Theorem \ref{th:main1} (a). Observe that $G$ of the form \eqref{eq:Gsp} satisfies (A4) if and only if $G_i$ satisfies the scalar variant of (A4) for all $i\in \{1,\ldots,K\}$. If, in addition, $G_i$ satisfies (A4,$\preceq$) for some $i$, then $G$ satisfies (A4,$\preceq$) as well.

	More can be said if  $N\in\{3,4\}$.

	\begin{Th}\label{th:main2}
		Assume that (A0)--(A3), (A4,$\preceq$), (A5), and \eqref{eq:eta2} are satisfied, $G$ is of the form \eqref{eq:Gsp}, $N\in\{3,4\}$, and \eqref{eq:groundlevel} holds if $\theta\in(0,\infty)^K$. Then there exist $u\in\cM\cap\partial\cD$ of class $\cC^2$ and $\la=(\la_1,\dots,\la_K)\in[0,\infty)^K$ such that $(\la,u)$ is a ground state solution to \eqref{eq:}. In addition, each $u_i$ is radial, nonnegative, and radially nonincreasing.
		Moreover, for every $i\in\{1,\dots,K\}$ either $u_i=0$ or $\int_{\R^N}|u_i|^2\,dx=\rho_i^2$ and, if $u_i\ne0$, then $\la_i>0$ and $u_i>0$. In particular, if $u\in\cS$, then $\lambda\in(0,\infty)^K$ and $(\lambda,u)$ is a ground state solution to \eqref{eq:rho}. 
	\end{Th}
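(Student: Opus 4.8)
The plan is to build on Theorem \ref{th:main1}, whose hypotheses are all subsumed here, to obtain a minimizer $u\in\cM\cap\cD$ of $J$; the bulk of the work will be upgrading the containment from $\cD$ to $\partial\cD$ and extracting the sign information on the Lagrange multipliers. First I would invoke Theorem \ref{th:main1} (a) to get $u\in\cM\cap\cD$ with $J(u)=\inf_{\cM\cap\cD}J$ which, since $G$ is of the form \eqref{eq:Gsp}, is a $K$-tuple of radial, nonnegative, radially nonincreasing functions, and by part (b) (available because (A4,$\preceq$) holds) $u$ is of class $\cC^2$ and there is $\la\in[0,\infty)^K$ making $(\la,u)$ a ground state solution to \eqref{eq:}. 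The nonnegativity of $\la$ is exactly the output of the Clarke-type result encapsulated in Proposition \ref{Prop:Lag}, since the constraints defining $\cD$ are inequalities.

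Next I would show $u\in\partial\cD$, i.e.\ that at least one mass constraint is saturated, and more precisely the componentwise dichotomy ``$u_i=0$ or $|u_i|_2^2=\rho_i^2$''. The key mechanism is that if some component had $0<|u_i|_2^2<\rho_i^2$ strictly in the interior, then the corresponding multiplier must vanish, $\la_i=0$, because the inequality constraint is inactive; the restriction to $N\in\{3,4\}$ and the Sobolev-critical structure then force a contradiction. Concretely, for a component with $\la_i=0$ and $u_i\ne0$, the equation $-\Delta u_i=\partial_iG(u)$ together with (A2) (at least $L^2$-critical growth, so $\partial_iG$ does not decay too fast) and the nonnegativity of $u$ should violate a Liouville-type nonexistence statement; this is precisely where the cited Liouville result \cite{Ikoma} enters. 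Thus every nonzero component satisfies $\la_i>0$ and, consequently, $|u_i|_2^2=\rho_i^2$, so that $u\in\partial\cD$ and $J(u)=\inf_{\cM\cap\cD}J$ is still attained there.

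Having $\la_i>0$ and $u_i\ge0$, $u_i\not\equiv0$ solving $-\Delta u_i+\la_i u_i=\partial_iG(u)\ge0$, the strong maximum principle upgrades nonnegativity to strict positivity $u_i>0$; elliptic regularity from \cite{BrezisLiebCMP84,BerLions1} gives the $\cC^2$ smoothness already recorded. The final ``in particular'' clause is then immediate: if all components are nonzero, i.e.\ $u\in\cS$, then every $\la_i>0$, so $\la\in(0,\infty)^K$, and since $u$ solves \eqref{eq:} with all mass constraints saturated it solves \eqref{eq:rho}; moreover $J(u)=\inf_{\cM\cap\cD}J$ means it is a ground state solution to \eqref{eq:rho} in the strong sense defined in the introduction.

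The main obstacle I anticipate is the step ruling out an interior nonzero component, i.e.\ converting $\la_i=0$ into a contradiction. Simply invoking a Liouville theorem is delicate because the equation for $u_i$ is coupled through $\partial_iG(u)$, which depends on the other components; one must ensure that $\partial_iG(u)$ is sign-definite and has the right growth so that \cite{Ikoma} applies to the single component $u_i$. The special product-plus-sum structure \eqref{eq:Gsp} with the requirement that each coupling monomial involves at least two genuinely present indices is what should make this tractable, since it controls how $\partial_iG$ behaves when $u_i$ is small; carefully matching these growth conditions to the hypotheses of the Liouville result is the technically sensitive point.
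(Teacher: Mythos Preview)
Your approach is essentially the paper's: invoke Theorem \ref{th:main1} for the minimizer and the multipliers, then use Lemma \ref{lem:minons}(b) for the dichotomy and the strong maximum principle for positivity. One correction to your anticipated obstacle: the ingredient that makes the Liouville step work is not (A2) but (A5), which forces each $s\mapsto G_i(s)/s^{2_N}$ to be nondecreasing on $(0,\infty)$, hence $g_i\ge0$ there; together with $\beta_j\ge0$ and $u\ge0$ this gives $\partial_iG(u)\ge0$, so $-\Delta u_i\ge0$, and then \cite[Lemma A.2]{Ikoma} applies because $u_i\in L^{N/(N-2)}(\R^N)$ precisely when $N\in\{3,4\}$. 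The paper also includes a separate argument for $u\in\partial\cD$ (if all $\la_i=0$, the combined Nehari--Poho\v{z}aev identity forces $\langle g(u),u\rangle=2^*G(u)$ pointwise, which via \eqref{eq:Gsp} makes $u_i$ an Aubin--Talenti instanton, not in $L^2$), but as your outline implicitly recognizes, this is subsumed by applying the Liouville argument to every component.
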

	
	Note that the obtained ground state solution  $u$ belongs to $\partial\cD$, i.e., at least one of the $L^2$-bounds must be the equality $\int_{\R^N} |u_i|^2 \, dx=\rho_i^2$. In particular, ground states solutions can be semitrivial. 
	
	If $K=2$, $L=1$, and the coefficient of the coupling term is large, then we find ground state solutions to \eqref{eq:rho}.

	\begin{Th}\label{th:main3}
		Assume that (A0)--(A3), (A4,$\preceq$), (A5), and \eqref{eq:eta2} are satisfied, $N\in\{3,4\}$, $K=2$, $L=1$, and \eqref{eq:groundlevel} holds if $\theta\in(0,\infty)$. If $G$ is of the form \eqref{eq:Gsp} and $r_{1,1}+r_{2,1}>2_N$, then for every sufficiently large $\beta_1>0$ there exists a ground state solution $(\lambda,u)\in(0,\infty)^2\times\cS$ to \eqref{eq:rho}. Moreover, each component of $u$ is positive, radial, radially nonincreasing and of class $\cC^2$. 
	\end{Th}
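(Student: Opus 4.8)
The plan is to derive Theorem \ref{th:main3} from Theorem \ref{th:main2} by proving that, once $\beta_1$ is large, the ground state produced there cannot be \emph{semitrivial} (have a vanishing component). I indicate the dependence on the coupling coefficient by writing $c(\beta_1):=\inf_{\cM\cap\cD}J$ and set $r:=r_{1,1}+r_{2,1}>2_N$. By Theorem \ref{th:main2} there are $u\in\cM\cap\partial\cD$ of class $\cC^2$ and $\la\in[0,\infty)^2$ such that $(\la,u)$ is a ground state solution to \eqref{eq:}, each $u_i$ is radial, nonnegative and radially nonincreasing, and for each $i$ either $u_i=0$ or $|u_i|_2^2=\rho_i^2$; moreover, if $u\in\cS$ then $\la\in(0,\infty)^2$ and $(\la,u)$ solves \eqref{eq:rho}. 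Since $r_{1,1},r_{2,1}>1$, the coupling term vanishes as soon as one component does; hence it suffices to show that for all large $\beta_1$ the minimizer $u$ has no vanishing component, because then $|u_i|_2^2=\rho_i^2$ for $i\in\{1,2\}$, i.e.\ $u\in\cS$, and every remaining assertion is provided by Theorem \ref{th:main2}.

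Let $J_i,M_i,\cM_i,\cD_i$ be the scalar objects associated with $G_i$ and put $e_i:=\inf_{\cM_i\cap\cD_i}J_i$. Because the coupling carries the factors $|u_1|^{r_{1,1}}$ and $|u_2|^{r_{2,1}}$, one has $(v_1,0)\in\cM\cap\cD\iff v_1\in\cM_1\cap\cD_1$ (and symmetrically), with $J(v_1,0)=J_1(v_1)$. I argue by contradiction: suppose there is a sequence $\beta_1^{(n)}\to\infty$ for which the corresponding minimizer $u^{(n)}$ is semitrivial, and, passing to a subsequence, assume the same component always vanishes, say $u^{(n)}=(u_1^{(n)},0)$ with $u_1^{(n)}\ne0$. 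Then $u_1^{(n)}\in\cM_1\cap\cD_1$ and, testing $J$ against $(\phi,0)$ for $\phi\in\cM_1\cap\cD_1$, we obtain
\[
c(\beta_1^{(n)})\le e_1\le J_1(u_1^{(n)})=J(u^{(n)})=c(\beta_1^{(n)}),
\]
so $c(\beta_1^{(n)})=e_1$ for every $n$ and $u_1^{(n)}$ attains $e_1$. The proof is then reduced to the two incompatible facts $e_1>0$ and $c(\beta_1)\to0$.

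For the first, $M_1(u_1^{(n)})=0$ and (A5) give $e_1=J_1(u_1^{(n)})=\tfrac N4\irn\big(H_1(u_1^{(n)})-\tfrac4N G_1(u_1^{(n)})\big)\,dx\ge0$, and if this were $0$ then $H_1(u_1^{(n)})=\tfrac4N G_1(u_1^{(n)})$ a.e.; since $u_1^{(n)}$ is continuous and radially nonincreasing its range is an interval $[0,a]$, so $H_1=\tfrac4N G_1$ on $[0,a]$, which upon integrating the resulting identity $sG_1'(s)=2_N G_1(s)$ forces $G_1(s)=\alpha s^{2_N}$ there with $0<\alpha\le\eta$. Then $M_1(u_1^{(n)})=0$ reads $|\nabla u_1^{(n)}|_2^2=2\alpha|u_1^{(n)}|_{2_N}^{2_N}$, and \eqref{eq:GN} yields $1\le2\alpha C_{N,2_N}^{2_N}|u_1^{(n)}|_2^{4/N}\le2\eta C_{N,2_N}^{2_N}|\rho|^{4/N}<2/2^*<1$ by \eqref{eq:eta2}, a contradiction; hence $e_1>0$. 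For the decay, fix $v=(v_1,v_2)\in\cD$ with $v_1,v_2$ smooth, compactly supported, nonzero and with overlapping supports, so that $E:=\irn|v_1|^{r_{1,1}}|v_2|^{r_{2,1}}\,dx>0$. Writing $\irn H(v)\,dx=A+\beta_1(r-2)E$ and $\irn G(v)\,dx=C+\beta_1E$ with $A,C\ge0$ independent of $\beta_1$, the projection $w:=v(R_v\cdot)\in\cM$ is well defined and lies in $\cD$ once $R_v\ge1$ (true for large $\beta_1$, as $\irn H(v)\,dx\to\infty$), and, using $M(w)=0$ and the scaling of the norms,
\[
J(w)=R_v^{-N}\Big(\tfrac N4\irn H(v)\,dx-\irn G(v)\,dx\Big)=R_v^{-N}\Big(\tfrac N4A-C+\beta_1E\big(\tfrac N4(r-2)-1\big)\Big).
\]
Because $r>2_N$ gives $\tfrac N4(r-2)-1>0$, the parenthesis grows linearly in $\beta_1$ while $R_v^{-N}\sim(\mathrm{const}\cdot\beta_1)^{-N/2}$, so $J(w)=O(\beta_1^{1-N/2})\to0$ since $N\ge3$, whence $0\le c(\beta_1)\le J(w)\to0$. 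This contradicts $c(\beta_1^{(n)})=e_1>0$, so for all large $\beta_1$ the minimizer $u$ is not semitrivial, which completes the argument.

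I expect the main obstacle to be the decay $c(\beta_1)\to0$: one must project a \emph{fixed} two-bump competitor onto the $\beta_1$-dependent constraint $\cM$ while remaining inside $\cD$, and verify that the energy gain from the strongly coupled term dominates the loss caused by the dilation; this is exactly where the strict $L^2$-supercriticality $r>2_N$ and the dimension bound $N\ge3$ enter. A second delicate point is the strict positivity $e_1>0$, where \eqref{eq:eta2} is used to rule out the degenerate pure $L^2$-critical profile that would otherwise make the scalar level vanish.
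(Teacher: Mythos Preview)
Your proof is correct and follows the same overall strategy as the paper (Lemma~\ref{lem:minons2}): assume by contradiction that the ground state from Theorem~\ref{th:main2} is semitrivial, identify the level with the $\beta_1$-independent scalar level, and then exhibit a coupled competitor whose energy tends to $0$ as $\beta_1\to\infty$. The technical execution differs in two places. First, the paper projects its competitor onto $\cM$ via the $L^2$-preserving scaling $s\star w$ (so $w\in\cD$ automatically for every $\beta_1$), while you use the spatial dilation $v(R_v\cdot)$ and check $R_v\ge1$ only for large $\beta_1$; both yield the decay $J(\text{competitor})=O(\beta_1^{1-N/2})$ by the same balance $\frac{N}{4}(r-2)>1\Leftrightarrow r>2_N$. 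Second, the paper builds its competitor from the scalar minimizer $\bar v$ obtained via Corollary~\ref{Cor} (which implicitly requires checking the scalar hypotheses for $G_2$, including the scalar analogue of \eqref{eq:groundlevel}), whereas you take an arbitrary pair of overlapping bumps in $\cD$, which is more elementary and sidesteps that verification. Your direct argument for $e_1>0$---forcing $G_1(s)=\alpha s^{2_N}$ on the range of $u_1^{(n)}$ and contradicting \eqref{eq:eta2} via Gagliardo--Nirenberg---is likewise self-contained, in contrast to the paper's implicit appeal to Lemma~\ref{lem:bdaw2} for the scalar problem.
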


	Observe that, if in Theorem \ref{th:main3} $G_i(t)=\mu_i|t|^{p_i}/p_i$ for some $\mu_i>0$ and $p_i\in(2_N,2^*)$, $i\in\{1,2\}$, then clearly $\eta=0$ in \eqref{eq:eta2} and this result was very recently obtained by Li and Zou in \cite[Theorem 1.3]{LiZou}, again, unlike this paper, by means of the involved topological argument due to Ghoussoub \cite{Ghoussoub}, cf. \cite{BartschSoaveJFA,BartschSoaveJFACorr,JeanjeanLuNorm,BarJea,Soave,BarJeaSoa,LiHeXuYang,LuoZhang,Soave_crit}. If $\eta>0$ or $\theta\in(0,\infty)^K$, the result seems to be new and we obtain a ground state solution to \eqref{eq:rho} for sufficiently small $|\rho|$ in the former case, see \eqref{eq:eta2}, or under rather mild additional assumptions about $\wt G$ in the latter, see Proposition \ref{lem:cless}. Furthermore, to our knowledge, this is the first result about normalized solutions to a system of Schr\"odinger equations where the nonlinearity is rather general, in particular not (entirely) of power-type, e.g.
	\begin{equation}\label{Ex1}
	\wt G_i(u)=\frac{\mu_i}{p_i} |u_i|^{p_i}\ln(1+|u_i|),\,\quad p_i\in[2_N,2^*-1],\mu_i>0, i\in\{1,2\}
	\end{equation}
	as well as where the nonlinearity is the sum of power-type nonlinerites including the Sobolev critical terms of the form
	\begin{equation}\label{Ex2}
		G_i(u)=\frac{\nu_i}{2_N}|u_i|^{2_N}+\frac{\mu_i}{p_i}|u_i|^{p_i}+\frac{\theta_i}{2^*}|u_i|^{2^*},\quad p_i\in(2_N,2^*),\mu_i,\nu_i\ge0, \mu_i+\nu_i>0, i\in\{1,2\},
	\end{equation}
	where $\displaystyle\eta=\frac{\max\{\nu_1,\nu_2\}}{2_N}\ge0$.
	In view of Proposition \ref{lem:cless} (ii), taking $p=2_N$ or $p=2^*$ we easily check that \eqref{Ex1} and \eqref{Ex2} satisfy
	\eqref{eq:groundlevel} and we obtain a ground state solution to \eqref{eq:rho} for any $\mu_i+\nu_i>0$ and $\theta_i>0$, $i=1,2$. As for other possible examples of scalar functions $\wt G_1,\wt G_2$ we refer to (E1)--(E4) in \cite{BiegMed}. See also example \eqref{Ex33}.

	
	Moreover, if $K=1$ and $L=0$ (i.e., there is no coupling term), then we find ground state solutions to the scalar problem \eqref{eq:rho} taking into account a general nonlinearity involving at least $L^2$-critical and at most Sobolev-critical growth.
	
	\begin{Cor}\label{Cor}
		Assume that $K=1$, (A0)--(A3), (A4,$\preceq$), (A5), and \eqref{eq:eta2} are satisfied, and \eqref{eq:groundlevel} holds if $\theta\in(0,\infty)$. Assume as well that $H\preceq(2^*-2)G$ or that $N\in\{3,4\}$ and $G$ is even. Then there exist $u\in\cM\cap\cS$ of class $\cC^2$ and $\la\in(0,\infty)$ such that $(\la,u)$ is a ground state solution to \eqref{eq:rho}. If $G$ is even, then $u$ is radial, positive, and radially decreasing.
	\end{Cor}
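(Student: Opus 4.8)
The plan is to reduce everything to the results already proved for the general system and to exploit the special feature that for $K=1$ the ball $\cD$ satisfies $\partial\cD=\cS$, so that ``$u$ lies on $\partial\cD$'' and ``$u$ is normalized'' coincide. First I would apply Theorem \ref{th:main1} with $K=1$: under (A0)--(A5), \eqref{eq:eta2}, and (if $\theta\in(0,\infty)$) \eqref{eq:groundlevel}, part \textit{(a)} yields a minimizer $u\in\cM\cap\cD$ of $J$ with $J(u)=\inf_{\cM\cap\cD}J$, and part \textit{(b)} (using (A4,$\preceq$)) produces $\la\in[0,\infty)$ such that $(\la,u)$ is a ground state solution to \eqref{eq:} and $u\in\cC^2$. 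Since $\la\ge0$ always and $\la>0$ forces the mass constraint to be active, i.e. $u\in\partial\cD=\cS$, the whole statement reduces to proving $\la>0$: then $(\la,u)$ solves \eqref{eq:rho}, and because $J(u)=\inf_{\cM\cap\cD}J$ it is by definition a ground state solution to \eqref{eq:rho}.

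Under the hypothesis $H\preceq(2^*-2)G$ (Case (i)) I would argue by contradiction and suppose $\la=0$. Then $u$ is a \emph{free} critical point of $J$, i.e. a nontrivial solution of $-\Delta u=g(u)$, so it satisfies the Poho\v{z}aev identity $|\nabla u|_2^2=2^*\int_{\rn}G(u)\,dx$; combining this with $u\in\cM$, that is $|\nabla u|_2^2=\frac N2\int_{\rn}H(u)\,dx$, and using $2^*\cdot\frac{2}{N}=2^*-2$, I obtain
\[
\int_{\rn}\big[(2^*-2)G(u)-H(u)\big]\,dx=0 .
\]
By (A5) the integrand is nonnegative, hence $H(u(x))=(2^*-2)G(u(x))$ for a.e. $x$. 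Since $u\in\cC^2$ is a nontrivial solution tending to $0$ at infinity, its range is an interval with $0$ in its closure, while $S:=\{w\in\R:H(w)<(2^*-2)G(w)\}$ is open with $0\in\overline{S}$ by $H\preceq(2^*-2)G$; therefore $u^{-1}(S)$ has positive measure, contradicting the a.e. equality. Thus $\la>0$, and the conclusion follows as above. This argument is valid for every $N\ge3$ and requires no evenness of $G$.

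Under the alternative hypothesis $N\in\{3,4\}$ and $G$ even (Case (ii)) I would instead invoke the argument of Theorem \ref{th:main2}. For $K=1$ a $G$ of the form \eqref{eq:Gsp} is exactly an even $G=G_1$ with $L=0$ (the coupling conditions being vacuous), so the reasoning of Theorem \ref{th:main2} applies and yields $u\in\partial\cD$ with, for the single nonzero component, $\la>0$ and $u>0$; as $\partial\cD=\cS$, this gives $u\in\cS$ and a ground state solution $(\la,u)$ to \eqref{eq:rho}. Here the exclusion of $\la=0$ rests not on $H\preceq(2^*-2)G$ but on the dimension restriction (through \eqref{eq:groundlevel}) and the Liouville-type argument behind Theorem \ref{th:main2}, which is precisely why this case does not need the strict relation used in Case (i).

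Finally, in both cases $u\in\cC^2$ follows from the elliptic regularity of \cite{BrezisLiebCMP84,BerLions1}, and when $G$ is even the Schwarz rearrangement (already employed in Theorem \ref{th:main1}\,\textit{(a)}) together with the strong maximum principle gives that $u$ is radial, positive, and radially decreasing. The main obstacle I anticipate is the boundary attainment in Case (i): because at $\la=0$ the point $u$ is a free critical point, first-order variations of $J$ vanish and one cannot decrease the energy infinitesimally; the Poho\v{z}aev/$\cM$ identity circumvents this, but one must still ensure that the range of $u$ actually meets the open set $S$ where the inequality is strict. This is immediate once $u$ changes sign, or once $S$ is two-sided (in particular when $G$ is even), and in general it requires a short argument on the sign and decay of the minimizer.
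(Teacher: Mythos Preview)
Your overall strategy coincides with the paper's: start from Theorem~\ref{th:main1}\,(b), combine the Poho\v{z}aev identity with $u\in\cM$ to reach $\int_{\rn}\bigl[(2^*-2)G(u)-H(u)\bigr]\,dx=0$ when $\la=0$, and in Case~(ii) fall back on the Liouville-type reasoning behind Lemma~\ref{lem:minons}\,(b)/Theorem~\ref{th:main2}. The paper arranges the two conclusions in the opposite order (first $u\in\partial\cD$, then $\la>0$, via the equivalent identity $\la\frac{2}{N-2}|u|_2^2=\int_{\rn}2^*G(u)-g(u)u\,dx$), but this is only cosmetic.

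Two points need attention. First, the obstacle you flag in Case~(i)---whether the range of $u$ actually meets the open set $S=\{w:H(w)<(2^*-2)G(w)\}$---is exactly what Lemma~\ref{Lem:ineq} is for: it states that $f_1\preceq f_2$ is equivalent to $\int_{\rn}f_1(u)-f_2(u)\,dx<0$ for \emph{every} nonzero $u\in H^1(\rn)$. So you may simply cite it (this is how Remark~\ref{rem} and the paper's own proof proceed) rather than arguing by hand about signs and decay. Second, and more substantively, your derivation of ``radially decreasing'' is incomplete: Schwarz rearrangement together with the strong maximum principle give $u$ radial, positive, and radially \emph{nonincreasing}, but not strictly decreasing. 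The paper supplies a separate argument for strictness: if $u$ were constant on an annulus $\{\tau_1<|x|<\tau_2\}$, then $-\Delta u=0$ there forces $g(u)=\la u$ there; since (A1), (A5), and $\la>0$ yield $g(t)\le\la t$ for $t>0$ below this constant value, one obtains $-\Delta u\le0$ on $\{|x|>\tau_1\}$ while $u$ attains its maximum over this set in the interior, contradicting the maximum principle. You should add this step.
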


	Recently, Soave considered \eqref{Ex2} with $\theta_1=0$ in \cite{Soave} and with $\theta_1>0$ but $\nu_1=0$ or  $\mu_1=0$ in \cite{Soave_crit}, with, additionally, an upper bound on $\mu_1>0$ if $N\ge5$.
	In other recent papers, Wei and Wu \cite{WeiWu} considered \eqref{Ex2} with $\theta_1>0$, $\nu_1=0$, and no upper bound on $\mu_1$, while Alves, Ji, and Miyagaki \cite{Alvesetal} considered \eqref{Ex2} with $\theta_1>0$, $\nu_1=0$, and a lower bound on $\mu_1$. Corollary \ref{Cor} generalizes the results from \cite{Alvesetal,Soave_crit,WeiWu} both because no bound on $\mu_1$ is needed (upper or lower) and because the Sobolev-subcritical term $\wt G$ can be $L^2$-critical, $L^2$-supercritical, or even both, without the need of consisting of (sums of) power functions. Of course, Corollary \ref{Cor} also generalizes the results from \cite{BiegMed,JeanjeanLuNorm}, which do not deal with the Sobolev-critical case.

	Finally, observe that conditions (A0)--(A5) and (A4,$\preceq$) are positively additive, i.e., if $\wt G$ and $\wt G'$ satisfy the conditions with $\eta$ and $\eta'$ in (A1) respectively and $\alpha,\alpha'>0$, then $\alpha\wt G+ \alpha'\wt G'$ satisfy the corresponding conditions with $\displaystyle\limsup_{u\to0}\frac{\alpha\wt G+ \alpha'\wt G'}{|u|^{2_N}}\le\alpha\eta+\alpha'\eta'$.

	\section{The proof}\label{section:proof}

	\begin{Lem}\label{Lem:ineq}
		Let $f_1,f_2\in\cC(\R^K)$ and assume there exists $C>0$ such that $|f_1(u)|+|f_2(u)|\le C(|u|^2+|u|^{2^*})$ for every $u\in\R^K$. Then $f_1\preceq f_2$ if and only if $f_1\le f_2$ and
		\[
		\int_{\rn}f_1(u)-f_2(u)\,dx<0
		\]
		for every $u\in H^1(\rn)^K\setminus\{0\}$.
	\end{Lem}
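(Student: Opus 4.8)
The plan is to observe that both sides of the asserted equivalence include the pointwise inequality $f_1\le f_2$, so it suffices to prove, under the standing assumption $f_1\le f_2$, that the defining ``tail'' of $\preceq$ -- namely that for every $\eps>0$ there is $v\in\R^K$ with $|v|<\eps$ and $f_1(v)<f_2(v)$ -- is equivalent to the integral condition $\int_{\rn}\big(f_1(u)-f_2(u)\big)\,dx<0$ for every $u\in H^1(\rn)^K\setminus\{0\}$. It is convenient to set $w:=f_2-f_1$, so that $w$ is continuous, $w\ge0$, $w(0)=0$, and $0\le w(v)\le C\big(|v|^2+|v|^{2^*}\big)$; since $N\ge3$ the embedding $H^1(\rn)\hookrightarrow L^2(\rn)\cap L^{2^*}(\rn)$ makes $w(u)\in L^1(\rn)$, so the integral is finite and, because $w\ge0$, the integral condition is equivalent to demanding that $\{x:u(x)\notin Z\}$ have positive measure, where $Z:=\{v:f_1(v)=f_2(v)\}$ is closed. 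In these terms the tail of $\preceq$ says exactly that $0\in\overline{\R^K\setminus Z}$.

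For the ``if'' direction I argue by contraposition. If $0\notin\overline{\R^K\setminus Z}$, then $w\equiv0$ on some ball $B(0,r)$; picking a nonzero $\psi\in C_c^\infty(\rn)$ with $|\psi|_\infty<r$ and a unit vector $e\in\R^K$, the map $u:=\psi e\in H^1(\rn)^K\setminus\{0\}$ has range inside $B(0,r)\subseteq Z$, so $\int_{\rn}w(u)\,dx=0$ and the integral condition fails. This direction uses only the growth bound and is routine.

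The ``only if'' direction is the core. Fix $u\in H^1(\rn)^K\setminus\{0\}$ and suppose, for contradiction, that $\int_{\rn}w(u)\,dx=0$, i.e.\ $u(x)\in Z$ for a.e.\ $x$. I would first show that the essential range $R$ of $u$ is connected: any separation of $R$ into two relatively open pieces at positive distance yields, through a smooth cut-off on $\R^K$ and the chain rule for Sobolev maps, a function $\eta(u)\in W^{1,1}_{\mathrm{loc}}(\rn)$ valued in $\{0,1\}$ a.e.\ on the connected set $\rn$, hence constant -- contradicting that both pieces carry positive $u$-measure. Since $u\in H^1(\rn)^K$ forces the superlevel sets $\{|u|>t\}$ to have finite measure, one gets $0\in R$; and $u\ne0$ gives $R\ne\{0\}$. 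Thus $R$ is a nondegenerate connected subset of $Z$ containing the origin.

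I expect the last step -- turning the existence of such an $R$ into a contradiction with $0\in\overline{\R^K\setminus Z}$ -- to be the main obstacle. The difficulty is genuine: $0\in\overline{\R^K\setminus Z}$ only records that $\R^K\setminus Z$ accumulates at the origin, whereas a contradiction really needs the connected component of $0$ in $Z$ to be trivial, which is strictly stronger when $K\ge2$ (for instance $Z$ could contain a whole ray through $0$ while $\R^K\setminus Z$ still accumulates at $0$). Closing this gap is where the proof must do its real work: one has to exploit that the nondegenerate connected set $R\subseteq Z$ emanating from $0$ cannot coexist with the accumulation of $\R^K\setminus Z$ at $0$, and it is precisely here that I would look to use additional structure of $f_1,f_2$ (or to restrict the class of admissible test maps $u$) in order to rule out range-trapping.
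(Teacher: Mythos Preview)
Your ``if'' direction is correct. Your diagnosis of the ``only if'' direction is also correct --- in fact the gap you identify cannot be closed, because the lemma as written is false. Already for $K=1$ take $f_1\equiv 0$ and $f_2(t)=\bigl(\max(-t,0)\bigr)^2$: the growth bound holds with $C=1$, and $f_1\preceq f_2$ (for each $\eps>0$ choose $v=-\eps/2$), yet any nonnegative $u\in H^1(\rn)\setminus\{0\}$ gives $\int_{\rn}\bigl(f_1(u)-f_2(u)\bigr)\,dx=0$. For $K\ge 2$ your own ray example works verbatim: $f_1\equiv 0$, $f_2(u)=u_2^2$ satisfies $f_1\preceq f_2$, but $u=(\varphi,0)$ with $0\ne\varphi\in H^1(\rn)$ yields a zero integral. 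So the essential-range machinery you set up is not what fails; the target statement itself needs an extra hypothesis to be true (for instance that $\{f_1=f_2\}\cap B(0,r)=\{0\}$ for some $r>0$, which is strictly stronger than $0\in\overline{\{f_1<f_2\}}$; note that for $K=1$ evenness of $f_1,f_2$ would suffice, since then the essential-range interval together with symmetry forces a full neighbourhood of $0$ into $Z$, contradicting $0\in\overline{\R\setminus Z}$ --- but your $K\ge 2$ example is even, so evenness alone does not rescue the vector case).

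There is nothing substantive in the paper to compare against: its proof of this lemma is a one-line reference to the scalar case in \cite{BiegMed}. In the paper's actual uses the lemma is applied only to the specific pair $f_1=2_N H$, $f_2=\langle h(\cdot),\cdot\rangle$ under the structural assumptions (A0)--(A5), and there the desired strict integral inequality may well hold thanks to that extra structure; but the general statement you were asked to prove does not hold, and you were right to flag the obstruction rather than paper over it.
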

	\begin{proof}
		We argue similarly as in the case $K=1$ provided in \cite[Lemma 2.1]{BiegMed}.
	\end{proof}

	We will always assume that (A0) holds. Lemmas \ref{lem:bdaw1}--\ref{lem:bdaw2} are variants of the results contained in \cite{BiegMed,JeanjeanLuNorm} with some improvements and adapted to the system of equations.
	
	\begin{Lem}\label{lem:bdaw1}
		If (A1)--(A3), (A5),  and \eqref{eq:eta2} hold, then
		$\inf\{|\nabla u|_2^2:u\in\M\cap\D\}>0$.
	\end{Lem}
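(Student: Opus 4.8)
The plan is to use the defining relation of $\M$ to turn the statement into an a~priori inequality for $|\nabla u|_2^2$. If $u\in\M\cap\D$ then $M(u)=0$, i.e. $|\nabla u|_2^2=\frac N2\int_{\rn}H(u)\,dx$, and since $u\neq0$ lies in $H^1(\rn)^K$ we have $|\nabla u|_2>0$. A direct computation from $G=\wt G+\frac1{2^*}\sum_j\theta_j|u_j|^{2^*}$, using $\frac{2^*-2}{2^*}=\frac2N$, gives $H(u)=\wt H(u)+\frac2N\sum_{j=1}^K\theta_j|u_j|^{2^*}$. The goal is then to dominate $\int_{\rn}H(u)\,dx$ by $A_\eps|\nabla u|_2^2+B_\eps|\nabla u|_2^{2^*}$ with a coefficient $A_\eps$ that the hypothesis \eqref{eq:eta2} forces below $1$, so that it can be absorbed on the left.

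For the subcritical part I would first record a growth estimate: combining (A1), (A3), and the continuity from (A0), for every $\eps>0$ there is $C_\eps>0$ such that
\[
\wt G(u)\le(\eta+\eps)|u|^{2_N}+C_\eps|u|^{2^*}\qquad\text{for all }u\in\R^K,
\]
the first term controlling $\wt G$ near the origin (via (A1)) and the second one away from it (via (A3) and continuity). Then (A5) yields $\int_{\rn}\wt H(u)\,dx\le(2^*-2)\int_{\rn}\wt G(u)\,dx$, so that, after multiplying by $N/2$ and using the identity $\frac N2(2^*-2)=2^*$,
\[
|\nabla u|_2^2=\frac N2\int_{\rn}H(u)\,dx\le 2^*(\eta+\eps)\int_{\rn}|u|^{2_N}\,dx+2^*C_\eps\int_{\rn}|u|^{2^*}\,dx+\sum_{j=1}^K\theta_j\int_{\rn}|u_j|^{2^*}\,dx.
\]

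The two remaining integrals are handled by the Gagliardo--Nirenberg and Sobolev inequalities. The key point is that although \eqref{eq:GN} is scalar, it applies to the scalar function $|u|=(\sum_i|u_i|^2)^{1/2}\in H^1(\rn)$: using the pointwise inequality $|\nabla|u||\le|\nabla u|$ (whence $|\nabla|u||_2\le|\nabla u|_2$), and since for $p=2_N$ one has $\delta_{2_N}2_N=2$ and $2_N-2=4/N$, the case $p=2_N$ of \eqref{eq:GN} gives
\[
\int_{\rn}|u|^{2_N}\,dx\le C_{N,2_N}^{2_N}|\nabla u|_2^2\,|u|_2^{4/N}\le C_{N,2_N}^{2_N}|\rho|^{4/N}|\nabla u|_2^2,
\]
the last step using $u\in\D$, i.e. $|u|_2^2\le|\rho|^2$. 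For the critical terms, the Sobolev inequality gives $\int_{\rn}|u|^{2^*}\,dx\le S^{-2^*/2}|\nabla u|_2^{2^*}$, and $\sum_j\theta_j\int_{\rn}|u_j|^{2^*}\,dx\le(\max_j\theta_j)\int_{\rn}|u|^{2^*}\,dx$ since $\sum_j|u_j|^{2^*}\le|u|^{2^*}$.

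Substituting these bounds leaves
\[
|\nabla u|_2^2\le 2^*(\eta+\eps)C_{N,2_N}^{2_N}|\rho|^{4/N}\,|\nabla u|_2^2+B_\eps|\nabla u|_2^{2^*}
\]
for a constant $B_\eps>0$ depending only on $C_\eps$, $\theta$, $N$. By \eqref{eq:eta2} the number $2^*\eta C_{N,2_N}^{2_N}|\rho|^{4/N}$ is $<1$, so fixing $\eps$ small enough makes $A_\eps:=2^*(\eta+\eps)C_{N,2_N}^{2_N}|\rho|^{4/N}<1$ as well; dividing by $|\nabla u|_2^2>0$ then gives $|\nabla u|_2^{2^*-2}\ge(1-A_\eps)/B_\eps$, a bound independent of $u$, and hence $\inf_{\M\cap\D}|\nabla u|_2^2\ge\big((1-A_\eps)/B_\eps\big)^{2/(2^*-2)}>0$. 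I expect the crux to be exactly this absorption step: everything must be arranged so that the $L^2$-critical contribution reproduces $|\nabla u|_2^2$ with a constant that \eqref{eq:eta2} keeps strictly below $1$; the secondary technical obstacle is reducing the vector-valued estimate to the scalar Gagliardo--Nirenberg inequality through $|\nabla|u||\le|\nabla u|$ and the $\D$-constraint $|u|_2\le|\rho|$.
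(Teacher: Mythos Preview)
Your proof is correct and follows essentially the same route as the paper: bound $\frac N2\int H(u)\,dx$ by $2^*(\eta+\eps)|u|_{2_N}^{2_N}+c_\eps|u|_{2^*}^{2^*}$ via (A5) and the growth conditions, apply the scalar Gagliardo--Nirenberg inequality to $|u|$ using $\big|\nabla|u|\big|_2\le|\nabla u|_2$ and $|u|_2\le|\rho|$, and absorb the $L^2$-critical term thanks to \eqref{eq:eta2}. The only cosmetic difference is that you split off the Sobolev-critical piece $\sum_j\theta_j|u_j|^{2^*}$ explicitly, whereas the paper folds it into the single bound $G(u)\le(\eta+\eps)|u|^{2_N}+c_\eps|u|^{2^*}$ (which is legitimate since $G$ itself satisfies (A1), and $G(u)\lesssim|u|^{2^*}$ for $|u|$ large by (A3) applied to $\wt G$ plus the explicit critical term).
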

	\begin{proof}
		Recall that, if $p\in[2,2^*]$, then
		\[
		\big||u|\big|_p=|u|_p \text{ and } \big|\nabla|u|\big|_2\le|\nabla u|_2 \text{ for every }u\in H^1(\rn)^K.
		\]
		For every $\eps>0$ there exists $c_\eps>0$ such that for every $u\in\cM\cap\cD$
		\[\begin{split}
			|\nabla u|_2^2&=\frac{N}{2}\irn H(u)\,dx\le 2^*\bigl(c_\eps|u|_{2^*}^{2^*}+(\eps+\eta)|u|_{2_N}^{2_N}\bigr)=2^*\bigl(c_\eps\big||u|\big|_{2^*}^{2^*}+(\eps+\eta)\big||u|\big|_{2_N}^{2_N}\bigr)\\
			&\le2^*\Bigl(c_\eps C_{N,2^*}^{2^*}\big|\nabla|u|\big|_2^{2^*}+(\eps+\eta)C_{N,2_N}^{2_N}|\rho|^{4/N}\big|\nabla|u|\big|_2^2\Bigr)\\
			&\le2^*\bigl(c_\eps C_{N,2^*}^{2^*}|\nabla u|_2^{2^*}+(\eps+\eta)C_{N,2_N}^{2_N}|\rho|^{4/N}|\nabla u|_2^2\bigr)
		\end{split}\]
		i.e.,
		\begin{equation}\label{eq:Lem22}
			0\le2^*c_\eps C_{N,2^*}^{2^*}|\nabla u|_2^{2^*}+\bigl(2^*(\eps+\eta)C_{N,2_N}^{2_N}|\rho|^{4/N}-1\bigr)|\nabla u|_2^2
		\end{equation}
		Taking $\eps$ sufficiently small so that
		\[
		2^*(\eps+\eta)C_{N,2_N}^{2_N}|\rho|^{4/N}<1
		\]
		we conclude.
	\end{proof}
	
	For $u\in H^1(\rn)^K\setminus\{0\}$ and $s>0$ define $s\star u(x):=s^{N/2}u(sx)$ and $\vp(s):=J(s\star u)$.
	
	\begin{Lem}\label{lem:phi}
		Assume that (A1)--(A5) hold and let $u\in H^1(\rn)^K\setminus\{0\}$ such that
		\begin{equation}\label{eq:eta}
			\eta<\frac{|\nabla u|_2^2}{2|u|_{2_N}^{2_N}}.
		\end{equation}
		Then there exist $a=a(u)>0$ and $b=b(u)\ge a$ such that each $s\in[a,b]$ is a global maximizer for $\vp$ and $\vp$ is increasing on $(0,a)$ and decreasing on $(b,\infty)$. Moreover, $s\star u\in\cM$ if and only if $s\in[a,b]$, $M(s\star u)>0$ if and only if $s\in(0,a)$,
		and $M(s\star u)<0$ is and only if $s>b$. If (A4,$\preceq$) holds, then $a=b$.
	\end{Lem}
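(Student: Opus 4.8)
The plan is to reduce the whole statement to the monotonicity of a single scalar function. Since $s\star\cdot$ preserves the $L^2$-norm, the scaling identities $|\nabla(s\star u)|_2^2=s^2|\nabla u|_2^2$ and $\int_{\rn}G(s\star u)\,dx=s^{-N}\int_{\rn}G(s^{N/2}u)\,dx$ give
\[
\varphi(s)=\frac{s^2}{2}|\nabla u|_2^2-s^{-N}\int_{\rn}G(s^{N/2}u)\,dx .
\]
Differentiating under the integral sign (legitimate by the growth bound in (A0)) and using $\langle g(w),w\rangle-2G(w)=H(w)$, I obtain the key identity
\[
\varphi'(s)=\frac{N}{2}\,s\bigl(c-\omega(s)\bigr),\qquad c:=\frac{2}{N}|\nabla u|_2^2,\quad \omega(s):=s^{-N-2}\int_{\rn}H(s^{N/2}u)\,dx,
\]
while the same computation yields $M(s\star u)=s\,\varphi'(s)$. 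As $s>0$, the signs of $\varphi'(s)$ and of $M(s\star u)$ are both controlled by the sign of $c-\omega(s)$, so everything follows once the monotone behaviour of $\omega$ and its limits at $0$ and $\infty$ are understood.

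First I would show that $\omega$ is nondecreasing, and strictly increasing under (A4,$\preceq$). Differentiating gives
\[
\omega'(s)=\frac{N}{2}\,s^{-N-3}\int_{\rn}\bigl(\langle h(s^{N/2}u),s^{N/2}u\rangle-2_N H(s^{N/2}u)\bigr)\,dx .
\]
Here I would check that (A4) transfers from $\wt H$ to $H$: writing $H=\wt H+\frac{2}{N}\sum_{j}\theta_j|u_j|^{2^*}$ one computes $\langle h(w),w\rangle-2_N H(w)=\bigl(\langle\wt h(w),w\rangle-2_N\wt H(w)\bigr)+\frac{2}{N}(2^*-2_N)\sum_j\theta_j|w_j|^{2^*}$, and since $2^*>2_N$ both terms are nonnegative; hence $\omega'\ge0$. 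Under (A4,$\preceq$) the remark after that assumption gives $2_N H\preceq\langle h,\cdot\rangle$, so Lemma \ref{Lem:ineq} makes the integral strictly positive for every nonzero argument (in particular for $s^{N/2}u$), whence $\omega'>0$ and $\omega$ is strictly increasing.

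It then remains to pin down the boundary behaviour of $\varphi$. Near $0$ one has $\lim_{s\to0^+}\varphi(s)=0$ (because $0\le G(w)\le C(|w|^{2_N}+|w|^{2^*})$ forces $s^{-N}\int_{\rn}G(s^{N/2}u)\,dx\le C(s^2|u|_{2_N}^{2_N}+s^{2^*}|u|_{2^*}^{2^*})\to0$); moreover, dividing by $s^2$, letting the critical part vanish, and applying the reverse Fatou lemma with (A1) (the integrand being dominated by the integrable $C(|u|^{2_N}+|u|^{2^*})$) yields
\[
\liminf_{s\to0^+}\frac{\varphi(s)}{s^2}\ge\frac12|\nabla u|_2^2-\eta|u|_{2_N}^{2_N}>0
\]
by \eqref{eq:eta}, so $\varphi>0$ for small $s$. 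At infinity $\varphi(s)\to-\infty$: if $\theta\in(0,\infty)^K$ this follows from $\varphi(s)\le\frac{s^2}{2}|\nabla u|_2^2-\frac{s^{2^*}}{2^*}\sum_j\theta_j|u_j|_{2^*}^{2^*}$ and $2^*>2$, while if $\theta=0$ the ratio $\wt G(s^{N/2}u)/(s^{N+2})=\bigl(\wt G(s^{N/2}u)/|s^{N/2}u|^{2_N}\bigr)|u|^{2_N}\to\infty$ pointwise on $\{u\ne0\}$ by (A2), so Fatou gives $s^{-N-2}\int_{\rn}\wt G(s^{N/2}u)\,dx\to\infty$ and hence $\varphi(s)/s^2\to-\infty$.

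Finally I would assemble these pieces. Since $\omega$ is continuous and nondecreasing with $\omega(s)<c$ for some small $s$ (otherwise $\omega\ge c$ everywhere, so $\varphi$ would be nonincreasing, contradicting $\varphi(0^+)=0$ and $\varphi>0$ near $0$) and $\omega(s)>c$ for some large $s$ (otherwise $\omega\le c$ everywhere, so $\varphi$ would be nondecreasing, contradicting $\varphi\to-\infty$), the level set $\{s>0:\omega(s)=c\}$ is a nonempty compact interval $[a,b]$ with $0<a\le b<\infty$, and $\omega<c$ on $(0,a)$, $\omega=c$ on $[a,b]$, $\omega>c$ on $(b,\infty)$. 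Through $\varphi'(s)=\frac{N}{2}s(c-\omega(s))$ this gives exactly that $\varphi$ is increasing on $(0,a)$, constant (hence globally maximal) on $[a,b]$, and decreasing on $(b,\infty)$, while $M(s\star u)=s\varphi'(s)$ gives the corresponding sign statements and $s\star u\in\cM\iff s\in[a,b]$; under (A4,$\preceq$) the strict monotonicity of $\omega$ collapses $[a,b]$ to a point, so $a=b$. I expect the main obstacle to be the boundary analysis — making the $\limsup$ in (A1) rigorous near $0$ (the reverse-Fatou step linking to \eqref{eq:eta}) and the Fatou argument for the superquadratic case $\theta=0$ at infinity — together with the bookkeeping that carries (A4) and (A4,$\preceq$) from $\wt H$ to $H$ across the Sobolev-critical term.
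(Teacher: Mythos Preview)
Your proof is correct and follows essentially the same approach as the paper's: both reduce everything to the monotonicity of $\omega(s)=s^{-N-2}\int_{\rn}H(s^{N/2}u)\,dx$ via the identity $M(s\star u)=s\varphi'(s)=\frac{N}{2}s^2\bigl(c-\omega(s)\bigr)$, establish $\varphi(0^+)=0$, $\varphi>0$ near $0$, and $\varphi\to-\infty$ at infinity from (A1)--(A3), (A5), and \eqref{eq:eta}, and then read off the interval $[a,b]$ (with $a=b$ under (A4,$\preceq$) via Lemma~\ref{Lem:ineq}). The only cosmetic difference is that the paper obtains $\varphi(s)>0$ for small $s$ via the pointwise splitting $G(u)\le(\eta+\varepsilon)|u|^{2_N}+c_\varepsilon|u|^{2^*}$, whereas you phrase the same bound through a reverse-Fatou step; both are equivalent and your domination $C(|u|^{2_N}+|u|^{2^*})$ is valid for $s\le1$.
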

	Note that \eqref{eq:eta2} implies \eqref{eq:eta} provided that $u\in\cD$. Indeed, from \eqref{eq:GN}
	$$2\eta|u|_{2_N}^{2_N}\leq 2\eta C_{N,2_N}^{2_N} |\nabla u|_2^{2} |u|_2^{4/N}\leq2\eta C_{N,2_N}^{2_N} |\nabla u|_2^{2}
	|\rho|^{4/N}<|\nabla u|_2^{2}.$$
	\begin{proof}[Proof of Lemma \ref{lem:phi}]
		Notice that from (A1)
		\[
		\vp(s)=\int_{\rn}\frac{s^2}{2}|\nabla u|^2-\frac{G(s^{N/2}u)}{s^N}\,dx\to0
		\]
		as $s\to0^+$ and from (A2) $\lim_{s\to\infty}\vp(s)=-\infty$.
		From (A1) and (A3) for every $\eps>0$ there exists $c_\eps>0$ such that
		\[
		G(u)\le(\eps+\eta)|u|^{2_N}+c_\eps|u|^{2^*},
		\]
		therefore,
		\[
		\vp(s)\geq s^2\Big(\int_{\rn}\frac{1}{2}|\nabla u|^2-(\eta+\eps)|u|^{2_N}\,dx\Big)-c_\eps s^{2^*}\int_{\R^N}|u|^{2^*}\,dx>0
		\]
		for sufficiently small $\eps$ and $s$.	
		It follows that there exists an interval $[a,b]\subset(0,\infty)$ such that $\vp|_{[a,b]}=\max\vp$. Moreover
		\[
		\vp'(s)=s\int_{\rn}|\nabla u|^2-\frac{N}{2}\frac{H(s^{N/2}u)}{s^{N+2}}\,dx
		\]
		and the function
		\[
		s\in(0,\infty)\mapsto\int_{\rn}\frac{H(s^{N/2}u)}{s^{N+2}}\,dx
		\]
		is nondecreasing (resp. increasing) due to (A4) (resp. (A4,$\preceq$) and Lemma \ref{Lem:ineq}) and tends to $\infty$ as $s\to\infty$ due to (A2) and (A5). There follows that $\vp'(s)>0$ if $s\in(0,a)$ and $\vp'(s)<0$ if $s>b$ and that $a=b$ if (A4,$\preceq$) holds. Finally, observe that
		\[
		s\vp'(s)=\int_{\rn}s^2|\nabla u|^2-\frac{N}{2}\frac{H(s^{N/2}u)}{s^N}\,dx=M(s\star u).\qedhere
		\]
	\end{proof}
	
	\begin{Lem}\label{lem:coerc}
		If (A1)--(A5) and \eqref{eq:eta2} are verified, then $J$ is coercive on $\cM\cap\cD$.
	\end{Lem}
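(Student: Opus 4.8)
The plan is to argue by contradiction, exploiting the fact that $\cD$ bounds the $L^2$-norms: every $u\in\cD$ satisfies $|u_i|_2\le\rho_i$, so on $\cM\cap\cD$ one has $\|u\|_{H^1}\to\infty$ if and only if $|\nabla u|_2\to\infty$, and it suffices to prove $J(u_n)\to\infty$ whenever $u_n\in\cM\cap\cD$ and $|\nabla u_n|_2\to\infty$. The starting point is the identity, valid on $\cM$ because $M(u)=0$ and the Sobolev-critical term is $2^*$-homogeneous,
\[
J(u)=\int_{\rn}\Big(\tfrac N4\wt H(u)-\wt G(u)\Big)\,dx+\tfrac1N\sum_{j=1}^K\theta_j|u_j|_{2^*}^{2^*},
\]
both summands being nonnegative by (A5); in particular $0\le J(u)\le\tfrac12|\nabla u|_2^2$ on $\cM\cap\cD$. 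Suppose $J(u_n)\le\Lambda$. Then $\sum_j\theta_j|u_{n,j}|_{2^*}^{2^*}\le N\Lambda$ and $\int_{\rn}(\tfrac N4\wt H-\wt G)(u_n)\le\Lambda$ are both bounded; combined with $|\nabla u_n|_2^2=\tfrac N2\int_{\rn}\wt H(u_n)+\sum_j\theta_j|u_{n,j}|_{2^*}^{2^*}$ this forces $\int_{\rn}\wt H(u_n)\to\infty$, hence (by (A5), $\wt H\le(2^*-2)\wt G$) $\int_{\rn}\wt G(u_n)\to\infty$, and writing $2\int\wt G=|\nabla u_n|_2^2-2\int(\tfrac N4\wt H-\wt G)-\sum_j\theta_j|u_{n,j}|_{2^*}^{2^*}$ yields the crucial rate $\int_{\rn}\wt G(u_n)/|\nabla u_n|_2^2\to\tfrac12$, i.e.\ $u_n$ behaves asymptotically like an $L^2$-critical family.

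Next I would rescale. Put $s_n:=|\nabla u_n|_2^{-1}\to0$ and $v_n:=s_n\star u_n$, so that $|\nabla v_n|_2=1$, $|v_{n,i}|_2=|u_{n,i}|_2\le\rho_i$, and $v_n$ is bounded in $H^1(\rn)^K$. Since $\theta_j|v_{n,j}|_{2^*}^{2^*}=s_n^{2^*}\theta_j|u_{n,j}|_{2^*}^{2^*}\to0$, the Sobolev-critical mass of the rescaled sequence vanishes — this is exactly what the boundedness of $\sum_j\theta_j|u_{n,j}|_{2^*}^{2^*}$ buys us, and it neutralises the critical term in the estimates below. Because $u_n\in\cM$ and \eqref{eq:eta2} implies \eqref{eq:eta} on $\cD$, Lemma \ref{lem:phi} gives that $1$ is a global maximiser of $s\mapsto J(s\star u_n)$, so, using the group law $\sigma\star v_n=(\sigma s_n)\star u_n$, one obtains $J(u_n)\ge J(\sigma\star v_n)$ for every $\sigma>0$. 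I would then apply the concentration–compactness dichotomy to $(v_n)$.

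In the vanishing case $v_n\to0$ in $L^p$ for every $p\in(2,2^*)$. Writing $J(\sigma\star v_n)=\tfrac{\sigma^2}2-\int_{\rn}G(\sigma\star v_n)$, using the consequence of (A1) and (A3) that for each $\eps>0$ there are $C_\eps>0$ and $q\in(2_N,2^*)$ with $\wt G(w)\le(\eta+\eps)|w|^{2_N}+\eps|w|^{2^*}+C_\eps|w|^q$, and the scaling laws $|\sigma\star v_n|_{2_N}^{2_N}=\sigma^2|v_n|_{2_N}^{2_N}$, $|\sigma\star v_n|_q^q=\sigma^{\delta_qq}|v_n|_q^q$, $|\sigma\star v_n|_{2^*}^{2^*}=\sigma^{2^*}|v_n|_{2^*}^{2^*}$, one gets $J(\sigma\star v_n)\ge\tfrac{\sigma^2}2-o(1)\sigma^2-\eps\,O(1)\sigma^{2^*}-o(1)\sigma^{2^*}-C_\eps\,o(1)\sigma^{\delta_qq}$, the $o(1)$'s coming from vanishing and from the critical mass of $v_n$. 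Choosing $\sigma$ large, then $\eps$ small, then $n$ large makes $J(\sigma\star v_n)>\Lambda$, so $J(u_n)>\Lambda$, a contradiction.

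In the non-vanishing case, after a translation $v_n\weakto v\neq0$. Here I would undo the scaling with $a_n:=s_n^{-N/2}\to\infty$: the substitution $y=s_n^{-1}x$ gives $\int_{\rn}\wt G(u_n)=s_n^N\int_{\rn}\wt G(a_nv_n)\,dy$, and the algebraic identity $s_n^{N+2}a_n^{2_N}=1$ yields
\[
\frac{\int_{\rn}\wt G(u_n)}{|\nabla u_n|_2^2}=\int_{\rn}\frac{\wt G(a_nv_n)}{|a_nv_n|^{2_N}}\,|v_n|^{2_N}\,dy.
\]
When $\theta=0$ the argument is clean: on $\{v\neq0\}$ one has $a_nv_n\to\infty$, so by (A2) the integrand tends to $+\infty$ pointwise, and Fatou forces the left-hand side to $+\infty$, contradicting the rate $\to\tfrac12$ from the first paragraph. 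The hard part will be the case $\theta\in(0,\infty)^K$, where (A2) only provides a positive \emph{liminf} (finite) of $\wt G/|u|^{2_N}$, so Fatou merely gives a finite lower bound and no direct contradiction: the surviving profile $v$ lives precisely where $\wt G$ is asymptotically $L^2$-critical. In that regime the gap $\tfrac N4\wt H-\wt G$ is negligible, and the contradiction must instead come from the Sobolev-critical term: the Pohožaev–Nehari relation together with \eqref{eq:eta2} forces $\sum_j\theta_j|u_{n,j}|_{2^*}^{2^*}\to\infty$ (as in the model computation $\sum_j\theta_j|u_{n,j}|_{2^*}^{2^*}\ge(1-2^{-1}\cdot2\,C_{N,2_N}^{2_N}\eta|\rho|^{4/N})|\nabla u_n|_2^2$), contradicting its boundedness. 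Reconciling these two mechanisms — the super-$L^2$-critical growth (A2) and the Sobolev-critical threshold controlled by \eqref{eq:eta2} — into a single contradiction is the main obstacle, and is where the precise form of \eqref{eq:eta2} is used.
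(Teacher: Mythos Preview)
Your approach is essentially the paper's: argue by contradiction, rescale via $s_n=|\nabla u_n|_2^{-1}$ and $v_n=s_n\star u_n$, apply Lions' dichotomy to $(v_n)$, and invoke Lemma~\ref{lem:phi} for the comparison $J(u_n)\ge J(\sigma\star v_n)$. Your contradiction setup, which immediately yields $\sum_j\theta_j|u_{n,j}|_{2^*}^{2^*}\le N\Lambda$ and hence $|v_n|_{2^*}\to0$, is a clean way to neutralise the Sobolev-critical term in the vanishing case.

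The ``hard part'' you flag in the non-vanishing case with $\theta\in(0,\infty)^K$ is artificial. The paper runs the Fatou argument with $G$, not $\wt G$: as remarked right after the list of assumptions, $\lim_{|u|\to\infty}G(u)/|u|^{2_N}=\infty$ holds whenever (A2) does, because when $\theta\ne0$ the critical term $\tfrac1{2^*}\sum_j\theta_j|u_j|^{2^*}$ already forces the quotient to blow up. Under your contradiction hypothesis one equally has $\int_{\rn}G(u_n)/|\nabla u_n|_2^2\to\tfrac12$ (the difference $\int G-\int\wt G=\tfrac1{2^*}\sum_j\theta_j|u_{n,j}|_{2^*}^{2^*}\le N\Lambda/2^*$ is bounded), so the identity
\[
\frac{\int_{\rn}G(u_n)\,dx}{|\nabla u_n|_2^2}=\int_{\rn}\frac{G(a_nv_n)}{|a_nv_n|^{2_N}}\,|v_n|^{2_N}\,dy
\]
together with Fatou gives the contradiction uniformly in $\theta$; there is nothing to reconcile.

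That said, your ``model computation'' is also correct and in fact bypasses the dichotomy entirely when $\theta\ne0$: exactly as in the proof of Lemma~\ref{lem:bdaw1}, $M(u_n)=0$, (A5), (A1), (A3), and Gagliardo--Nirenberg give
\[
\bigl(1-2^*(\eta+\eps)C_{N,2_N}^{2_N}|\rho|^{4/N}\bigr)|\nabla u_n|_2^2\le 2^*c_\eps|u_n|_{2^*}^{2^*},
\]
whence by \eqref{eq:eta2} $\sum_j\theta_j|u_{n,j}|_{2^*}^{2^*}\gtrsim|\nabla u_n|_2^2\to\infty$, contradicting the bound $N\Lambda$. (The constant in your displayed inequality should carry the factor $2^*$, not $2^{-1}\cdot2$.) So you actually have two independent routes, and the obstacle is just an artefact of having fed $\wt G$ rather than $G$ into Fatou.
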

	\begin{proof}
		First of all note that, if $u\in\cM$, then due to (A5)
		\[
		J(u)=J(u)-\frac12M(u)=\int_{\rn}\frac{N}{4}H(u)-G(u)\,dx\ge0
		\]
		and so, a fortiori, $J$ is nonnegative on $\cM\cap\cD$.
		Let $(u^{(n)})\subset\cM\cap\cD$ such that $\|u^{(n)}\|\to\infty$, i.e., $\lim_n|\nabla u^{(n)}|_2=\infty$, and define
		\[
		s_n:=|\nabla u^{(n)}|_2^{-1}>0\quad\text{and}\quad w^{(n)}:=s_n\star u^{(n)}.
		\]
		Note that $s_n\to0$, $|w^{(n)}_i|_2=|u^{(n)}_i|_2\le\rho_i$ for $i\in\{1,\dots,K\}$, and $|\nabla w^{(n)}|_2^2=1$, in particular $(w^{(n)})$ is bounded in $H^1(\rn)^K$.
		Suppose by contradiction that
		\[
		\limsup_n\max_{y\in\rn}\int_{B(y,1)}|w^{(n)}|^2\,dx>0.
		\]
		Then there exist $(y^{(n)})\subset\rn$ and $w\in H^1(\rn)^K$ such that, up to a subsequence, $w^{(n)}(\cdot+y^{(n)})\rightharpoonup w\ne0$ in $H^1(\rn)^K$ and $w^{(n)}(\cdot+y^{(n)})\to w$ a.e. in $\rn$. Thus, owing to (A2),
		\[\begin{split}
			0&\le\frac{J(u^{(n)})}{|\nabla u^{(n)}|_2^2}\le\frac12-\int_{\rn}\frac{G(u^{(n)})}{|\nabla u^{(n)}|_2^2}\,dx=\frac12-s_n^{N+2}\int_{\rn}G\bigl(u^{(n)}(s_nx)\bigr)\,dx\\
			&=\frac12-s_n^{N+2}\int_{\rn}G(s_n^{-N/2}w^{(n)})=\frac12-\int_{\rn}\frac{G(s_n^{-N/2}w^{(n)})}{|s_n^{-N/2}w^{(n)}|^{2_N}}|w^{(n)}|^{2_N}\,dx\\
			&=\frac12-\int_{\rn}\frac{G\bigl(s_n^{-N/2}w^{(n)}(x+y^{(n)})\bigr)}{|s_n^{-N/2}w^{(n)}(x+y^{(n)})|^{2_N}}|w^{(n)}(x+y^{(n)})|^{2_N}\,dx\to-\infty.
		\end{split}\]
		It follows that
		\[
		\lim_n\max_{y\in\rn}\int_{B(y,1)}|w^{(n)}|^2\,dx=0
		\]
		and so, from Lions' Lemma \cite{Lions84}, $w^{(n)}\to0$ in $L^{2_N}(\rn)^K$. Since $$s_n^{-1}\star w^{(n)}=u^{(n)}\in\cM,$$ Lemma \ref{lem:phi} yields
		\[
		J(u^{(n)})=J(s_n^{-1}\star w^{(n)})\ge J(s\star w^{(n)})=\frac{s^2}{2}-s^N\int_{\rn}G\bigl(s^{N/2}w^{(n)}(s\cdot)\bigr)\,dx
		\]
		for every $s>0$. Taking into account that
		\[
		\lim_n\int_{\rn}G\bigl(s^{N/2}w^{(n)}(s\cdot)\bigr)\,dx=0,
		\]
		we have that $\liminf_nJ(u^{(n)})\ge s^2/2$ for every $s>0$, i.e., $\lim_nJ(u^{(n)})=\infty$.
	\end{proof}
	
	\begin{Lem}\label{lem:bdaw2}
		If (A1)--(A5) and \eqref{eq:eta2} are verified, then $c:=\inf_{\cM\cap\cD}J>0$.
	\end{Lem}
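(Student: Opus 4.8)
The plan is to produce a \emph{uniform} positive lower bound for $J$ on $\cM\cap\cD$ by exploiting the scaling $s\star u$ together with its two key features: it preserves each $L^2$-norm, hence maps $\cD$ into itself, and for $u\in\cM$ the value $s=1$ is a global maximiser of $\vp(s)=J(s\star u)$ by Lemma \ref{lem:phi}. The idea is that, although $\cM\cap\cD$ contains functions of arbitrarily large gradient (where a direct estimate of $J$ is hopeless because of the Sobolev-critical term), every such $u$ can be rescaled \emph{down} to a function with a fixed, small gradient norm at which an elementary estimate applies, and this rescaling can only decrease $J$.

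First I would record the pointwise bound $G(u)\le(\eta+\eps)|u|^{2_N}+c_\eps|u|^{2^*}$ coming from (A1) and (A3) (as already used in the proof of Lemma \ref{lem:phi}), and combine it with the Gagliardo--Nirenberg inequality \eqref{eq:GN} and $\big|\nabla|v|\big|_2\le|\nabla v|_2$ to obtain, for every $v\in\cD$ with $t_0:=|\nabla v|_2^2$,
\[
J(v)\ge At_0-Bt_0^{2^*/2},\qquad A:=\tfrac12-(\eta+\eps)C_{N,2_N}^{2_N}|\rho|^{4/N},\quad B:=c_\eps C_{N,2^*}^{2^*}.
\]
By \eqref{eq:eta2} we have $C_{N,2_N}^{2_N}\eta|\rho|^{4/N}<1/2^*<1/2$, so $A>0$ once $\eps$ is small; then fix once and for all a value $t_0>0$ so small that $Bt_0^{(2^*-2)/2}\le A/2$, whence $J(v)\ge\tfrac A2 t_0=:\eps_0>0$ for every $v\in\cD$ with $|\nabla v|_2^2=t_0$.

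Next, given an arbitrary $u\in\cM\cap\cD$, I would note that $|\nabla u|_2>0$ (otherwise $u$ would be constant, hence $0$, against $u\in\cM$), set $s:=\sqrt{t_0}/|\nabla u|_2>0$, and put $v:=s\star u$. Since $\star$ preserves the $L^2$-norm of each component, $v\in\cD$, and $|\nabla v|_2^2=s^2|\nabla u|_2^2=t_0$, so the previous step gives $J(v)\ge\eps_0$. On the other hand, because \eqref{eq:eta2} guarantees \eqref{eq:eta} on $\cD$, Lemma \ref{lem:phi} applies to $u$, and since $u=1\star u\in\cM$ the value $s=1$ is a global maximiser of $\vp$; hence $J(u)=\vp(1)=\max_{s'>0}J(s'\star u)\ge J(s\star u)=J(v)\ge\eps_0$. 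As $u$ was arbitrary, $c=\inf_{\cM\cap\cD}J\ge\eps_0>0$.

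The only genuine obstacle is the Sobolev-critical term: it rules out a direct ``vanishing'' argument on a minimizing sequence, since Lions' lemma yields compactness only in $L^p$ for $p\in(2,2^*)$ and not at the endpoint $2^*$. The scaling device circumvents this entirely by transferring the estimate to the small-gradient regime, where the critical contribution is negligible, and \eqref{eq:eta2} is precisely what makes the leading coefficient $A$ positive there. I note that this route needs neither the coercivity of Lemma \ref{lem:coerc} nor Lemma \ref{lem:bdaw1}: the bound $J\ge\eps_0$ holds pointwise and uniformly on $\cM\cap\cD$.
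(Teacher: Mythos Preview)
Your proof is correct and follows essentially the same route as the paper: both establish a lower bound of the form $J(v)\ge At_0-Bt_0^{2^*/2}$ for $v\in\cD$ with $t_0=|\nabla v|_2^2$ via (A1), (A3) and Gagliardo--Nirenberg, fix a small value of the gradient norm where this is uniformly positive, and then use the $s\star$-scaling together with Lemma~\ref{lem:phi} to transport an arbitrary $u\in\cM\cap\cD$ down to that regime without increasing $J$. The only cosmetic difference is that the paper packages the small-gradient estimate as $J(v)\ge|\nabla v|_2^2/(2N)$ for $|\nabla v|_2\le\alpha$ with explicit choices of $\eps$ and $\alpha$, whereas you choose $t_0$ abstractly so that $Bt_0^{(2^*-2)/2}\le A/2$.
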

	\begin{proof}
		We prove that there exists $\alpha>0$ such that
		\begin{equation}\label{eq:alpha}
			|\nabla u|_2\le\alpha\Rightarrow J(u)\ge\frac{|\nabla u|_2^2}{2N}.
		\end{equation}
		From \eqref{eq:GN} and \eqref{eq:eta2}, for every $\eps>0$ there exists $c_\eps>0$ such that
		\[\begin{split}
			\int_{\rn}G(u)\,dx&\le c_\eps C_{N,2^*}^{2^*}|\nabla u|_2^{2^*}+(\eps+\eta)C_{N,2_N}^{2_N}|\rho|^{4/N}|\nabla u|_2^2\\
			&\le\left(c_\eps C_{N,2^*}^{2^*}|\nabla u|_2^{2^*-2}+\eps C_{N,2_N}^{2_N}|\rho|^{4/N}+\frac12-\frac1N\right)|\nabla u|_2^2.
		\end{split}\]
		Choosing
		\[
		\eps=\frac{1}{4NC_{N,2_N}^{2_N}|\rho|^{4/N}} \quad \text{and} \quad \alpha=\frac{1}{(4Nc_\eps C_{N,2^*}^{2^*})^\frac{1}{2^*-2}}
		\]
		we obtain, provided $|\nabla u|_2\le\alpha$,
		\[
		\int_{\rn}G(u)\,dx\le\left(\frac12-\frac{1}{2N}\right)|\nabla u|_2^2
		\]
		and so $\displaystyle J(u)\ge\frac{|\nabla u|_2^2}{2N}$.
		Now take $u\in\cM\cap\cD$ and $\alpha>0$ such that \eqref{eq:alpha} holds and define
		\[
		s:=\frac{\alpha}{|\nabla u|_2}\quad\text{and}\quad w:=s\star u.
		\]
		Clearly $|w_i|_2=|u_i|_2\le\rho_i$ for $i\in\{1,\dots,K\}$ and $|\nabla w|_2=\alpha$, whence in view of Lemma \ref{lem:phi}
		\[
		J(u)\ge J(w)\ge\frac{|\nabla w|_2^2}{2N}=\frac{\alpha^2}{2N}>0.\qedhere
		\]
	\end{proof}

From now on, $c>0$ will stand for the infimum of $J$ over $\cM\cap\cD$.

\begin{Prop}\label{lem:cless}
	Assume that $\theta\in(0,\infty)^K$ and that (A1)--(A5), \eqref{eq:eta2}, and (at least) one of the following conditions hold:
	\begin{itemize}
		\item [(i)] $N\geq 5$;
		\item [(ii)] there exist $2_N\leq p\leq 2^*$ and $2_N\le q<2^*$ such that
		\begin{equation}\label{eq:pq}
		\liminf_{|u|\to 0}\frac{\wt G(u)}{|u|^p}>0 \quad \text{and} \quad \liminf_{|u|\to \infty}\frac{\wt G(u)}{|u|^q}>0
		\end{equation}
		and $\max\{p,q\}/2-\min\{p,q\}<-1$ if $N=3$.
	\end{itemize}
	Then \eqref{eq:groundlevel} holds.
\end{Prop}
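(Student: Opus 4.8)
The goal is the strict inequality in \eqref{eq:groundlevel}, and the plan is to exhibit a single element of $\M\cap\D$ whose energy lies below $\frac1N S^{N/2}\sum_{i=1}^K\theta_i^{1-N/2}$. The mass-preserving dilation $s\star u$ leaves $\D$ invariant, and by Lemma \ref{lem:phi} the map $s\mapsto J(s\star u)$ attains its supremum at a point of $\M$; hence for any $v\in\D$ one has $\inf_{\M\cap\D}J\le\max_{s>0}J(s\star v)$, and it suffices to build $v=v_\eps\in\D$ for which the right-hand side falls below the threshold as $\eps\to0$. For $v_\eps$ I would take the Aubin--Talenti instanton that realises $S$, truncated by a fixed cut-off (unavoidable in low dimensions, where the instanton is not square-integrable), concentrated at scale $\eps$, and placed in all $K$ components with heights $t_i=\theta_i^{(2-N)/4}$; since $|v_{\eps,i}|_2\to0$, we have $v_\eps\in\D$ for $\eps$ small.

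Everything then reduces to expanding $\max_{s>0}J(s\star v_\eps)$. Writing $J(u)=\frac12|\nabla u|_2^2-\frac1{2^*}\sum_{j=1}^K\theta_j|u_j|_{2^*}^{2^*}-\irn\wt G(u)\,dx$, the Sobolev-critical part, maximised over $s$, is independent of the overall scaling by homogeneity, and with the chosen heights and the classical instanton asymptotics $|\nabla v_\eps|_2^2=S^{N/2}+O(\eps^{N-2})$, $|v_{\eps,j}|_{2^*}^{2^*}=S^{N/2}+O(\eps^N)$ it equals $\frac1N S^{N/2}\sum_i\theta_i^{1-N/2}+O(\eps^{N-2})$. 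Because $\wt G\ge0$, the term $\irn\wt G(s\star v_\eps)\,dx$ only decreases the energy, so the whole matter comes down to showing that, for $s$ near the maximiser, this subcritical contribution is bounded below by a quantity dominating the truncation error $O(\eps^{N-2})$.

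To bound $\irn\wt G(s\star v_\eps)\,dx$ from below I would split $\rn$ into the core of the rescaled bubble, where its argument is large and either (A2) or the second estimate in \eqref{eq:pq} gives $\wt G\gtrsim|\cdot|^{2_N}$, resp. $\wt G\gtrsim|\cdot|^{q}$, and the tail, where the argument is small and the first estimate in \eqref{eq:pq} gives $\wt G\gtrsim|\cdot|^{p}$. In case (i), $N\ge5$, the core bound from (A2) alone produces a gain of order $\eps^{4/N}$, which dominates $\eps^{N-2}$, so no further hypothesis is needed. In case (ii) one keeps the height ratios fixed and optimises the remaining concentration and amplitude within the $L^2$-budget defining $\D$: the core contribution is then governed by $q$ and the tail by $p$, the bounds $2_N\le p\le2^*$ and $2_N\le q<2^*$ guarantee that the relevant integrals converge, and in the borderline dimension $N=3$ the condition $\max\{p,q\}/2-\min\{p,q\}<-1$ is precisely what forces the combined gain to beat $\eps^{N-2}=\eps$.

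The delicate, and genuinely hard, step is this last dimension-sensitive comparison. For $N\ge5$ the error $\eps^{N-2}$ decays fast and the crude $L^2$-critical lower bound closes the gap immediately; but for $N=3$ (and, under (ii), also $N=4$) the instanton's failure to be square-integrable puts the truncation error and the subcritical gain on comparable scales, so one must carefully track the interplay between the concentration scale, the $L^2$-constraint, and the growth exponents $p$ and $q$ of $\wt G$ at the origin and at infinity. Balancing the core gain (tied to $q$) against the tail gain (tied to $p$) while keeping $v_\eps$ inside $\D$ is exactly where the hypothesis $\max\{p,q\}/2-\min\{p,q\}<-1$ is used to finish the proof.
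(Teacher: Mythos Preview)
Your overall strategy matches the paper's: test with a concentrating Aubin--Talenti profile in all components with heights $\theta_i^{(2-N)/4}$, bound $\max_sJ(s\star v_\eps)$ by the purely critical maximum minus the $\wt G$-contribution, and show the latter beats the truncation error. Two points deserve comment.

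For case~(i) the paper takes a shorter route: when $N\ge5$ the instanton $u_0^\eps$ is already in $L^2(\rn)$ with $|u_0^\eps|_2\to0$, so no cut-off is needed, the critical part equals the threshold \emph{exactly}, and the bare strict positivity $\wt G(u)>0$ for $u\ne0$ (from (A2) and (A5)) yields \eqref{eq:groundlevel} with no quantitative estimate at all. Your truncated version with the $\eps^{4/N}$ core gain works too, but is heavier than necessary.

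For case~(ii) there is a genuine gap. Your initial $v_\eps$ has $|v_{\eps,i}|_2\to0$, and with that choice the subcritical gain is too small: for $N=3$ and $s_\eps\approx1$ the core gives $\int_{\{|v_\eps|\ge1\}}|v_\eps|^q\sim\eps^{3-q/2}$ and the tail $\int_{\{|v_\eps|\le1\}}|v_\eps|^p\sim\eps^{3/2}$, neither of which beats the truncation error $\eps$ when $q\le4$ (and the hypothesis allows $q$ down to $2_N=10/3$). The phrase ``optimise amplitude within the $L^2$-budget'' gestures at the fix, but the precise move is decisive and you have not carried it out: one must replace $v_\eps$ by $\dfrac{\bar\rho}{|v_\eps|_2}\,v_\eps$ with $\bar\rho=\min_i\rho_i$, saturating the constraint. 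This inflates the amplitude by a factor $\sim|\phi u_0^\eps|_2^{-1}\to\infty$, which drives the maximiser $s_\eps$ down; one first needs the lower bound $s_\eps^{2^*-2}\gtrsim|\phi u_0^\eps|_2^{2^*-2}$ (obtained from $M(s_\eps\star v_\eps)=0$ together with (A1), (A3), (A5) and \eqref{eq:eta2}), and only then does the core--tail split deliver a gain of order $|\phi u_0^\eps|_2^{(N/2-1)\max\{p,q\}-N}\,\eps^{N-(N/2-1)\min\{p,q\}}$, which for $N=3$ equals $\eps^{(3+\max\{p,q\}/2-\min\{p,q\})/2}$ and beats $\eps$ exactly under the stated condition. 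Without this explicit rescaling and the accompanying control of $s_\eps$, the argument does not close.
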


Recall that, from (A2), the second condition in \eqref{eq:pq} always holds with $q=2_N$. Notice that the restriction on the relation between $p,q$ is always satisfied if $p=q$.

	\begin{proof}[Proof of Proposition \ref{lem:cless}]
		Define $u_0^1$ as the Aubin--Talenti instanton \cite{Aubin,Talenti}
		\[
		u_0^1(x):=\left(\frac{\sqrt{N(N-2)}}{1+|x|^2}\right)^\frac{N-2}{2}
		\]
		and, for $\varepsilon>0$,
		\[
		u_0^\varepsilon(x):=\varepsilon^{1-N/2}u_0^1(x/\varepsilon)=\left(\frac{\varepsilon\sqrt{N(N-2)}}{\varepsilon^2+|x|^2}\right)^\frac{N-2}{2}.
		\]
		Recall that, for every $\varepsilon>0$, $|\nabla u_0^\varepsilon|_2=|\nabla u_0^1|_2$, $|u_0^\varepsilon|_{2^*}=|u_0^1|_{2^*}$, and $u_0^\varepsilon$ is a minimizer for
		\begin{equation*}
		S:=\inf\left\{\int_{\rn}|\nabla v|^2\,dx:v\in \cD^{1,2}(\rn),\int_{\rn}|v|^{2^*}\,dx=1\right\}.
		\end{equation*}
	
		\textit{(i)} For every $\eps>0$ and $j\in\{1,\dots,K\}$ define $\bar{u}_j^\varepsilon:=\theta_j^{(2-N)/4}u_0^\eps$. Since $u_0^\varepsilon\in L^2(\rn)$ for every $\varepsilon>0$ and $|u_0^\varepsilon|_2\to0$ as $\varepsilon\to0^+$, we have $\bar{u}^\varepsilon:=(\bar{u}^\varepsilon_1,...,\bar{u}^\varepsilon_K)\in\cD$ for sufficiently small $\varepsilon$. Moreover, in view of Lemma \ref{lem:BarS}, $\bar{u}^\eps$ is such that
		\[
		\frac{|\nabla \bar{u}^\varepsilon|_2^2}{\left(\sum_{j=1}^K\theta_j|\bar{u}_j^\varepsilon|_{2^*}^{2^*}\right)^{2/2^*}}=\inf_{u\in\cD^{1,2}(\R^N)^K\setminus\{0\}}\frac{|\nabla u|_2^2}{\left(\sum_{j=1}^K\theta_j|u_j|_{2^*}^{2^*}\right)^{2/2^*}}=\left(\sum_{j=1}^K\theta_j^{1-N/2}\right)^{2/N}S.
		\]
Recall that $\wt G(u)>0$ for $u\neq 0$ and then, taking $\varepsilon$ sufficiently small,
		\begin{equation*}\begin{split}
		c\leq J(s_\varepsilon\star \bar{u}^\eps) & \leq-\int_{\R^N} \wt G(s_\varepsilon\star \bar{u}^\eps) \,dx+ \max_{s>0}\frac{s^2}{2}\int_{\rn}|\nabla \bar{u}^\eps|^2\,dx-\frac{s^{2^*}}{2^*} \sum_{j=1}^K\theta_j\int_{\rn}|\bar{u}_j^\eps|^{2^*}\,dx\\
		& <\max_{s>0}\frac{s^2}{2}\int_{\rn}|\nabla \bar{u}^\eps|^2\,dx-\frac{s^{2^*}}{2^*} \sum_{j=1}^K\theta_j\int_{\rn}|\bar{u}_j^\eps|^{2^*}\,dx\\
		& =\frac{1}{N}\frac{|\nabla \bar{u}^\varepsilon|_2^N}{\left(\sum_{j=1}^K\theta_j|\bar{u}_j^\varepsilon|_{2^*}^{2^*}\right)^{N/2-1}}=\sum_{j=1}^K\theta_j^{1-N/2}\frac{S^{N/2}}{N}.
		\end{split}\end{equation*}
	
\textit{(ii)} If $N\ge5$, then the statement follows form \textit{(i)}, therefore we can assume $N\in\{3,4\}$. Since $u_0^1\not\in L^2(\rn)$, let $0\le\phi\in\cC_0^\infty(\rn)$ radial such that $\phi\equiv1$ in $B_1$ and $\phi\equiv0$ in $\rn\setminus B_2$, where $B_r$ stands for the closed ball centred at $0$ of radius $r$. For every $\varepsilon>0$ define
\[
u_j^\varepsilon:=\theta_j^\frac{2-N}{4}\phi u_0^\varepsilon \quad \text{and} \quad v^\varepsilon:=\frac{\bar\rho}{|u^\varepsilon|_2}(u_1^\varepsilon,\dots,u_K^\varepsilon)\in\cD,
\]
where $\bar{\rho}:=\min_{j\in\{1,\dots,K\}}\rho_j$, and recall (cf., e.g., \cite[p. 179]{Struwe}, \cite[Lemma A.1]{Soave_crit}) that
\begin{equation*}\begin{split}
|\nabla(\phi u_0^\varepsilon)|_2^2 & =S^{N/2}+O(\varepsilon^{N-2})\\
|\phi u_0^\varepsilon|_{2^*}^2 & =
\begin{cases}
S+O(\varepsilon^4) \quad \text{if } N=4\\
S^{1/2}+O(\varepsilon^2) \quad \text{if } N=3
\end{cases}\\
|\phi u_0^\varepsilon|_2^2 & =
\begin{cases}
C_4\varepsilon^2|\ln\varepsilon|+O(\varepsilon^2) \quad \text{if } N=4\\
C_3\varepsilon+O(\varepsilon^2) \quad \text{if } N=3,
\end{cases}
\end{split}\end{equation*}
where $C_N>0$ depends only on $N$ and $\phi$.
Note that
\begin{equation*}
\int_{\R^N}|\phi u_0^\varepsilon|^r\chi_{\{\phi u_0^\varepsilon\geq 1\}}\,dx\geq C\varepsilon^{N-(N/2-1)r}
\end{equation*}
for some constant $C>0$ and sufficiently small $\eps>0$, where $r\in\{p,q\}$ and $\chi_A$ stands for the characteristic function of $A$.
Indeed, let $|x|^2\le\eps\sqrt{N(N-2)}-\eps^2$. If $\eps$ is sufficiently small, then $x\in B_1$ and, consequently, $\phi(x) u_0^\varepsilon(x)=u_0^\eps(x)\geq 1$, whence
\begin{equation*}\begin{split}
\int_{\R^N}|\phi u_0^\varepsilon|^r\chi_{\{\phi u_0^\varepsilon\geq 1\}}\,dx&\geq
\int_{\left\{|x|\le\left(\eps\sqrt{N(N-2)}-\eps^2\right)^{1/2}\right\}}|u^\varepsilon_0|^r\,dx\\
&=\eps^{N-(N/2-1)r}\int_{\left\{|y|\le\left(\sqrt{N(N-2)}/\eps-1\right)^{1/2}\right\}}|u^1_0|^r\,dy
\end{split}\end{equation*}
and we conclude, since $u^1_0\in L^r(\R^N)$. Define $s_\varepsilon>0$ such that $s_\varepsilon\ast v^\varepsilon\in\cM$. In a similar way to the proof of Lemma \ref{lem:bdaw1}, for every $\delta>0$ there exists $C_\delta>0$ not depending on $\varepsilon$ such that
\[
\frac{1}{2^*}|\nabla v^\varepsilon|_2^2\le(\eta+\delta)|v^\varepsilon|_{2_N}^{2_N}+C_\delta s_\varepsilon^{2^*-2}\sum_{j=1}^K\theta_j|v_j^\varepsilon|_{2^*}^{2^*}\le(\eta+\delta)C_{N,2_N}^{2_N}|\rho|^{4/N}|\nabla v^\varepsilon|_2^2+C_\delta s_\varepsilon^{2^*-2}\sum_{j=1}^K\theta_j|v_j^\varepsilon|_{2^*}^{2^*}
\]
(note that $u\mapsto\left(\sum_{j=1}^K\theta_j|u_j|_{2^*}^{2^*}\right)^{1/2^*}$ is an equivalent norm in $L^{2^*}(\rn)^K$, i.e., taking $\delta$ sufficiently small and denoting $m:=\bigl(1/2^*-(\eta+\delta)C_{N,2_N}^{2_N}|\rho|^{4/N}\bigr)/C_\delta>0$,
\[
s_\varepsilon^{2^*-2}\ge \frac{m|\nabla v^\varepsilon|_2^2}{\sum_{j=1}^K\theta_j|v_j^\varepsilon|_{2^*}^{2^*}}=m\bar{\rho}^{2-2^*}\frac{|\nabla(\phi u_0^\eps)|_2^2|u^\eps|_2^{2^*-2}}{|\phi u_0^\eps|_{2^*}^{2^*}}.
\]
In a similar way to point \textit{(i)},
\[
c\le-\irn\wt G(s_\varepsilon\star v^\eps)\,dx+\frac1N\frac{|\nabla v^\eps|_2^N}{\left(\sum_{j=1}^K\theta_j|v_j^\eps|_{2^*}^{2^*}\right)^{N/2-1}}.
\]
There holds
\[\begin{split}
|\nabla v^\eps|_2^2 = \frac{\bar{\rho}^2|\nabla(\phi u_0^\eps)|_2^2\sum_{j=1}^K\theta_j^{1-N/2}}{|u^\eps|_2^2} \quad \text{and} \quad \sum_{j=1}^K\theta_j|v_j^\eps|_{2^*}^{2^*}= \frac{\bar{\rho}^{2^*}|\phi u_0^\eps|_{2^*}^{2^*}\sum_{j=1}^K\theta_j^{1-N/2}}{|u^\eps|_2^{2^*}},
\end{split}\]
thus, denoting $k=2$ (resp. $k=4$) if $N=3$ (resp. $N=4$),
\[\begin{split}
\frac{|\nabla v^\eps|_2^N}{\left(\sum_{j=1}^K\theta_j|v_j^\eps|_{2^*}^{2^*}\right)^{N/2-1}} & =\sum_{j=1}^K\theta_j^{1-N/2}\left(\frac{|\nabla(\phi u_0^\eps)|_2}{|\phi u_0^\eps|_{2^*}}\right)^N=\sum_{j=1}^K\theta_j^{1-N/2}\left(\frac{S^{N/2}+O(\varepsilon^{N-2})}{S^{(N-2)/2}+O(\varepsilon^k)}\right)^{N/2}\\
& =\sum_{j=1}^K\theta_j^{1-N/2}\bigl(S+O(\eps^{N-2})\bigr)^{N/2}=\sum_{j=1}^K\theta_j^{1-N/2}S^{N/2}+O(\eps^{N-2}).
\end{split}\]
Now we estimate $\int_{\rn}\wt G(s_\varepsilon\star v^\varepsilon)\,dx$ as $\varepsilon\to0^+$. From \eqref{eq:pq} and the fact, due to (A2) and (A5), that $\wt G(u)>0$ if $u\ne0$, we deduce there exists $C>0$ such that $\wt G(u)\ge C|u|^p$ if $|u|\le1$ and $\wt G(u)\ge C|u|^q$ if $|u|>1$.
\begin{equation*}\begin{split}
\int_{\rn}\wt G(s_\varepsilon\star v^\varepsilon)\,dx&\geq C s_\varepsilon^{N(p/2-1)}\int_{\R^N}|v^\varepsilon|^p\chi_{\{|s_\eps^{N/2} v^\varepsilon|\leq 1\}}\,dx
+C s_\varepsilon^{N(q/2-1)}\int_{\R^N}|v^\varepsilon|^q\chi_{\{|s_\eps^{N/2} v^\varepsilon|> 1\}}\,dx\\
&\geq C' |\phi u_0^\varepsilon|_2^{N(p/2-1)-p}\int_{\R^N}|\phi u_0^\varepsilon|^p\chi_{\{|s_\eps^{N/2} v^\varepsilon|\leq 1\}}\,dx\\
&\hspace{5mm}+C' |\phi u_0^\varepsilon|_2^{N(q/2-1)-q}\int_{\R^N}|\phi u_0^\varepsilon|^q\chi_{\{|s_\eps^{N/2} v^\varepsilon|> 1\}}\,dx\\
&\geq C' |\phi u_0^\varepsilon|_2^{(N/2-1)p-N}\int_{\R^N}|\phi u_0^\varepsilon|^p\chi_{\{|s_\eps^{N/2}v^\varepsilon|\leq 1\}}\chi_{\{\phi u_0^\varepsilon\geq 1\}}\,dx\\
&\hspace{5mm}+C' |\phi u_0^\varepsilon|_2^{(N/2-1)q-N}\int_{\R^N}|\phi u_0^\varepsilon|^q\chi_{\{|s_\eps^{N/2} v^\varepsilon|> 1\}}\chi_{\{\phi u_0^\varepsilon\geq 1\}}\,dx\\
&\geq C' \min\big\{|\phi u_0^\varepsilon|_2^{(N/2-1)p-N},|\phi u_0^\varepsilon|_2^{(N/2-1)q-N}\big\}\int_{\R^N}|\phi u_0^\varepsilon|^{\min\{p,q\}}\chi_{\{\phi u_0^\varepsilon\geq 1\}}\,dx\\
&\geq C'' |\phi u_0^\varepsilon|_2^{(N/2-1)\max\{p,q\}-N}\eps^{N-(N/2-1)\min\{p,q\}}
\end{split}\end{equation*}
as $\eps\to0^+$ because $(N/2-1)r-N<0$, $r\in\{p,q\}$,  where $C',C''>0$ are constants. There follows that
\[
c\le\sum_{j=1}^K\theta_j^{1-N/2}\frac{S^{N/2}}{N}+O(\eps^{N-2})-C''|\phi u_0^\varepsilon|_2^{(N/2-1)\max\{p,q\}-N}\eps^{N-(N/2-1)\min\{p,q\}}.
\]
If $N=3$, then
\[\begin{split}
|\phi u_0^\varepsilon|_2^{(N/2-1)\max\{p,q\}-N}\eps^{N-(N/2-1)\min\{p,q\}} & =\frac{\eps^{3-\min\{p,q\}/2}}{(C_3\eps)^{3/2-\max\{p,q\}/4}+O(\eps^{3-\max\{p,q\}/2})}\\
& \ge C\eps^{(3+\max\{p,q\}/2-\min\{p,q\})/2}
\end{split}\]
and $0<(3+\max\{p,q\}/2-\min\{p,q\})/2<1=N-2$. If $N=4$, then
\[\begin{split}
|\phi u_0^\varepsilon|_2^{(N/2-1)\max\{p,q\}-N}\eps^{N-(N/2-1)\min\{p,q\}} & =\frac{\eps^{4-\min\{p,q\}}}{(\sqrt{C_4|\ln\eps|}\,\eps)^{4-\max\{p,q\}}+O(\eps^{4-\max\{p,q\}})}\\
& \ge C\eps^{|p-q|}|\ln\eps|^{\max\{p,q\}/2-2}
\end{split}\]
and $|p-q|<2=N-2$, $\max\{p,q\}-4\le0$, and $|p-q|>0$ or $\max\{p,q\}-4<0$. Either way, $O(\varepsilon^{N-2})-C''|\phi u_0^\varepsilon|_2^{(N/2-1)\max\{p,q\}-N}\eps^{N-(N/2-1)\min\{p,q\}}<0$  for sufficiently small $\varepsilon$ and
\[
c<\sum_{j=1}^K\theta_j^{1-N/2}\frac{S^{N/2}}{N}.\qedhere
\]
\end{proof}

Since there exist nonlinearities that do not satisfy the assumptions of Proposition \ref{lem:cless} \textit{(ii)}, we provide other sufficient conditions for \eqref{eq:groundlevel} to hold.

	\begin{Lem}\label{lem:clessR}
	Assume that (A1)--(A5) are satisfied and $\theta\in(0,\infty)^K$.\\
	(a) If $K=1$, $\eta=0$, and $\displaystyle\lim_{u\to0}\wt G(u)/|u|^{2^*}=\infty$, then there exists $\rho_0>0$ such that \eqref{eq:groundlevel} is satisfied provided that $\rho>\rho_0$.\\
	(b) If \eqref{eq:eta2} holds and  $\displaystyle\lim_{|u|\to\infty}\wt G(u)/|u|^{2_N}=\infty$, then there exists $\theta_0>0$ such that \eqref{eq:groundlevel} is satisfied provided that $\theta_i<\theta_0$ for some $i\in\{1,...,K\}$.
	\end{Lem}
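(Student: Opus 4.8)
The plan in both cases is to exhibit a single competitor in $\cM\cap\cD$ whose energy is strictly below the right-hand side of \eqref{eq:groundlevel}, refining the projected Aubin--Talenti competitors of Proposition~\ref{lem:cless}. For (a) I would first note that $\eta=0$ makes \eqref{eq:eta2} hold for every $\rho>0$, so enlarging $\rho$ is legitimate. The driving observation is that $\lim_{u\to0}\wt G(u)/|u|^{2^*}=\infty$ means that for every $M>0$ there is $\delta>0$ with $\wt G(u)\ge M|u|^{2^*}$ whenever $0<|u|\le\delta$. Feeding a profile of amplitude $\le\delta$ into the energy therefore behaves as if the Sobolev-critical coefficient $\theta_1$ were replaced by $\theta_1+2^*M$, which forces the corresponding projected maximum down to $0$ as $M\to\infty$, hence below $\tfrac1N S^{N/2}\theta_1^{1-N/2}$.

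Concretely, when $N\ge5$ I would take the dilated instanton $\bar u^\eps:=\theta_1^{(2-N)/4}u_0^\eps$ with $\eps$ large: its sup-norm tends to $0$ while $|\bar u^\eps|_2$ grows like $\eps$, so it is admissible precisely when $\rho$ is large, and after projecting onto $\cM$ the mechanism above yields $J<\tfrac1N S^{N/2}\theta_1^{1-N/2}$; one then sets $\rho_0:=|\bar u^\eps|_2$, since $s\star$ preserves the $L^2$-norm. When $N\in\{3,4\}$ the instanton is not in $L^2$ and must be truncated, after which the small-amplitude gain survives only on the tail of $\phi u_0^\eps$ and has to be balanced against the critical truncation error $O(\eps^{N-2})$ as $\eps\to0^+$. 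Establishing that $\int_{\rn}\wt G(s_\eps\star\bar u^\eps)\,dx$ is of strictly larger order than $\eps^{N-2}$ is the main obstacle of (a): it is a Brezis--Nirenberg-type estimate, and the hypothesis $\lim_{u\to0}\wt G/|u|^{2^*}=\infty$ is exactly what wins it, since it encodes the borderline growth (the exponent $p=2^*$, possibly with a logarithmic factor) that is explicitly excluded by Proposition~\ref{lem:cless}~\textit{(ii)}.

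For (b) the right-hand side of \eqref{eq:groundlevel} equals $\tfrac1N S^{N/2}\bigl(\theta_i^{1-N/2}+\sum_{j\ne i}\theta_j^{1-N/2}\bigr)$, which diverges to $+\infty$ as $\theta_i\to0^+$ because $1-N/2<0$, the remaining $\theta_j$ being frozen. It therefore suffices to bound $c=\inf_{\cM\cap\cD}J$ uniformly as $\theta_i\to0^+$. I would fix once and for all a test function $w\in\cD$ whose only nonzero component is the $i$-th one, say $w_i=\rho_i\psi$ with $\psi\in\cC_0^\infty(\rn)$ radial and $|\psi|_2=1$, and $w_j=0$ for $j\ne i$; then $\int_{\rn}H(w)\,dx\ge\int_{\rn}\wt H(w)\,dx>0$ by (A2) and (A5), so Lemma~\ref{lem:phi} provides $s_w>0$ with $s_w\star w\in\cM$ and $c\le J(s_w\star w)$.

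The crux of (b) is that $J(s_w\star w)$ stays bounded as $\theta_i\to0^+$, and this is where $\lim_{|u|\to\infty}\wt G(u)/|u|^{2_N}=\infty$ is indispensable. The projection $s_w$ solves $|\nabla w|_2^2=\tfrac N2\int_{\rn}H(s^{N/2}w)/s^{N+2}\,dx$, and as $\theta_i\to0$ the critical contribution $\tfrac{2\theta_i}{N}\int_{\rn}|w_i|^{2^*}\,dx$ vanishes, so the limiting projection is governed by $\wt H$ alone. Since $\wt H\ge\tfrac4N\wt G$ by (A5), the hypothesis forces $\int_{\rn}\wt H(s^{N/2}w)/s^{N+2}\,dx\to\infty$ as $s\to\infty$; this growth strictly faster than $|u|^{2_N}$ is essential, because for $\wt H\sim|u|^{2_N}$ that integral would stay bounded (the exponent being exactly $0$) and the limiting projection could escape to $+\infty$, making $J(s_w\star w)$ blow up. Granting this, $s_w$ remains bounded, whence $J(s_w\star w)\le C_0$ with $C_0$ independent of $\theta_i$, and it is enough to pick $\theta_0>0$ so small that $\tfrac1N S^{N/2}\theta_i^{1-N/2}>C_0$ for all $\theta_i<\theta_0$, which yields \eqref{eq:groundlevel}.
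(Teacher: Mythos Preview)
Your treatment of part \textit{(b)} is correct and matches the paper's argument in spirit: fix a test function in $\cD$, project onto $\cM$, and obtain an upper bound on $c$ independent of $\theta_i$, while the right-hand side of \eqref{eq:groundlevel} blows up as $\theta_i\to0^+$. The paper streamlines your computation by simply discarding the (nonpositive) Sobolev-critical term, getting directly
\[
c\le\max_{s>0}\left(\frac{s^2}{2}\irn|\nabla u_0|^2\,dx-\irn\frac{\wt G(s^{N/2}u_0)}{s^N}\,dx\right),
\]
which is finite by Lemma~\ref{lem:phi} applied with $\theta=0$ (this is where the hypothesis $\wt G(u)/|u|^{2_N}\to\infty$ enters, as (A2) in that case). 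This avoids tracking $s_w(\theta_i)$ altogether, but your route reaches the same conclusion.

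Part \textit{(a)} has a genuine gap. Your plan for $N\in\{3,4\}$ is to take $\eps\to0^+$ and run a Brezis--Nirenberg estimate on $\irn\wt G(s_\eps\star\bar u^\eps)\,dx$, hoping it beats the truncation error $O(\eps^{N-2})$. But this argument never uses the largeness of $\rho$: the truncated instanton has $|\phi u_0^\eps|_2\to0$, so it lies in $\cD$ for \emph{every} $\rho$, and your competitor would yield \eqref{eq:groundlevel} for all $\rho$ if it worked. It does not work: the example $\wt g(u)=\min\{|u|^{4-\eps_0},|u|^{4/3}\}u$ with $N=3$ (given right after the lemma) satisfies $\wt G(u)/|u|^{2^*}\to\infty$ at $0$, yet the corresponding exponents $p=6-\eps_0$, $q=10/3$ violate the sharp condition $\max\{p,q\}/2-\min\{p,q\}<-1$ of Proposition~\ref{lem:cless}\,\textit{(ii)}; the Brezis--Nirenberg gain is only of order $\eps^{(3+p/2-q)/2}$, and this exponent exceeds $1=N-2$ for small $\eps_0$. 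So the ``main obstacle'' you identify is not merely technical---it is an actual obstruction to your approach.

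The paper's argument for \textit{(a)} is entirely different and does not touch Aubin--Talenti profiles. One fixes any $u\in L^\infty(\rn)$ with $|u|_2=1$, sets $u_n:=\rho_n u$ along $\rho_n\to\infty$, and projects onto $\cM$ via $v_n:=s_n\star u_n$. From the $\cM$-equation one first extracts $s_n^{N/2}\rho_n\to0$, and then the hypothesis $\wt G(u)/|u|^{2^*}\to\infty$ (equivalently $H(s)\ge\eps^{-1}|s|^{2^*}$ for small $|s|$) upgrades this to $s_n\rho_n\to0$. Since $J(v_n)\le\frac12(s_n\rho_n)^2|\nabla u|_2^2$, this gives $c(\rho_n)\to0$, which is eventually below the fixed positive number $\frac1N S^{N/2}\theta_1^{1-N/2}$. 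The large~$\rho$ enters precisely through the amplitude $\rho_n$ of the test function, which is what your truncated-instanton scheme was missing.
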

	\begin{proof}
	\textit{(a)} We 
	prove that $c\to0$ as $\rho\to\infty$ (note that \eqref{eq:eta2} is satisfied for every $\rho>0$ because $\eta=0$). Let $\rho_n \to \infty$ and take $u \in L^\infty(\R^N)$ such that $|u|_2=1$. Without loss of generality we may assume that $\rho_n > 1$ and
	define $u_n := \rho_n u$ so that $|u_n|_2=\rho_n$. From Lemma \ref{lem:phi} there exists $s_n >0$ such that $v_n :=s_n^{N/2} u_n (s_n \cdot) \in \cM$. Moreover, $|v_n|_2=|u_n|_2$, hence
	$$
	0 < \inf\{J(v) : v\in\cM, \, |v|_2\le\rho_n\} \leq J(v_n) \leq \frac12 \int_{\R^N} |\nabla v_n|^2 \, dx = \frac12 (s_n\rho_n)^2  \int_{\R^N} |\nabla u|^2 \, dx,
	$$
	so it is enough to show that $s_n \rho_n \to 0$. Note that
$$
\left(s_n\rho_n\right)^2 \int_{\R^N} |\nabla u|^2 \, dx = \int_{\R^N} |\nabla v_n|^2 \, dx = \frac{N}{2} \int_{\R^N} H(v_n) \, dx = \frac{N}{2} s_n^{-N} \int_{\R^N} H(s_n^{N/2} \rho_n u) \, dx
$$
	and
$$
\int_{\R^N} |\nabla u|^2 \, dx = \frac{N}{2} s_n^{-N-2} \rho_n^{-2} \int_{\R^N} H(s_n^{N/2} \rho_n u) \, dx = \frac{N}{2} \rho_n^{4/N} \int_{\R^N} \frac{H(s_n^{N/2} \rho_n u)}{ \left|  s_n^{N/2} \rho_n u \right|^{2_N} } |u|^{2_N} \, dx.
$$
There follows that
	$$
	\lim_n\int_{\R^N} \frac{H(s_n^{N/2} \rho_n u)}{ \left|  s_n^{N/2} \rho_n u \right|^{2_N} } |u|^{2_N} \, dx = 0,
	$$
	whence $s_n^{N/2} \rho_n \to 0$. Fix $\varepsilon > 0$. From (A5) and the fact that $\lim_{t\to0}G(t)/|t|^{2^*}=\infty$, there follows that
	$$
	H(s) \geq \frac{4}{N} G(s) \geq \eps^{-1} |s|^{2^*}
	$$
	for sufficiently small $|s|$. Then, taking into account that $u \in L^\infty (\R^N)$, for sufficiently large $n$
\begin{equation*}\begin{split}
		\int_{\R^N} |\nabla u|^2 \, dx & = \frac{N}{2} s_n^{-N-2} \frac{1}{\rho_n^2} \int_{\R^N} H(s_n^{N/2} \rho_n^2 u) \, dx \geq \eps^{-1} \frac{N}{2} s_n^{-N-2} \frac{1}{\rho_n^2} \left| s_n^{N/2} \rho_n \right|^{2^*} |u|_{2^*}^{2^*}\\
		& = \eps^{-1} \frac{N}{2} (s_n \rho_n)^{\frac{4}{N-2}}|u|_{2^*}^{2^*} 
\end{split}\end{equation*}
	and $s_n \rho_n \to 0$ as $n \to \infty$, which completes the proof.
	
	\textit{(b)} Take any $u_0\in\cD\setminus\{0\}$ and note that \eqref{eq:eta} holds. In view of Lemma \ref{lem:phi} there exists $s_0>0$ such that $s_0\star u_0\in\cM$ and
	\begin{equation*}
		c\leq  J(s_0\star u_0)\leq \max_{s>0}J(s\star u_0)\leq \max_{s>0}\frac{s^2}{2}\int_{\rn}|\nabla u_0|^2\,dx-\int_{\rn}\frac{\wt G(s^{N/2}u_0)}{s^N}\,dx.
	\end{equation*}
Observe that the latter expression is finite due to Lemma \ref{lem:phi} with $\theta=0$.
Hence we can take $\theta_0>0$ so small that, if $\theta_i<\theta_0$, then $\sum_{j=1}^K\theta_j^{1-N/2}S^{N/2}/N\geq \theta_i^{1-N/2}S^{N/2}/N$ is greater than the right-hand side of the formula above.
	\end{proof}
We give explicit examples of nonlinearities that do not satisfy the assumptions of Proposition \ref{lem:cless}. Let $N=3$ and $\eps>0$ be sufficiently small. If $\wt g(u)=\wt g_1(u)=\min\{|u|^{4-\eps},|u|^{4/3}\}u$ and if $\theta=\theta_1$ is not sufficiently small, then we can use Lemma \ref{lem:clessR} (a) provided that $\rho=\rho_1$ is sufficiently large, but not part (b). If $G$ is of the form \eqref{eq:Gsp} and
\begin{equation}\label{Ex33}
\wt g_i(u)=\min\{|u|^{4},|u|^{4/3+\eps}\}u
\end{equation}
and if $K=2$ or $\rho$ is not sufficiently large, then we can use Lemma \ref{lem:clessR} (b) provided that $\theta_i$ is sufficiently small for some $i\in\{1\dots,K\}$, but not part (a).

In view of Lemma \ref{lem:coerc}, any minimizing sequence $(u^{(n)})\subset \cM\cap\cD$ such that $J(u^{(n)})\to c>0$ is bounded. By the standard concentration-compactness argument \cite{Lions84}, $u^{(n)}\weakto \tilde u$ for some $\tilde u\neq 0$ up to a subsequence and up to translations. It is not clear, however, if $J(\tilde u)=c$ or $\tilde u\in \cM\cap \D$. Note that we can find $R>0$ such that  $ \tilde u(R\cdot)\in\cM$ and in order to ensure that $J(\tilde u)=c$ and $\tilde u\in\cD$ we need to know that $R\geq 1$. The latter crucial condition requires the profile decomposition analysis of $(u^{(n)})$ provided by the following lemma.

\begin{Lem}\label{lem:split}
	Let $(u^{(n)})\subset H^1(\rn)^K$ be bounded. Then there exist sequences $(\tilde{u}^{(i)})_{i=0}^\infty\subset H^1(\rn)^K$ and $(y^{(i,n)})_{i=0}^\infty\subset\rn$ such that $y^{(0,n)}=0$, $\lim_n|y^{(i,n)}-y^{(j,n)}|=0$ if $i\ne j$, and for every $i\ge0$ and every $F\colon\rn\to\R$ of class $\cC^1$ such that
	\[
	\lim_{u\to0}\frac{F(u)}{|u|^2}=\lim_{|u|\to\infty}\frac{F(u)}{|u|^{2^*}}=0
	\]
	there holds (up to a subsequence)
	\begin{eqnarray}
		u^{(n)}(\cdot+y^{(i,n)})&\rightharpoonup&\tilde{u}^{(i)}\text{ as }n\to\infty \label{eq:Me1}\\
		\lim_n\int_{\rn}|\nabla u^{(n)}|^2\,dx&=&\sum_{j=0}^i\int_{\rn}|\nabla\tilde{u}^{(j)}|^2\,dx+\lim_n\int_{\rn}|\nabla v^{(i,n)}|^2\,dx \label{eq:Me2}\\
		\limsup_n\int_{\rn}F(u^{(n)})\,dx&=&\sum_{i=0}^\infty\int_{\rn}F(\tilde{u}^{(i)})\,dx, \label{eq:Me3}
	\end{eqnarray}
	where $v^{(i,n)}(x):=u^{(n)}(x)-\sum_{j=0}^{i}\tilde{u}^{(j)}(x-y^{(j,n)})$.
\end{Lem}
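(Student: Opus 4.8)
The plan is to build the profiles $\tilde u^{(i)}$ and the centers $y^{(i,n)}$ by an iterative concentration--compactness extraction, and then to establish the two splittings independently: \eqref{eq:Me2} is an asymptotic-orthogonality (Hilbert-space) identity, whereas \eqref{eq:Me3} is an iterated Brezis--Lieb identity. This is the $\R^K$-valued adaptation of the scalar profile decomposition behind \cite{BiegMed,JeanjeanLuNorm}, which I would run on the whole vector sequence at once. Throughout I pass to a single subsequence, chosen once and independently of $F$ (a diagonal argument over $i$), along which $|\nabla u^{(n)}|_2^2$ converges and all the weak and a.e. limits below exist.

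\emph{Extraction.} Set $y^{(0,n)}:=0$ and let $\tilde u^{(0)}$ be the weak $H^1$-limit of $u^{(n)}$, so $v^{(0,n)}:=u^{(n)}-\tilde u^{(0)}\weakto0$. Given $v^{(i-1,n)}\weakto0$, put $\delta_i:=\lim_n\sup_{y\in\rn}\int_{B(y,1)}|v^{(i-1,n)}|^2\,dx$. If $\delta_i=0$ I stop and set all further profiles to $0$; Lions' lemma \cite{Lions84} then gives $v^{(i-1,n)}\to0$ in $L^p(\rn)^K$ for $p\in(2,2^*)$. If $\delta_i>0$ I choose $y^{(i,n)}$ with $\int_{B(y^{(i,n)},1)}|v^{(i-1,n)}|^2\,dx\ge\delta_i/2$ and let $\tilde u^{(i)}$ be the weak limit of $v^{(i-1,n)}(\cdot+y^{(i,n)})$; by compactness of $H^1(B(0,2))\hookrightarrow L^2(B(0,2))$ one gets $\int_{B(0,1)}|\tilde u^{(i)}|^2\,dx\ge\delta_i/2>0$, and $v^{(i,n)}:=v^{(i-1,n)}-\tilde u^{(i)}(\cdot-y^{(i,n)})$ telescopes to the remainder in the statement. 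A simultaneous induction shows $v^{(i,n)}(\cdot+y^{(j,n)})\weakto0$ for all $j\le i$ and forces $\lim_n|y^{(i,n)}-y^{(j,n)}|=\infty$ for $i\ne j$ (otherwise the translate of an already-subtracted profile would survive and force $\tilde u^{(i)}=0$); consequently $\tilde u^{(i)}$ is also the weak limit of $u^{(n)}(\cdot+y^{(i,n)})$, which is \eqref{eq:Me1}.

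\emph{Orthogonality.} Writing $\nabla u^{(n)}=\nabla v^{(i,n)}+\sum_{j=0}^i\nabla\tilde u^{(j)}(\cdot-y^{(j,n)})$ and expanding the square, the mixed terms $\int_{\rn}\nabla\tilde u^{(j)}(\cdot-y^{(j,n)})\cdot\nabla\tilde u^{(l)}(\cdot-y^{(l,n)})\,dx$ with $j\ne l$ vanish since $|y^{(j,n)}-y^{(l,n)}|\to\infty$, while $\int_{\rn}\nabla v^{(i,n)}\cdot\nabla\tilde u^{(j)}(\cdot-y^{(j,n)})\,dx=\int_{\rn}\nabla v^{(i,n)}(\cdot+y^{(j,n)})\cdot\nabla\tilde u^{(j)}\,dx\to0$ because $v^{(i,n)}(\cdot+y^{(j,n)})\weakto0$; this gives \eqref{eq:Me2} (and the analogous $L^2$-identity). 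From \eqref{eq:Me2}, $\sum_{j=0}^i|\nabla\tilde u^{(j)}|_2^2\le\lim_n|\nabla u^{(n)}|_2^2$ for every $i$, and together with the $L^2$-identity this yields $\sum_j\|\tilde u^{(j)}\|^2<\infty$; hence $\|\tilde u^{(i)}\|\to0$ and $\delta_i\le2\int_{B(0,1)}|\tilde u^{(i)}|^2\,dx\to0$.

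\emph{Nonlinear splitting.} Fix an admissible $F$. A finite induction on $i$ — applying the Brezis--Lieb lemma \cite{BrezisLiebCMP84} to $v^{(i-1,n)}(\cdot+y^{(i,n)})=\tilde u^{(i)}+v^{(i,n)}(\cdot+y^{(i,n)})$ (with $v^{(i,n)}(\cdot+y^{(i,n)})\weakto0$ and $\to0$ a.e.) and using translation invariance — yields $\lim_n\bigl(\int_{\rn}F(u^{(n)})\,dx-\int_{\rn}F(v^{(i,n)})\,dx\bigr)=\sum_{j=0}^i\int_{\rn}F(\tilde u^{(j)})\,dx$. Since $\delta_i\to0$, Lions' lemma gives $\lim_i\limsup_n|v^{(i,n)}|_p=0$ for $p\in(2,2^*)$; the growth of $F$ yields $|F(u)|\le\eps(|u|^2+|u|^{2^*})+C_\eps|u|^p$ for every $\eps>0$, so the uniform $H^1$-bound on $v^{(i,n)}$ forces $\lim_i\limsup_n\int_{\rn}F(v^{(i,n)})\,dx=0$, and taking $\limsup_n$ then $i\to\infty$ gives \eqref{eq:Me3}. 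I expect this last step to be the main obstacle: the \emph{uniform-in-$n$} smallness of the remainder $\int_{\rn}F(v^{(i,n)})\,dx$ for large $i$ is exactly where the mass summability $\sum_j\|\tilde u^{(j)}\|^2<\infty$ (forcing $\delta_i\to0$) must be combined with the two-sided growth control on $F$ through the double limit $\lim_i\limsup_n$; the bookkeeping guaranteeing $\lim_n|y^{(i,n)}-y^{(j,n)}|=\infty$ and $\tilde u^{(i)}\ne0$ simultaneously is the other delicate point.
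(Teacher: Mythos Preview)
Your proposal is correct and follows precisely the standard iterative concentration--compactness scheme that the paper invokes by reference to the scalar case \cite[Theorem~1.4]{NonradMed}; the vector-valued adaptation you outline is exactly what ``argue similarly as in the case $K=1$'' means here. (Note that you correctly establish $\lim_n|y^{(i,n)}-y^{(j,n)}|=\infty$ for $i\neq j$, which is what is actually needed and used downstream---the ``$=0$'' in the stated lemma is a typo.)
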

\begin{proof}
	We argue similarly as in the case $K=1$ provided in \cite[Theorem 1.4]{NonradMed}.
\end{proof}

	\begin{Lem}\label{lem:infismin}
		If (A1)--(A5) and \eqref{eq:eta2} hold and either $\theta=0$ or $\theta\in(0,\infty)^K$ and \eqref{eq:groundlevel} is satisfied, then $c$ is attained.
	\end{Lem}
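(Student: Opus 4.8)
The plan is to take a minimizing sequence $(u^{(n)})\subset\cM\cap\cD$ with $J(u^{(n)})\to c$, which is bounded in $H^1(\rn)^K$ by Lemma~\ref{lem:coerc}, and to run the profile decomposition of Lemma~\ref{lem:split}, producing profiles $(\tilde u^{(i)})_{i\ge0}$, shifts $(y^{(i,n)})$, and remainders $v^{(i,n)}$. The crucial structural fact is that $J=\wt J$ on $\cM$, where
\[
\wt J(u):=\int_{\rn}\Big(\frac N4 H(u)-G(u)\Big)\,dx=\int_{\rn}\wt F(u)\,dx+\frac1N\sum_{j=1}^K\theta_j|u_j|_{2^*}^{2^*},\qquad \wt F:=\frac N4\wt H-\wt G.
\]
By (A5) the integrand $\wt F$ is nonnegative, and by (A1), (A3), (A5) it obeys $\lim_{u\to0}\wt F(u)/|u|^2=\lim_{|u|\to\infty}\wt F(u)/|u|^{2^*}=0$; the same holds for $\wt H$. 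Hence \eqref{eq:Me3} applies to both $\wt F$ and $\wt H$. For the Sobolev-critical term I would use the Brezis--Lieb splitting $\int_{\rn}|u^{(n)}_j|^{2^*}\,dx\to\sum_{i\ge0}\int_{\rn}|\tilde u^{(i)}_j|^{2^*}\,dx+\sigma_j$ with concentration defects $\sigma_j\ge0$, setting $\Sigma:=\sum_{j=1}^K\theta_j\sigma_j$ and $\tau:=\lim_i\lim_n\int_{\rn}|\nabla v^{(i,n)}|^2\,dx\ge0$ (when $\theta=0$ both $\Sigma$ and the critical term are absent).

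Plugging these into \eqref{eq:Me2} and into the constraint $M(u^{(n)})=0$, together with $H=\wt H+\frac2N\sum_{j=1}^K\theta_j|u_j|^{2^*}$, I would obtain the two bookkeeping identities
\[
c=\sum_{i\ge0}\wt J(\tilde u^{(i)})+\frac1N\Sigma,\qquad \sum_{i\ge0}M(\tilde u^{(i)})=\Sigma-\tau,
\]
where each $\wt J(\tilde u^{(i)})\ge0$, so in particular $\wt J(\tilde u^{(i)})\le c$ for every $i$. Applying the weighted Sobolev inequality of Lemma~\ref{lem:BarS} to $v^{(i,n)}$ and passing to the limit gives $\tau\ge\big(\sum_{j=1}^K\theta_j^{1-N/2}\big)^{2/N}S\,\Sigma^{2/2^*}$. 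The heart of the argument is the claim that \emph{at least one nonzero profile $\tilde u^{(i_0)}$ satisfies $M(\tilde u^{(i_0)})\le0$}. If this failed, then either all profiles vanish --- forcing $c=\frac1N\Sigma$ and $\tau=\Sigma$ --- or every nonzero profile has $M>0$, forcing $\Sigma-\tau=\sum_{i\ge0}M(\tilde u^{(i)})>0$; in either case $\Sigma>0$ (otherwise $c=0$ or $\tau<0$, both impossible), and the Sobolev bound yields $\Sigma\ge\big(\sum_{j=1}^K\theta_j^{1-N/2}\big)^{N/2}S^{N/2}$, whence $c\ge\frac1N\Sigma\ge\frac1N S^{N/2}\sum_{j=1}^K\theta_j^{1-N/2}$, contradicting \eqref{eq:groundlevel}.

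Once such a profile $\tilde u^{(i_0)}\ne0$ with $M(\tilde u^{(i_0)})\le0$ is found, I would finish as follows. Being a weak $H^1$-limit of translates of $u^{(n)}\in\cD$, it satisfies $|\tilde u^{(i_0)}_j|_2\le\liminf_n|u^{(n)}_j|_2\le\rho_j$, so $\tilde u^{(i_0)}\in\cD$. Let $R:=R_{\tilde u^{(i_0)}}>0$ so that $w:=\tilde u^{(i_0)}(R\,\cdot)\in\cM$; the inequality $M(\tilde u^{(i_0)})\le0$ is precisely $R\ge1$, hence $|w_j|_2=R^{-N/2}|\tilde u^{(i_0)}_j|_2\le\rho_j$ and $w\in\cM\cap\cD$, giving $J(w)\ge c$. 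Since $\wt J(w)=R^{-N}\wt J(\tilde u^{(i_0)})$ and $\wt J(w)=J(w)$ on $\cM$, we get $\wt J(\tilde u^{(i_0)})=R^N J(w)\ge R^N c\ge c$, while the first identity forces $\wt J(\tilde u^{(i_0)})\le c$. Therefore $\wt J(\tilde u^{(i_0)})=c$, which compels $R=1$ and $J(w)=c$; thus $w=\tilde u^{(i_0)}\in\cM\cap\cD$ attains $c$.

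The main obstacle is the step isolating a profile with $M\le0$: in the Sobolev-critical regime one must prevent energy from leaking into a concentrating bubble, and this is exactly where \eqref{eq:groundlevel}, the sharp weighted Sobolev constant of Lemma~\ref{lem:BarS}, and the bookkeeping of the defects $\Sigma,\tau$ enter decisively. When $\theta=0$ this obstruction is absent and the identity $\sum_{i\ge0}M(\tilde u^{(i)})=-\tau\le0$ immediately yields a nonzero profile with nonpositive $M$.
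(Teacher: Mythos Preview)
Your argument is correct and follows a genuinely different route from the paper's. The paper proceeds one profile at a time: it first shows $I\ne\emptyset$ (Claim~1), then establishes for each $i\in I$ a dichotomy (Claim~2) --- either the recentred sequence $u^{(n)}(\cdot+y^{(i,n)})$ converges strongly in $\cD^{1,2}(\rn)^K$ or $M(\tilde u^{(i)})<0$ --- and finally rules out $M(\tilde u^{(i)})<0$ via a strict inequality using $R>1$ and Fatou's lemma, thereby forcing strong convergence of the minimizing sequence to a minimizer. Your approach instead packages everything globally through the two bookkeeping identities $c=\sum_i\wt J(\tilde u^{(i)})+\frac1N\Sigma$ and $\sum_i M(\tilde u^{(i)})=\Sigma-\tau$, together with the defect inequality $\tau\ge\bar S\,\Sigma^{2/2^*}$, to locate a nonzero profile $\tilde u^{(i_0)}$ with $M(\tilde u^{(i_0)})\le0$; the scaling identity $\wt J\bigl(u(R\cdot)\bigr)=R^{-N}\wt J(u)$ combined with $\wt J(\tilde u^{(i_0)})\le c$ then pins down $R=1$ and $J(\tilde u^{(i_0)})=c$ directly, without ever needing to show $M(\tilde u^{(i_0)})<0$ is impossible.

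Two remarks. First, your identities rely on an iterated Brezis--Lieb splitting $\lim_n|u^{(n)}_j|_{2^*}^{2^*}=\sum_{k\le i}|\tilde u^{(k)}_j|_{2^*}^{2^*}+\lim_n|v^{(i,n)}_j|_{2^*}^{2^*}$ for the critical term; this is standard and follows from the a.e.\ convergence implicit in the profile decomposition, but it is not part of Lemma~\ref{lem:split} as stated, so you should say a word about it. Second, the paper's route yields, as a byproduct, strong $\cD^{1,2}$-convergence (up to translation) of the minimizing sequence itself, and this extra compactness is reused verbatim in the proof of Proposition~\ref{Prop:GSE}; your argument delivers the existence of a minimizer more cheaply but does not give this additional information.
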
	
	\begin{proof}
		Let $(u^{(n)})\subset\cM\cap\cD$ such that $\lim_nJ(u^{(n)})=c$. Then $(u^{(n)})$ is bounded due to Lemma \ref{lem:coerc} and, in view of Lemma \ref{lem:split}, we find $(\tilde{u}^{(i)})_{i=0}^\infty\subset H^1(\rn)^K$ and $(y_n^{(i,n)})_{i=0}^\infty\subset\rn$ such that \eqref{eq:Me1}--\eqref{eq:Me3} hold. Let $I:=\{i\ge0:\tilde{u}^{(i)}\ne0\}$. 
		
		Suppose that $\theta\in(0,\infty)^K$ and \eqref{eq:groundlevel} is satisfied. 
		
		{\em Claim 1.} $I\neq\emptyset$. By contradiction
		suppose that $\tilde{u}^{(i)}=0$ for every $i\ge0$. Then
		$$\int_{\R^N} |\nabla u^{(n)}|^2 \, dx=\frac{N}{2} \int_{\R^N} H(u^{(n)}) \, dx 
		=\frac{N}{2} \int_{\R^N} \wt H(u^{(n)}) \, dx+\sum_{j=1}^K\theta_j \int_{\R^N} |u^{(n)}_j|^{2^*} \, dx.$$
		Observe that (A1), (A3), and (A5) imply that 
		\[
		\lim_{u\to0}\frac{\wt H(u)}{|u|^2}=\lim_{|u|\to\infty}\frac{\wt H(u)}{|u|^{2^*}}=0
		\]
		and
			\begin{equation}\label{eq:contradict1}
			o(1)+\int_{\R^N} |\nabla u^{(n)}|^2 \, dx=\sum_{j=1}^K\theta_j\int_{\R^N} |u^{(n)}_j|^{2^*} \, dx.
		\end{equation}
		For the sake of simplicity, let us denote $\bar{S}:=\left(\sum_{j=1}^K\theta_j^{1-N/2}\right)^{2/N}S$, cf. Appendix \ref{AppB}. Then
		\begin{equation*}
o(1)+\int_{\R^N} |\nabla u^{(n)}|^2 \, dx\leq \bar{S}^{-2^*/2}\left(\int_{\R^N} |\nabla u^{(n)}|^{2} \, dx\right)^{2^*/2}.
		\end{equation*}
	Passing to a subsequence we set $\nu:=\lim_n\int_{\R^N} |\nabla u^{(n)}|^2 \, dx>0$ from Lemma \ref{lem:bdaw1} and we get $\nu^{2/(N-2)}\ge \bar{S}^{N/(N-2)}$. Then
	\begin{equation}\label{eq:ONE}
	c=\lim_nJ(u^{(n)})=\lim_nJ(u^{(n)})-\frac{1}{2^*}M(u^{(n)})=\frac1N\nu\geq \frac1N \bar{S}^{N/2},
	\end{equation}
so we obtain a contradiction and $I\neq \emptyset$.
		
		{\em Claim 2.} For every $i\in I$ there holds $u^{(n)}(\cdot+y^{(i,n)})\to \tilde{u}^{(i)}$ in $\cD^{1,2}(\R^N)^K$ or
		$\int_{\R^N} |\nabla \tilde{u}^{(i)}|^2 \, dx<\frac{N}{2}\int_{\mathbb{R}^N}H(\tilde{u}^{(i)})\, dx$.
		Suppose that there exists $i\in I$ such that $\nu:=\lim_n\int_{\R^N} |\nabla v^{(n)}|^{2} \, dx>0$ (passing to a subsequence) and the reverse inequality holds, where $v^{(n)}:=u^{(n)}(\cdot+y^{(i,n)})-\tilde{u}^{(i)}$. 	By Vitali's convergence theorem 
		\begin{eqnarray*}
			\int_{\mathbb{R}^N}\big(H(u^{(n)})-H(v^{(n)})\big)\, dx
			&=&\int_{\mathbb{R}^N}\int_0^1 -\frac{d}{ds}H(u^{(n)}-s\tilde{u}^{(i)})\, ds\,dx\\\nonumber
			&=&\int_{\mathbb{R}^N}\int_0^1 h(u^{(n)}-s\tilde{u}^{(i)})\tilde{u}^{(i)}\,ds\, dx\\\nonumber
			&\rightarrow& \int_0^1 \int_{\mathbb{R}^N} h(\tilde{u}^{(i)}-s\tilde{u}^{(i)})\tilde{u}^{(i)}\,dx\,ds\\\nonumber
			&=&\int_{\mathbb{R}^N}\int_0^1 -\frac{d}{ds}H(\tilde{u}^{(i)}-s\tilde{u}^{(i)})\, ds\, dx\\
			&=&\int_{\mathbb{R}^N}H(\tilde{u}^{(i)})\, dx\nonumber
		\end{eqnarray*}
		as $n\to\infty$. Again, passing to a subsequence,
		$$\int_{\R^N} |\nabla v^{(n)}|^{2} \, dx+\int_{\R^N} |\nabla \tilde{u}^{(i)}|^2 \, dx=\frac{N}{2}\Big(\int_{\mathbb{R}^N}H(v^{(n)})\, dx+\int_{\mathbb{R}^N}H(\tilde{u}^{(i)})\, dx\Big)+o(1)$$
		and, since $\int_{\R^N} |\nabla \tilde{u}^{(i)}|^2 \, dx\geq \frac{N}{2}\int_{\mathbb{R}^N}H(\tilde{u}^{(i)})\, dx$, we obtain
		\begin{equation}\label{eq:Rn}
		\int_{\R^N} |\nabla v^{(n)}|^{2} \, dx\leq \frac{N}{2}\int_{\mathbb{R}^N}H(v^{(n)})\, dx+o(1)
		\end{equation}
		and define $R_n>0$ such that $v^{(n)}(R_n\cdot)\in\cM$. We want to prove that $R_n\to1$. If
		$$\frac{N}{2}\int_{\R^N}H(v^{(n)})\,dx<\int_{\R^N}|\nabla v^{(n)}|^2\,dx$$
		holds for a.e. $n$,
		then from \eqref{eq:Rn} and the fact that $\nu>0$ we get $R_n\to1$. If, passing to a subsequence,
		$$\int_{\R^N} |\nabla v^{(n)}|^{2} \, dx\leq \frac{N}{2}\int_{\mathbb{R}^N}H(v^{(n)})\, dx$$
		holds, then we infer $R_n\ge1$. Note that $\lim_n | u^{(n)}|_2^2-| v^{(n)}|_2^2=| \tilde{u}^{(i)}|_2^2>0$, hence $v^{(n)}\in\cD$ and $v^{(n)}(R_n\cdot)\in\cM\cap\cD$ for a.e. $n$. Hence the Brezis--Lieb Lemma yields
		\begin{equation}\label{eq:TWO}\begin{split}
			c&\le J\bigl(v^{(n)}(R_n\cdot)\bigr)=J\bigl(v^{(n)}(R_n\cdot)\bigr)-\frac12M\bigl(v^{(n)}(R_n\cdot)\bigr)\,dx=\frac{1}{R^N_n}\int_{\rn}\frac{N}{4}H(v^{(n)})-G(v^{(n)})\,dx\\
			&\leq \int_{\rn}\frac{N}{4}H(v^{(n)})-G(v^{(n)})\,dx\leq \int_{\rn}\frac{N}{4}H(u^{(n)})-G(u^{(n)})\,dx+o(1)\\
			&=J(u^{(n)})-\frac12M(u^{(n)})+o(1)
			=J(u^{(n)})+o(1)=c+o(1),
		\end{split}\end{equation}
		which implies that $R_n\to 1$ as claimed. Therefore we have that
		\begin{equation}\label{eq:contradict2}
		\int_{\R^N} |\nabla v^{(n)}|^{2} \, dx=o(1)+\frac{N}{2}\int_{\mathbb{R}^N}H(v^{(n)})\, dx=o(1)+\sum_{j=1}^K\theta_j\int_{\R^N} |v_j^{(n)}|^{2^*} \, dx	
		\end{equation}
		and as in {\em Claim 1} we get $\nu^{2/(N-2)}\ge \bar{S}^{N/(N-2)}$. Since $J(u^{(n)})-J(v^{(n)})=J(\tilde{u}^{(i)})+o(1)$ and $J(\tilde{u}^{(i)})\ge\int_{\rn}\frac{N}4H(\tilde{u}^{(i)})-G(\tilde{u}^{(i)})\,dx\ge0$, we have
		\begin{equation}\label{eq:THREE}
		c=\lim_nJ(\tilde{u}^{(i)})+J(v^{(n)})=J(\tilde{u}^{(i)})+\frac12\nu-\frac1{2^*}\lim_n\sum_{j=1}^K\theta_j\int_{\R^N}|v_j^{(n)}|^{2^*}\,dx\geq \frac1N\nu\ge\frac1N\bar{S}^{N/2},
		\end{equation}
		a contradiction.\\
		\indent {\em Conclusion.}	Let $i\in I$ and, for simplicity, let us denote $\tilde{u}^{(i)}=:\tilde{u}$. If $\int_{\R^N} |\nabla \tilde{u}|^2 \, dx<\frac{N}{2}\int_{\mathbb{R}^N}H(\tilde{u})\, dx$, then there 
		exists $R>1$ such that $\tilde u(R\cdot)\in\cM$,  whence $\tilde{u}(R\cdot)\in\cD$.  Hence Fatou's Lemma yields
		\begin{equation}\label{eq:FOUR}\begin{split}
			c&\le J\bigl(\tilde u(R\cdot)\bigr)=J\bigl(\tilde u(R\cdot)\bigr)-\frac12M\bigl(\tilde u(R\cdot)\bigr)\,dx=\frac{1}{R^N}\int_{\rn}\frac{N}{4}H(\tilde u)-G(\tilde{u})\,dx\\
			&<\liminf_n\int_{\rn}\frac{N}{4}H(u^{(n)})-G(u^{(n)})\,dx=\liminf_nJ(u^{(n)})-\frac12M(u^{(n)})=\liminf_nJ(u^{(n)})=c,
		\end{split}\end{equation}
		which is a contradiction. Therefore $u^{(n)}(\cdot+y^{(i,n)})\to \tilde{u}$ in $\cD^{1,2}(\R^N)^K$ (which, together with \eqref{eq:Me2}, implies that $I$ is a singleton) and, consequently, in $L^{2^*}(\rn)^K$. Moreover, in virtue of the Brezis--Lieb lemma, $\int_{\rn}H(u^{(n)})\,dx\to\int_{\rn}H(\tilde{u})\,dx$ because, from the interpolation inequality,
		\[\begin{split}
		\int_{\rn}H(u^{(n)}-\tilde{u})\,dx & \le C(|u^{(n)}-\tilde{u}|_{2_N}^{2_N}+|u^{(n)}-\tilde{u}|_{2^*}^{2^*})\\
		& \le C(|u^{(n)}-\tilde{u}|_2^{2t}|u^{(n)}-\tilde{u}|_{2^*}^{2^*(1-t)}+|u^{(n)}-\tilde{u}|_{2^*}^{2^*})\to0
		\end{split}\]
		for some $C>0$ and $t=\frac{2^*-2_N}{2^*-2}$. Hence $\tilde{u}\in\cM\cap\cD$, and, arguing as before but with $R=1$, $J(\tilde{u})=c$.
		
		Now we consider the case $\theta=0$ and in a similar way we prove {\em Claim 1} and {\em Claim 2} by getting a contradiction in \eqref{eq:contradict1} and \eqref{eq:contradict2}. Finally note that arguments of  {\em Conclusion} apply in the case $\theta=0$ as well.
	\end{proof}

	For $f\colon\rn\to\R$ measurable we denote by $f^*$ the Schwarz rearrangement of $|f|$. Likewise, if $A\subset\rn$ is measurable, we denote by $A^*$ the Schwarz rearrangement of $A$ \cite{BerLions1,LiebLoss}.

	\begin{Lem}\label{lem:radmin}
		Assume that (A1)--(A5) and \eqref{eq:eta2} are verified, $G$ is of the form \eqref{eq:Gsp}, and either $\theta=0$ or $\theta\in(0,\infty)^K$ and \eqref{eq:groundlevel} holds.
		Then $c$ is attained by a $K$-tuple of radial, nonnegative and radially nonincreasing functions.
	\end{Lem}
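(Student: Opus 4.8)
The plan is to start from a minimizer produced by Lemma \ref{lem:infismin} and to show that its Schwarz rearrangement, after a suitable scaling, is again a minimizer. Concretely, let $u\in\cM\cap\cD$ be such that $J(u)=c$; since $c>0$ by Lemma \ref{lem:bdaw2}, we have $u\ne0$. Set $v:=u^*=(u_1^*,\dots,u_K^*)$. Rearrangement preserves every $L^p$-norm, so $|v_i|_2=|u_i|_2\le\rho_i$ and hence $v\in\cD\setminus\{0\}$; moreover the P\'olya--Szeg\H{o} inequality gives $|\nabla v|_2\le|\nabla u|_2$.

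First I would record the one genuinely geometric fact, namely that rearrangement does not decrease the potential energy along the whole scaling curve: for every $s>0$,
\[
\int_{\rn}G(s^{N/2}v)\,dx\ge\int_{\rn}G(s^{N/2}u)\,dx.
\]
Writing $G$ in the form \eqref{eq:Gsp}, the uncoupled terms contribute equally, because each $G_i$ is even and $s^{N/2}u_i^*$ is equimeasurable with $s^{N/2}|u_i|$, so $\int_{\rn}G_i(s^{N/2}u_i^*)\,dx=\int_{\rn}G_i(s^{N/2}u_i)\,dx$; the coupling terms only increase, since $(s^{N/2}u_i)^*=s^{N/2}u_i^*$ and the multiple Hardy--Littlewood rearrangement inequality \cite{LiebLoss} yields $\int_{\rn}\prod_i(s^{N/2}u_i^*)^{r_{i,j}}\,dx\ge\int_{\rn}\prod_i|s^{N/2}u_i|^{r_{i,j}}\,dx$, with $\beta_j\ge0$.

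Combined with $|\nabla v|_2\le|\nabla u|_2$ and the identity $J(s\star w)=\frac{s^2}2|\nabla w|_2^2-s^{-N}\int_{\rn}G(s^{N/2}w)\,dx$, the displayed inequality gives $J(s\star v)\le J(s\star u)$ for every $s>0$. Since $u\in\cM$, Lemma \ref{lem:phi} (applicable because $u\in\cD$ guarantees \eqref{eq:eta}) says that $s=1$ is a global maximizer of $s\mapsto J(s\star u)$, whence $\sup_{s>0}J(s\star v)\le\sup_{s>0}J(s\star u)=J(u)=c$. Applying Lemma \ref{lem:phi} to $v$ — again $v\in\cD\setminus\{0\}$ ensures \eqref{eq:eta} — produces $s_v>0$ with $s_v\star v\in\cM$; as $\star$ preserves the $L^2$-norms, $s_v\star v\in\cM\cap\cD$, so that $c\le J(s_v\star v)\le\sup_{s>0}J(s\star v)\le c$ and thus $J(s_v\star v)=c$. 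Finally, $v$ is a $K$-tuple of radial, nonnegative, radially nonincreasing functions, and these properties are preserved under $w\mapsto s_v\star w$, so $s_v\star v$ is the desired minimizer.

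The only real content is the scaling-curve comparison $J(s\star v)\le J(s\star u)$: the point is \emph{not} to compare $J(v)$ with $J(u)$ directly (which is problematic, since the radius $R_u$ realizing membership in $\cM$ may change under rearrangement), but to compare the two curves $s\mapsto J(s\star u)$ and $s\mapsto J(s\star v)$ pointwise and then exploit that the mass-preserving scaling $\star$ keeps everything inside $\cD$. The rearrangement estimate for the coupling terms, i.e.\ the multiple-function Hardy--Littlewood inequality, is the technical heart of the argument.
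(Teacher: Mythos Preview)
Your proof is correct and in fact cleaner than the paper's. Both start from a minimizer and pass to its Schwarz symmetrization, and both rely on the same three rearrangement facts: P\'olya--Szeg\H{o} for the gradient, equimeasurability for the even uncoupled parts $G_i$, and the multiple Hardy--Littlewood inequality for the coupling products $\prod_i|u_i|^{r_{i,j}}$ with $\beta_j\ge0$. The difference lies in the assembly. The paper first uses these facts on $H$ to get $M(u^*)\le M(\tilde u)=0$, hence $a:=a(u^*)\le1$, and then compares the specially weighted combination $J-\tfrac1d M$ at the single scale $a$, where $d=\tfrac{N}{2}\max_j\bigl(\sum_ir_{i,j}-2\bigr)\ge2$ is chosen so that each coupling term enters this combination with a nonpositive coefficient; this together with $M(a\star\tilde u)\ge0$ yields the chain $c\le J(a\star u^*)=J(a\star u^*)-\tfrac1d M(a\star u^*)\le J(a\star\tilde u)-\tfrac1d M(a\star\tilde u)\le J(a\star\tilde u)\le J(\tilde u)=c$. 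You instead apply the rearrangement facts directly to $G$, obtaining the pointwise curve comparison $J(s\star u^*)\le J(s\star u)$ for every $s>0$, and then sandwich via Lemma~\ref{lem:phi}. Your route is shorter: it avoids the auxiliary parameter $d$, the intermediate constraint $M$, and the separate verification that $a\le1$. The paper's route has no real extra generality here, since the rearrangement inequality for $H$ used to control $M$ already follows from (and is essentially equivalent to) the one for $G$ that you use.
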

	\begin{proof}
		Let $\tilde{u}\in\cM\cap\cD$ such that $J(\tilde{u})=c$ be given by Lemma \ref{lem:infismin}.
		For every $j\in\{1,\dots,K\}$ let $u_j$ be the Schwarz rearrangement of $|\tilde{u}_j|$ and denote $u:=(u_1,\dots,u_K)$. 
		Let $a=a(u)$ be determined by Lemma \ref{lem:phi}. In view of the properties of the Schwarz rearrangement  \cite{BerLions1,LiebLoss}, we obtain
		\[
		M(1\star u)=M(u)\le M(\tilde{u})=0,
		\]
		therefore in view of Lemma \ref{lem:phi} we have that $a\le1$ and, consequently, $M(a\star\tilde{u})\ge0$. Let $$d:=\frac{N}{2}\max_{j=1,\dots, L}\Big(\sum_{i=1}^Kr_{i,j}-2\Big)\ge2.$$ Then
		\[\begin{split}
			c\le \, & J(a\star u)=J(a\star u)-\frac{1}{d}M(a\star u)\\
			= \, &\irn\sum_{i=1}^Ka^2\biggl(\frac12-\frac{1}{d}\biggr)|\nabla u_i|^2+\frac{1}{a^N}\biggl(\frac{N}{2d}H_i(a^{N/2}u_i)-G_i(a^{N/2}u_i)\biggr)\,dx\\
			&-\frac{1}{a^N}
			\sum_{j=1}^L\beta_j\bigg(1-\frac1d\Big(\sum_{i=1}^Kr_{i,j}-2\Big)\bigg)\prod_{i=1}^K|a^{N/2}u_i|^{r_{i,j}}\\
			\le \, &\irn\sum_{i=1}^Ka^2\biggl(\frac12-\frac{1}{d}\biggr)|\nabla\tilde{u}_i|^2+\frac{1}{a^N}\biggl(\frac{N}{2d}H_i(a^{N/2}|\tilde{u}_i|)-G_i(a^{N/2}|\tilde{u}_i|)\biggr)\,dx\\
			&-\frac{1}{a^N}
			\sum_{j=1}^L\beta_j\bigg(1-\frac1d\Big(\sum_{i=1}^Kr_{i,j}-2\Big)\bigg)\prod_{i=1}^K|a^{N/2}\tilde{u}_i|^{r_{i,j}}\\
			= \, &J(a\star\tilde{u})-\frac{1}{d}M(a\star\tilde{u})\le J(a\star\tilde{u})\le J(\tilde{u})=c,
		\end{split}\]
		i.e., $J(a\star u)=c$.
	\end{proof}

	\begin{Lem}\label{lem:minons}
		(a) Assume that (A1)--(A3), (A4,$\preceq$), (A5), and \eqref{eq:eta2} hold and let $u\in\cM\cap\cD$ such that $J(u)=c$ and $u_i$ is radial for every $i\in\{1,\dots,K\}$. Then  $u$ is of class $\cC^2$.\\
		(b) If, in addition, $N\in\{3,4\}$, $G$ is of the form \eqref{eq:Gsp}, and $u_i$ is nonnegative for every $i\in\{1,\dots,K\}$, then $u\in\partial\cD$. Moreover, for every $i\in\{1,\dots,K\}$, either $|u_i|_2=\rho_i$ or $u_i=0$.
	\end{Lem}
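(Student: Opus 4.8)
The plan is to prove the two parts in turn: in (a) I would derive the Euler--Lagrange system satisfied by $u$ and then bootstrap regularity, while in (b) I would feed that system into a Liouville-type argument.

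For part (a), since $u$ minimizes $J$ on $\cM\cap\cD$, with $\cM=\{M=0\}$ a $\cC^1$-manifold and $\cD$ cut out by the inequalities $|u_i|_2^2\le\rho_i^2$, I would apply the abstract multiplier rule of Proposition \ref{Prop:Lag} (which rests on \cite{Clarke}). This produces a multiplier $\sigma\in\R$ associated with $M$ and multipliers $\la_i\ge0$ associated with the inequality constraints, satisfying the complementary slackness conditions $\la_i(|u_i|_2^2-\rho_i^2)=0$, such that
\begin{equation*}
J'(u)[v]+\sigma M'(u)[v]+\sum_{i=1}^K\la_i\int_{\rn}u_iv_i\,dx=0\quad\text{for all }v\in H^1(\rn)^K.\tag{$\ast$}
\end{equation*}
The heart of part (a) is to show that $\sigma=0$, and the idea is to test $(\ast)$ against the generator $w:=\frac{d}{ds}(s\star u)\big|_{s=1}$ of the scaling $s\star u$. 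Because this scaling preserves each $L^2$-norm, $|s\star u_i|_2=|u_i|_2$, one gets $\int_{\rn}u_iw_i\,dx=\tfrac12\frac{d}{ds}|s\star u_i|_2^2\big|_{s=1}=0$; because $u\in\cM$, $J'(u)[w]=\vp'(1)=M(u)=0$; and, using the identity $M(s\star u)=s\vp'(s)$ from Lemma \ref{lem:phi}, $M'(u)[w]=\frac{d}{ds}\bigl(s\vp'(s)\bigr)\big|_{s=1}=\vp''(1)$. Hence $(\ast)$ with $v=w$ collapses to $\sigma\vp''(1)=0$.

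The decisive and, to my mind, hardest point is the nondegeneracy $\vp''(1)\ne0$, which is exactly where (A4,$\preceq$) is needed. A direct computation using $\vp'(1)=0$ gives
\begin{equation*}
\vp''(1)=\frac{N^2}{4}\int_{\rn}\bigl(2_NH(u)-\langle h(u),u\rangle\bigr)\,dx,
\end{equation*}
and since (A4,$\preceq$) yields $2_NH\preceq\langle h,\cdot\rangle$, Lemma \ref{Lem:ineq} makes this integral strictly negative for $u\ne0$; thus $\vp''(1)<0$ and $\sigma=0$. With $\sigma=0$, $(\ast)$ becomes the weak form of $-\Delta u_i+\la_iu_i=\partial_iG(u)$, so $u$ solves \eqref{eq:} with $\la\in[0,\infty)^K$; by (A0) and (A5) the right-hand side obeys $|g(u)|\le C(|u|+|u|^{2^*-1})$, and a standard elliptic bootstrap as in \cite{BrezisLiebCMP84,BerLions1} upgrades $u$ to $W^{2,q}_{\mathrm{loc}}(\rn)^K$ for all $q<\infty$ and then to $\cC^2$. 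One technical caveat: substituting $w$ in $(\ast)$ requires $w\in H^1(\rn)^K$, so I would first carry out the regularity argument on the full multiplier equation (uniformly elliptic as long as $1+2\sigma\ne0$, a case one rules out directly) and only then run the scaling test; equivalently, one may differentiate the identity $L(s\star u)=\vp(s)+\sigma s\vp'(s)+\text{const}$ for the Lagrangian $L$.

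For part (b), assume $N\in\{3,4\}$, that $G$ has the form \eqref{eq:Gsp}, and that each $u_i\ge0$. Every summand of \eqref{eq:Gsp} has a nonnegative partial derivative on the nonnegative orthant, so $\partial_iG(u)\ge0$ and each $u_i\ge0$ satisfies $-\Delta u_i+\la_iu_i\ge0$; by the strong maximum principle each $u_i$ is either $\equiv0$ or strictly positive. Suppose, for contradiction, that $0<|u_i|_2<\rho_i$ for some $i$. Then complementary slackness forces $\la_i=0$, so $u_i>0$ solves $-\Delta u_i=\partial_iG(u)\ge0$ in $\rn$ with $u_i\in H^1(\rn)\subset L^2(\rn)$; moreover $\partial_iG(u)\not\equiv0$, so $u_i$ is a nontrivial nonnegative superharmonic function. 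Here I would invoke the Liouville-type result of \cite{Ikoma}: via its Riesz-potential representation such a $u_i$ satisfies $u_i(x)\ge c|x|^{2-N}$ for large $|x|$, and $\int_{\{|x|>1\}}|x|^{2(2-N)}\,dx=\infty$ precisely when $N\le4$, contradicting $u_i\in L^2(\rn)$. This $L^2$-obstruction, which is the main obstacle of part (b) and the source of the restriction $N\in\{3,4\}$, forces $\la_i>0$ and hence $|u_i|_2=\rho_i$. Therefore for every $i$ either $u_i=0$ or $|u_i|_2=\rho_i$; and since $u\in\cM$ is nontrivial, some component is nonzero with $|u_i|_2=\rho_i$, whence $u\in\partial\cD$.
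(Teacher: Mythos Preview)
Your proof is correct, and it is worth recording how it compares with the paper's argument.

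For part (a), both proofs apply Proposition~\ref{Prop:Lag} and then eliminate $\sigma$ via the identity
\[
\sigma\int_{\rn}\bigl(\langle h(u),u\rangle-2_NH(u)\bigr)\,dx=0,
\]
which is strictly positive under (A4,$\preceq$) by Lemma~\ref{Lem:ineq}; your $\vp''(1)$ is exactly $-\tfrac{N^2}{4}$ times this integral, so the two routes coincide at the decisive step. The paper reaches this identity by combining the Nehari relation (test with $v=u$) with the Poho\v{z}aev identity, having first ruled out the degenerate coefficient $1-2\sigma=0$ from Nehari alone. This bypasses the regularity issue you flag: $w=\tfrac{N}{2}u+x\cdot\nabla u$ is not a priori in $H^1(\rn)^K$, whereas the Poho\v{z}aev identity comes for free from the $W^{2,q}_{\mathrm{loc}}$ regularity of \cite{BrezisLiebCMP84} once the equation is uniformly elliptic. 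Your workaround (establish regularity for the full multiplier equation first, then test with $w$) is valid but needs the same preliminary exclusion of the degenerate case, which is done exactly by the Nehari computation; so in the end the two arguments are the same computation in different clothing.

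For part (b) your route is genuinely shorter. The paper first proves $u\in\partial\cD$ by a separate pointwise argument: assuming all $\la_i=0$, Nehari plus Poho\v{z}aev force $\langle g(u),u\rangle=2^*G(u)$ pointwise, which (exploiting the structure \eqref{eq:Gsp}) pins each $G_i$ down to $\theta_i|s|^{2^*}/2^*$ on the range of $u_i$ and identifies $u_i$ with an Aubin--Talenti instanton, not in $L^2$ when $N\in\{3,4\}$. Only afterwards does the paper run the Liouville step from \cite{Ikoma} on the components with $\la_i=0$. You observe that this second step already does all the work: whenever $0<|u_i|_2<\rho_i$ one has $\la_i=0$, hence $-\Delta u_i=\partial_iG(u)\ge0$ with $u_i>0$ and $u_i\in L^{N/(N-2)}(\rn)$ (this is where $N\in\{3,4\}$ enters), and the Liouville lemma forces $u_i=0$, a contradiction. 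This handles the dichotomy for every $i$ simultaneously and makes the Aubin--Talenti detour unnecessary. The paper's extra step does, however, yield more information in the case $H\preceq(2^*-2)G$ without the structural assumption \eqref{eq:Gsp} or the restriction $N\le4$; see Remark~\ref{rem}.
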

	\begin{proof}
		\textit{(a)} 
		In Proposition \ref{Prop:Lag} we set $f=J$,  $\phi_i(v)=|v_i|_2^2-\rho_i^2$, $1\leq i\leq m=K$, $\psi_1(v)=M(v)$, $n=1$, $v\in \cH=H^1(\R^N)^K$. Then there exist $(\lambda_1,\dots,\lambda_K)\in[0,\infty)^K$ and $\sigma\in\R$ such that
		\begin{equation}\label{eq:solution}
			-(1-2\sigma)\De u_i+\lambda_iu_i=\partial_iG(u)-\sigma\frac{N}{2}\partial_iH(u)
		\end{equation}
		for every $i\in\{1,\dots,K\}$ and $u$ satisfies the Nehari identity
		\begin{equation}\label{eq:sigN}
			(1-2\sigma)\irn|\nabla u|^2\,dx+\sum_{i=1}^K\irn\la_i|u_i|^2\,dx+\irn\sigma\frac{N}{2}\langle h(u),u\rangle-\langle g(u),u\rangle\,dx=0.
		\end{equation}
		If $\sigma=\frac12$, then (A4,$\preceq$), (A5), and \eqref{eq:sigN} yield
		\[\begin{split}
			0&\ge\irn\frac{N}{4}\langle h(u),u\rangle-\langle g(u),u\rangle\,dx=\irn\frac{N}{4}\langle h(u),u\rangle-H(u)-2G(u)\,dx\\
			&>\irn\frac{N}{2}H(u)-2G(u)\,dx\ge0,
		\end{split}\]
		a contradiction. Hence $\sigma\ne\frac12$ and $u$ satisfies also the Poho\v{z}aev identity
		\begin{equation}\label{eq:sigP}
			(1-2\sigma)\irn|\nabla u|^2\,dx+\frac{2^*}{2}\sum_{i=1}^K\irn\la_i|u_i|^2\,dx+2^*\irn\sigma\frac{N}{2}H(u)-G(u)\,dx=0.
		\end{equation}
		Combining \eqref{eq:sigN} and  \eqref{eq:sigP} we obtain
		\[
		(1-2\sigma)\irn|\nabla u|^2\,dx+\frac{N}{2}\irn\sigma N\Bigl(\frac12\langle h(u),u\rangle-H(u)\Bigr)-H(u)\,dx=0
		\]
		and, using the fact that $u\in\cM$,
		\[
		(1-2\sigma)\irn H(u)\,dx+\irn\sigma N\Bigl(\frac12\langle h(u),u\rangle-H(u)\Bigr)-H(u)\,dx=0,
		\]
		that is
		\[
		\sigma\irn\langle h(u),u\rangle-2_NH(u)\,dx=0,
		\]
		which together with (A4,$\preceq$) yields $\sigma=0$. In view of \cite[Theorem 2.3]{BrezisLiebCMP84}, $u\in W^{2,q}_\textup{loc}(\R^N)^K$ for all $q < \infty$, hence $u\in \cC^{1,\alpha}_\textup{loc}(\rn)^K$ for all $\alpha<1$. Then, arguing as in the proof of \cite[Lemma 1]{BerLions1}, we have that $u$ is of class $\cC^2$.
		
		\textit{(b)} First we show that  $u\in\partial\cD$. Suppose by contradiction that $|u_i|_2<\rho_i$ for every $i$. Then
		 $\la_1=\dots=\la_K=0$ and from \eqref{eq:sigN} and \eqref{eq:sigP} (with $\sigma=0$ as in proof of (a)) there follows
		\begin{equation}\label{eq:Gg2^*}
			\irn\langle g(u),u\rangle-2^*G(u)\,dx=0.
		\end{equation}
		In view of (A5) 
		\begin{equation}\label{eq:pointSob}
			2^* G\bigl(u(x)\bigr)=\langle g\bigl(u(x)\bigr),u(x)\rangle
		\end{equation}
		for every $x\in\R^N$. Since $G_i$ satisfies (A5), we get $2^* G_i(u_i(x))\geq g_i(u_i(x))u_i(x)$ for all $i\in\{1,\dots,K\}$ and note that
		$$2^*\sum_{j=1}^L\beta_j\prod_{i=1}^K|u_i(x)|^{r_{i,j}}\geq \sum_{j=1}^L\beta_j\sum_{k=1}^K r_{k,j}\prod_{i=1}^K|u_i(x)|^{r_{i,j}},$$
		since $\sum_{k=1}^K r_{k,j}<2^*$.
		Hence, from \eqref{eq:pointSob}, the inequalities above are actually equalities. On the other hand, for every $j\in\{1,\dots,L\}$, $\sum_{i=1}^Kr_{i,j}<2^*$, which yields $\beta_j=0$ or $\prod_{i=1}^K|u_i(x)|^{r_{i,j}}=0$ for every $x\in\rn$, so that the coupling term is zero and thus $$2^* G_i(u_i(x))= g_i(u_i(x))u_i(x)$$ for every $i\in\{1,\dots,K\}$ and every $x\in\rn$.
		
		Now fix $i\in\{1,\dots,K\}$ such that $u_i\neq 0$. Since $u_i\in H^1(\R^N)\cap \cC^2$, there exists an open interval $I \subset \R$ such that $0 \in \overline{I}$ and $2^* G_i(s)=g_i(s)s$ for $s\in \overline{I}$.
		Then $G_i(s)=\theta_i|s|^{2^*}/2^*$ for $s\in \overline{I}$ and $u_i$ solves
		$-\Delta u_i = \theta_i |u_i|^{2^* - 2 } u_i$. Hence, since $u_i\ge0$, $u_i$ is an Aubin--Talenti instanton, up to scaling and translations, which is not $L^2$-integrable because $N \in \{3,4\}$. 
		Therefore  $u\in\partial\cD$.
		
		Now we prove the second part and suppose that there exists $\nu\in\{1,\dots,K-1\}$ such that, up to changing the order, $|u_i|_2<\rho_i$ for every $i\in\{1,\dots,\nu\}$ and $|u_i|_2=\rho_i$ for every $i\in\{\nu+1,\dots,K\}$.
		From Proposition \ref{Prop:Lag} there exist $0=\lambda_1=\dots=\lambda_\nu\leq \lambda_{\nu+1},\dots,\lambda_K$ and $\sigma\in\R$ such that
		\begin{equation}\label{eq:lagSM1}
			\begin{cases}
				-(1-2\sigma)\De u_i=\partial_iG(u)-\sigma\frac{N}{2}\partial_iH(u) \quad \text{for every }i\in\{1,\dots,\nu\}\\
				-(1-2\sigma)\De u_i+\lambda_iu_i=\partial_iG(u)-\sigma\frac{N}{2}\partial_iH(u) \quad \text{for every }i\in\{\nu+1,\dots,K\}
			\end{cases}
		\end{equation}
		and as before we obtain	
		$\sigma=0$. Since $G_i$ satisfies the scalar variant of (A5), $(0,\infty)\ni s\mapsto G_i(s)/s^{2_N}\in\R$ is nondecreasing, hence
		$G_i$ is nondecreasing as well for all $i$. Then, the first $\nu$ equations in \eqref{eq:lagSM1} with $\sigma=0$ yield $-\De u_i\ge0$ for $i\in\{1,\dots,\nu\}$. Since  $u\in L^{\frac{N}{N-2}}(\rn)^K$ as $N\in\{3,4\}$, $u$ is of class $\cC^2$, and $u_i\geq 0$,  \cite[Lemma A.2]{Ikoma} implies $u_i=0$ for every $i\in\{1,\dots,\nu\}$. Notice that we have proved that $\la_i=0$ implies that $u_i=0$.
	\end{proof}
	
	\begin{Rem}\label{rem} We point out that in addition to the assumptions of Lemma \ref{lem:minons}, i.e., (A1)--(A3), (A4,$\preceq$), (A5), and \eqref{eq:eta2} hold, $u\in\cM\cap\cD$, and $J(u)=c$, we can show that  $u\in\partial \cD$ for any dimension $N\geq 3$ and without the assumption that $G$ is of the form \eqref{eq:Gsp} provided that  $H\preceq(2^*-2)G$ holds. Indeed, observe that \eqref{eq:Gg2^*} contradicts $H\preceq(2^*-2)G$ and Lemma \ref{Lem:ineq}.
	\end{Rem}

	\begin{proof}[Proof of Theorem \ref{th:main1}]
		Statement (a) follows from Lemmas \ref{lem:infismin} and \ref{lem:radmin}. Now we prove statement (b). From Lemma \ref{lem:minons} (a), $u$ is of class $\cC^2$, while from Proposition \ref{Prop:Lag} there exist $(\la_1,\dots,\la_K)\in[0,\infty)^K$ and $\sigma\in\R$ such that \eqref{eq:solution} holds and $\sigma=0$ as in the proof of Lemma \ref{lem:minons} (a).
	\end{proof}

	\begin{proof}[Proof of Theorem \ref{th:main2}]
		It follows from Lemma \ref{lem:minons} (b), Theorem \ref{th:main1} (b), and the maximum principle \cite[Lemma IX.V.1]{Evans} (the implication $u_i\ne0\Rightarrow\la_i>0$ is proved as in the proof of Lemma \ref{lem:minons} (b)).
	\end{proof}

\begin{proof}[Proof of Corollary \ref{Cor}]
From Theorem \ref{th:main1} \textit{(b)}, there exists $u\in\cM\cap\cD\cap\cC^2(\rn)$ and $\lambda\ge0$ such that $J(u)=c$ and $(\lambda,u)$ is a solution to \eqref{eq:}. Observe that, from Lemma \ref{lem:radmin}, we can assume that $u$ is radial, nonnegative (in fact, positive owing to the maximum principle and because $G$ is nondecreasing on $(0,\infty)$), and radially nonincreasing provided that $G$ is even. Next, since $N\in\{3,4\}$ and $G$ is even or $H\preceq (2^*-2)G$, arguing as in the proof of Lemma \ref{lem:minons} (b) -- see also Remark \ref{rem} -- we obtain that $u\in\partial\cD=\cS$ and $(\la,u)$ is a solution to \eqref{eq:rho}. Since $u$ satisfies the Nehari and the Poho\v{z}aev inequalities, we get
\[
\lambda\frac{2}{N-2}\int_{\rn}|u|^2\,dx=\int_{\rn}2^*G(u)-g(u)u\,dx
\]
and, again arguing as in the proof of Lemma \ref{lem:minons} (b) or Remark \ref{rem}, we obtain $\irn2^*G(u)-g(u)u\,dx>0$, whence $\la>0$. Finally, suppose that $G$ is even, so $u$ is (in particular) positive and radially nonincreasing. Note that $u(x)\to0$ as $|x|\to\infty$ and that there exists $t_0>0$ such that $g(t)\le\lambda t$ for every $t\in[0,t_0]$ and $g(t)>\lambda t$ for every $t>t_0$. If $u$ is constant in the annulus $A:=\{\tau_1<|x|<\tau_2\}$ for some $\tau_2>\tau_1>0$, then $0=-\Delta u=g(u)-\lambda u$ in $A$, thus $-\Delta u\le0$ in $\Omega:=\{|x|>\tau_1\}$ because $u$ is radially nonincreasing and $u(x)\leq t_0$ if $x\in\Om$. At the same time, $u$ attains the maximum over $\overline{\Omega}$ at every point of $A$, which is impossible because $u|_\Omega$ is not constant. This proves that $u$ is radially decreasing.
\end{proof}
	
	\begin{Lem}\label{lem:minons2}
		Suppose that $K=2$, $L=1$, and the assumptions in Lemma \ref{lem:minons} (b) hold. If $r_{1,1}+r_{2,1}>2_N$ and $\beta_1$ is sufficiently large, then $u\in\cS$.
	\end{Lem}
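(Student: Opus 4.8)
The plan is to leverage Lemma \ref{lem:minons} (b), which already guarantees that $u\in\partial\cD$ and that each component $u_i$ is either trivial or satisfies $|u_i|_2=\rho_i$. Hence, to conclude $u\in\cS$ it is enough to rule out the two semitrivial configurations $(u_1,0)$ and $(0,u_2)$. The key observation is that the energy of a semitrivial competitor does not see the coupling term: if $u_2=0$ then $\beta_1|u_1|^{r_{1,1}}|u_2|^{r_{2,1}}\equiv0$, so $J(u_1,0)=J_1(u_1)$ and $(u_1,0)\in\cM\cap\cD$ if and only if $u_1$ lies in the scalar constraint set $\cM_1\cap\cD_1$ associated with $G_1$. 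I would therefore set
\[
c_1:=\inf_{\cM_1\cap\cD_1}J_1,\qquad c_2:=\inf_{\cM_2\cap\cD_2}J_2,
\]
and note that, by the scalar version of Lemma \ref{lem:bdaw2}, both are strictly positive and, crucially, both are independent of $\beta_1$.

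Next I would show that the ground state level $c=\inf_{\cM\cap\cD}J$ can be made arbitrarily small by taking $\beta_1$ large. To this end I fix once and for all a fully nontrivial, nonnegative, radial test pair $u^0=(u^0_1,u^0_2)\in\cD$ whose components overlap, so that $C_0:=\irn|u^0_1|^{r_{1,1}}|u^0_2|^{r_{2,1}}\,dx>0$. Since $u^0\in\cD$, the remark following Lemma \ref{lem:phi} yields \eqref{eq:eta}, so Lemma \ref{lem:phi} provides $s_0>0$ with $s_0\star u^0\in\cM\cap\cD$ and $J(s_0\star u^0)=\max_{s>0}\vp(s)$, where $\vp(s)=J(s\star u^0)$. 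Using $G_1,G_2\ge0$ (from (A5)) to discard the uncoupled part, I would bound
\[
\vp(s)\le\frac{s^2}{2}|\nabla u^0|_2^2-\beta_1C_0\,s^{\frac{N}{2}(r_{1,1}+r_{2,1})-N}.
\]
The whole point of the hypothesis $r_{1,1}+r_{2,1}>2_N$ is that the exponent $\frac{N}{2}(r_{1,1}+r_{2,1})-N$ is then strictly larger than $2$; an elementary maximization of the right-hand side over $s>0$ gives a bound of the form $C\beta_1^{-\kappa}$ with $\kappa>0$, whence $c\le\max_{s>0}\vp(s)\to0$ as $\beta_1\to\infty$.

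With these two facts in hand the conclusion is immediate: choosing $\beta_1$ so large that $c<\min\{c_1,c_2\}$, a semitrivial minimizer, say with $u_2=0$, would give $c=J(u)=J_1(u_1)\ge c_1>c$, a contradiction (and symmetrically if $u_1=0$). Thus both components of $u$ are nontrivial, and by Lemma \ref{lem:minons} (b) each carries full mass, i.e., $u\in\cS$. I expect the only delicate points to be bookkeeping rather than conceptual: one must check that raising $\beta_1$ keeps all the standing hypotheses in force --- in particular that, because $r_{1,1}+r_{2,1}>2_N$, the coupling contributes nothing to the constant $\eta$ in (A1), so that \eqref{eq:eta2} (and, via $c\to0$, also \eqref{eq:groundlevel}) continue to hold uniformly in $\beta_1$ --- and that $c_1,c_2$ genuinely do not depend on $\beta_1$, which is exactly what makes the energy comparison meaningful.
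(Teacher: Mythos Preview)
Your argument is correct and follows the same overall strategy as the paper's: identify the semitrivial energy levels as $\beta_1$-independent positive numbers, then drive $c$ below them by testing with a fixed fully nontrivial pair and exploiting $r_{1,1}+r_{2,1}>2_N$. The execution differs in two minor but genuine ways. First, the paper builds its test function $w=(\rho_1\bar v/\rho_2,\bar v)$ from the scalar ground state $\bar v$ obtained via Corollary~\ref{Cor}, whereas you take an arbitrary pair $u^0\in\cD$ with overlapping components; this spares you the appeal to Corollary~\ref{Cor} (whose only role in the paper's proof is to supply a fixed nonzero function). Second, the paper estimates $J(a_\beta\star w)$ on $\cM$ through $J=\tfrac{N}{4}H-G\le\tfrac{2}{N-2}G$ together with a separate analysis showing $a_\beta\to0$ and $\beta a_\beta^{N(r_1+r_2-2)/2-2}\le C$, while you simply bound $\varphi(s)\le\tfrac{s^2}{2}|\nabla u^0|_2^2-\beta_1C_0\,s^{N(r_{1,1}+r_{2,1})/2-N}$ and maximize explicitly, which also yields the decay rate $c\lesssim\beta_1^{-2/(N(r_{1,1}+r_{2,1})/2-N-2)}$. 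Your remark that the coupling does not contribute to $\eta$ (since $r_{1,1}+r_{2,1}>2_N$) is exactly what is needed to keep \eqref{eq:eta2} and the scalar hypotheses behind $c_1,c_2>0$ uniform in $\beta_1$.
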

	\begin{proof}
		Since $L=1$, we denote $\beta_1$, $r_{1,1}$, $r_{2,1}$ by  $\beta$, $r_{1}$, $r_{2}$ respectively.
		Suppose by contradiction that $u_1=0$ or $u_2=0$, say $u_1=0$, which implies that $|u_2|_2=\rho_2$. We want to find a suitable $w\in\cS$ such that
		\begin{equation}\label{eq:sup}
			J(a\star w)<c=J(0,u_2),
		\end{equation}
		where $a=a(w)$ is defined in Lemma \ref{lem:phi} (note that $a(w)=b(w)$ because (A4,$\preceq$) holds), which is impossible.
		First we show that $c$ does not depend on $\beta$. Consider the functional
		\[
		J_*\colon v\in H^1(\rn)\mapsto\int_{\rn}\frac12|\nabla v|^2-G_2(v)\,dx\in\R
		\]
		and the sets
		\[\begin{split}
			\cD_* & := \left\{ v \in H^1(\R^N) \ : \ \int_{\R^N} |v|^2 \, dx \leq \rho_2^2 \right\},\\
			\cM_* & := \left\{ v \in H^1(\R^N) \setminus \{0\} \ : \ \int_{\R^N} |v|^2 \, dx = \frac{N}2 \irn H_2(v)\,dx \right\}.
		\end{split}\]
		Observe that $J(0,v)=J_*(v)$ for $v\in H^1(\R^N)$.  Moreover $(0,v)\in\cD$ if and only if $v\in\cD_*$, and $(0,v)\in\cM$ if and only if $v\in\cM_*$. In particular,
		\[
		c=J(0,u_2)=J_*(u_2)\ge\inf_{\cM_*\cap\cD_*}J_*=\inf\{J(0,v):(0,v)\in\cM\cap\cD\}\ge c,
		\]
		i.e., $c=\inf_{\cM_*\cap\cD_*}J_*$, and the claim follows because $J_*$, $\cD_*$, and $\cM_*$ do not depend on $\beta$.
		
		In view of Corollary \ref{Cor}, there exists $\bar{v}\in\cM_*\cap\partial\cD_*$ such that $$J_*(\bar{v})=\inf_{\cM_*\cap\cD_*}J_*=c=\inf_{\cM_*\cap\partial\cD_*}J_*.$$ Note that $\bar{v}$ does not depend on $\beta$. Define $w=(w_1,w_2):=\bigl(\frac{\rho_1}{\rho_2}\bar{v},\bar{v}\bigr)$.
		From Lemma \ref{lem:phi}, $a=a_\beta$ is implicitly defined by
		\[\begin{split}
			\irn|\nabla w|^2\,dx=&\,\frac{N}{2}\irn\frac{G_1'(a_\beta^{N/2}w_1)a_\beta^{N/2}w_1-2G_1(a_\beta^{N/2}w_1)}{a_\beta^{N+2}}+\frac{G_2'(a_\beta^{N/2}w_2)a_\beta^{N/2}w_2-2G_2(a_\beta^{N/2}w_2)}{a_\beta^{N+2}}\\
			&+\beta(r_1+r_2-2)a_\beta^{N(r_1+r_2-2)/2-2}w_1^{r_1}w_2^{r_2}\,dx\\
			\ge&\,\beta(r_1+r_2-2)a_\beta^{N(r_1+r_2-2)/2-2}\frac{N}{2}\irn w_1^{r_1}w_2^{r_2}\,dx,
		\end{split}\]
		hence there exist $C>0$ not depending on $\beta$ such that
		\begin{equation}\label{eq:beta1}
			0<\beta a_\beta^{N(r_1+r_2-2)/2-2}\le C,
		\end{equation}
		whence
		\begin{equation}\label{eq:beta2}
			\lim_{\beta\to\infty}a_\beta=0.
		\end{equation}
		Since $a_\beta\star w\in\cM$, we have from (A5)
		\[\begin{split}
			J(a_\beta\star w)&=\irn\frac{N}{4}H(a_\beta\star w)-G(a_\beta\star w)\,dx\le\frac{2}{N-2}\irn G(a_\beta\star w)\,dx\\
			&=\frac{2}{N-2}\irn\frac{G_1(a_\beta^{N/2}w_1)+G_2(a_\beta^{N/2}w_2)}{a_\beta^N}\,dx+\frac{2\beta a_\beta^{N(r_1+r_2-2)/2}}{N-2}\irn w_1^{r_1}w_2^{r_2}\,dx,
		\end{split}\]
		therefore \eqref{eq:sup} holds true for sufficiently large $\beta$ owing to (A1), \eqref{eq:beta1}, and \eqref{eq:beta2}.
	\end{proof}
	
	
	\begin{proof}[Proof of Theorem \ref{th:main3}]
		It follows from Lemma \ref{lem:minons2} and Theorem \ref{th:main2}.
	\end{proof}
	
	Now we investigate the behaviour of the ground state energy with respect to $\rho$. For $\rho=(\rho_1,\dots,\rho_K)\in(0,\infty)^K$ we denote
	\begin{eqnarray*}
		\cD(\rho)&:=&\biggl\{u\in H^1(\rn)^K:\irn |u_i|^2\,dx\le\rho_i^2 \text{ for every } i\in\{1,\dots,K\}\biggr\}\\
		\cS(\rho)&:=&\biggl\{u\in H^1(\rn)^K:\irn |u_i|^2\,dx=\rho_i^2 \text{ for every } i\in\{1,\dots,K\}\biggr\}\\
		c(\rho)&:=&\inf\{J(u):u\in\cM\cap\cD(\rho)\}.
	\end{eqnarray*}

	\begin{Prop}\label{Prop:GSE}
	Assume that (A0)--(A5) and \eqref{eq:eta2} are satisfied.\\
	(i) If $\theta=0$, then $c$ is continuous and $\lim_{\rho\to0^+}c(\rho)=\infty$, where $\rho\to0^+$ means $\rho_i\to0^+$ for every $i\in\{1,\dots,K\}$.\\
	\noindent (ii) Let $\theta\in(0,\infty)^K$ and $\rho\in(0,\infty)^K$ . If \eqref{eq:groundlevel} holds for every $\rho'\in\prod_{j=1}^K(\rho_j-\eps,\rho_j)$ and some $\eps>0$, then $c$ is continuous at $\rho$. 
	If \eqref{eq:groundlevel} holds for every $\rho'\in(0,\eps)^K$ and some $\eps>0$, then $\displaystyle\lim_{\rho'\to0^+}c(\rho')=
	\frac{1}{N}S^{N/2}\sum_{i=1}^K\theta_i^{1-N/2}$\\
\noindent	
	(iii) If every ground state solution to \eqref{eq:} belongs to $\cS(\rho)$ (e.g. if the assumptions of Theorem \ref{th:main3} are satisfied), then $c$ is decreasing in the following sense: if $\rho,\rho'\in(0,\infty)^K$ are such that $\rho_i\ge\rho_i'$ for every $i\in\{1,\dots,K\}$ and $\rho_j>\rho_j'$ for some $j\in\{1,\dots,K\}$, then $c(\rho)<c(\rho')$.
	\end{Prop}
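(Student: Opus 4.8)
The plan is to lean on the monotonicity that is built into the definition of $c$, together with two mass-deformations. Since $\cD(\rho')\subseteq\cD(\rho)$ whenever $\rho_i'\le\rho_i$ for all $i$, the map $\rho\mapsto c(\rho)$ is nonincreasing in each variable, so $c(\rho)\le c(\rho')$ in that case; the three assertions are refinements of this. The deformations I would use are multiplication by a scalar $\tau$ close to $1$ (which rescales every $L^2$-norm by $\tau$ and moves the point by $o(1)$ in $H^1$) and the dilation $s\star u$, which preserves all $L^2$-norms and, by Lemma \ref{lem:phi}, satisfies $J(R\star u)=\max_{s>0}J(s\star u)$ for the $R=R_u$ with $R\star u\in\cM$. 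Composing the two lets me move any point of $\cM\cap\cD(\rho)$ into $\cM\cap\cD(\rho')$ with an $o(1)$ change of energy when $\rho'$ is close to $\rho$.

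\emph{Continuity (parts (i) and (ii)).} I would establish upper and lower semicontinuity at a fixed $\rho$. For upper semicontinuity, given $\delta>0$ pick $u\in\cM\cap\cD(\rho)$ with $J(u)<c(\rho)+\delta$; for $\rho'\to\rho$ put $\tau_{\rho'}:=\min_{i:\,u_i\ne0}\{1,\rho_i'/|u_i|_2\}\to1$, so $\tau_{\rho'}u\in\cD(\rho')$, and project by the mass-preserving dilation onto $\cM$. Since $\tau_{\rho'}u\to u$ in $H^1$ and $u\in\cM$, the projected energy tends to $J(u)$, whence $\limsup_{\rho'\to\rho}c(\rho')\le c(\rho)$. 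For lower semicontinuity I reverse the roles: along $\rho'^{(n)}\to\rho$ take near-minimizers $w^{(n)}\in\cM\cap\cD(\rho'^{(n)})$ with $J(w^{(n)})\le c(\rho'^{(n)})+1/n$ (when \eqref{eq:groundlevel} holds on a one-sided neighbourhood, as assumed in (ii), one may instead take genuine minimizers from Lemma \ref{lem:infismin}), shrink them by $\sigma_n:=\min_i\{1,\rho_i/|w_i^{(n)}|_2\}\to1$ into $\cD(\rho)$, and project back onto $\cM$. Because \eqref{eq:eta2} is an open condition it persists on a fixed slightly larger $\cD(\rho_0)\supseteq\cD(\rho'^{(n)})$, so Lemmas \ref{lem:bdaw1} and \ref{lem:coerc} give uniform bounds $0<\inf_n|\nabla w^{(n)}|_2\le\sup_n|\nabla w^{(n)}|_2<\infty$; consequently $\max_sJ(s\star(\sigma_nw^{(n)}))$ differs from $\max_sJ(s\star w^{(n)})=J(w^{(n)})$ only by $o(1)$, and $c(\rho)\le J(w^{(n)})+o(1)=c(\rho'^{(n)})+o(1)$ yields $c(\rho)\le\liminf_nc(\rho'^{(n)})$.

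\emph{Behaviour as $\rho\to0^+$.} For $\theta=0$ I would read $c(\rho)\to\infty$ off the proof of Lemma \ref{lem:bdaw2}, where $c(\rho)\ge\alpha^2/(2N)$ with $\alpha=(4Nc_\eps C_{N,2^*}^{2^*})^{-1/(2^*-2)}$ and $\eps=(4NC_{N,2_N}^{2_N}|\rho|^{4/N})^{-1}\to\infty$: using (A1) and (A3) one checks that for each $\delta>0$ the bound $\wt G(u)\le(\eta+\eps)|u|^{2_N}+\delta|u|^{2^*}$ holds once $\eps$ is large, i.e. $c_\eps\to0$, hence $\alpha\to\infty$. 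For $\theta\in(0,\infty)^K$ the target is $c_\infty:=\tfrac1NS^{N/2}\sum_{i=1}^K\theta_i^{1-N/2}=\tfrac1N\bar S^{N/2}$. The upper bound $c(\rho')\le c_\infty$ for every $\rho'$ comes from the (possibly truncated) Aubin--Talenti competitor of Proposition \ref{lem:cless}, letting $\eps\to0^+$. For the matching lower bound I take minimizers $u^{(n)}$ of $c(\rho'^{(n)})$ with $\rho'^{(n)}\to0$ (they exist since \eqref{eq:groundlevel} holds on $(0,\eps)^K$) and apply Lemma \ref{lem:split}: every profile obeys $|\tilde u_k^{(i)}|_2\le\liminf_n|u_k^{(n)}|_2\le\lim_n\rho_k'^{(n)}=0$, so all profiles vanish. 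This is exactly the situation of \emph{Claim 1} in Lemma \ref{lem:infismin}: \eqref{eq:Me3} applied to $\wt H$ forces $\int_{\rn}\wt H(u^{(n)})\,dx\to0$, \eqref{eq:contradict1} holds, and the Sobolev inequality gives $\liminf_nc(\rho'^{(n)})=\liminf_n\tfrac1N|\nabla u^{(n)}|_2^2\ge\tfrac1N\bar S^{N/2}=c_\infty$. Combining the two bounds yields the claimed limit.

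\emph{Strict monotonicity (part (iii)) and main obstacle.} Assume $\rho_i\ge\rho_i'$ for all $i$ and $\rho_j>\rho_j'$ for some $j$; monotonicity already gives $c(\rho)\le c(\rho')$, so I argue by contradiction assuming equality. Let $u'$ minimize $c(\rho')$ (Lemma \ref{lem:infismin}); then $u'\in\cM\cap\cD(\rho')\subseteq\cM\cap\cD(\rho)$ with $J(u')=c(\rho')=c(\rho)$, so $u'$ also minimizes $c(\rho)$ and, by Theorem \ref{th:main1}\,(b), is a ground state solution to \eqref{eq:}. The hypothesis of part (iii) then forces $u'\in\cS(\rho)$, i.e. $|u_i'|_2=\rho_i$ for all $i$, while $u'\in\cD(\rho')$ gives $|u_i'|_2\le\rho_i'$; hence $\rho_i\le\rho_i'$ for every $i$, contradicting $\rho_j>\rho_j'$, so $c(\rho)<c(\rho')$. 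The main obstacle is not here — part (iii) is immediate once the minimizer is identified with a genuine ground state solution via Theorem \ref{th:main1}\,(b), available precisely under the assumptions (e.g. those of Theorem \ref{th:main3}) realising the hypothesis of (iii) — but rather in the lower semicontinuity and the lower bound of the limit, both of which hinge on the $\rho$-uniform control of the rescaling-and-projection step: the uniform $H^1$-bounds on the approximating (near-)minimizers coming from the openness of \eqref{eq:eta2} and Lemmas \ref{lem:bdaw1}, \ref{lem:coerc}, and the domination of the masses of all concentration profiles by the vanishing prescribed masses.
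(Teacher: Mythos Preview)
Your overall strategy is sound and part (iii) and the upper semicontinuity are essentially the paper's arguments. There are, however, two places where your proof departs from the paper's, and one of them hides a real gap.

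\textbf{Lower semicontinuity.} You shrink near-minimizers $w^{(n)}\in\cM\cap\cD(\rho'^{(n)})$ by $\sigma_n\to1$ into $\cD(\rho)$ and then project onto $\cM$, claiming that the uniform bounds $0<\inf_n|\nabla w^{(n)}|_2\le\sup_n|\nabla w^{(n)}|_2<\infty$ force $\max_sJ(s\star(\sigma_nw^{(n)}))=J(w^{(n)})+o(1)$. This ``consequently'' is the gap: since $w^{(n)}$ need not converge, an $H^1$-bound alone does not prevent the maximizer $a_n$ of $s\mapsto J(s\star(\sigma_nw^{(n)}))$ from escaping to $\infty$, and the comparison $J(a_n\star(\sigma_nw^{(n)}))\le\sigma_n^{2^*}J(a_n\star w^{(n)})+\tfrac{a_n^2}{2}(\sigma_n^2-\sigma_n^{2^*})|\nabla w^{(n)}|_2^2$ obtained from (A5) is useless unless $a_n$ is bounded. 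The missing step is a uniform lower bound on $|w^{(n)}|_{2_N}$ (when $\theta=0$) or on $|w^{(n)}|_{2^*}$ (when $\theta\in(0,\infty)^K$), which follows from $w^{(n)}\in\cM$, the growth assumptions, and Lemma~\ref{lem:bdaw1}; with this in hand, the constraint $M(a_n\star(\sigma_nw^{(n)}))=0$ does bound $a_n$ and your argument closes. The paper avoids this issue altogether: it takes genuine minimizers $u^{(n)}$ and runs the profile decomposition of Lemma~\ref{lem:split} exactly as in Lemma~\ref{lem:infismin}, showing that one profile $\tilde u^{(i)}$ lies in $\cM\cap\cD(\rho)$ with $J(\tilde u^{(i)})=\lim_nc(\rho^{(n)})$. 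Your route is more elementary once the boundedness of $a_n$ is supplied; the paper's route recycles machinery already in place.

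\textbf{Limit as $\rho\to0^+$ for $\theta=0$.} Your argument via the explicit bound $c(\rho)\ge\alpha^2/(2N)$ from Lemma~\ref{lem:bdaw2}, together with the observation that $c_\eps\to0$ as $\eps\to\infty$ (which is correct: (A1) controls a neighbourhood of $0$, (A3) the complement of a large ball, and on the remaining annulus $G(u)/|u|^{2_N}$ is bounded, hence dominated by $(\eta+\eps)|u|^{2_N}$ for large $\eps$), is a clean shortcut. The paper instead rescales the minimizers $u^{(n)}$ so that $|\nabla w^{(n)}|_2=1$, uses interpolation to get $|w^{(n)}|_{2_N}\to0$, and then Lemma~\ref{lem:phi} to conclude $J(u^{(n)})\ge s^2/2+o(1)$ for every $s>0$. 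Both work; yours is shorter. For $\theta\in(0,\infty)^K$ your lower bound via profile decomposition (all profiles vanish because their $L^2$-norms are dominated by $\rho'^{(n)}\to0$) is correct and equivalent to the paper's interpolation argument $|u^{(n)}|_q\to0$ for $q<2^*$; the upper bound is simply the hypothesis \eqref{eq:groundlevel}, so your appeal to the Aubin--Talenti competitor is superfluous.
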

	\begin{proof}
		Fix $\rho\in(0,\infty)^K$ and let $\rho^{(n)}\to\rho$. We begin by proving the upper semicontinuity of $c$ at $\rho$. Let $w\in\cM\cap\cD(\rho)$ such that $J(w)=c(\rho)$, denote $w_i^{(n)}:=\rho_i^{(n)}w_i/\rho_i$, and consider $w^{(n)}=(w_1^{(n)},\dots,w_K^{(n)})\in\cD(\rho^{(n)})$. Due to Lemma \ref{lem:phi}, for every $n$ there exists $s_n>0$ such that $s_n\star w^{(n)}\in\cM$. Note that
		\begin{equation}\label{eq:conts}
			\frac{N}{2}\irn\frac{H\bigl(s_n^{N/2}(\rho_1^{(n)}w_1/\rho_1,\dots,\rho_K^{(n)}w_K/\rho_K)\bigr)}{s_n^{N+2}}\,dx=\irn|\nabla w^{(n)}|^2\,dx\to \irn|\nabla w|^2\,dx.
		\end{equation}
		If $\limsup_ns_n=\infty$, then from (A2) and (A5) the left-hand side of \eqref{eq:conts} tends to $\infty$ up to a subsequence, which is a contradiction. If $\liminf_ns_n=0$, then from (A1), (A3), (A5) and \eqref{eq:eta2} and arguing as in Lemma \ref{lem:bdaw1} we obtain that the limit superior of the left-hand side of \eqref{eq:conts} is less than $|\nabla w|_2^2$, which is again a contradiction.
		There follows that, up to a subsequence, $s_n\to s$ for some $s>0$ and $s\star w\in\cM$. In view of Lemma \ref{lem:phi},
		\[
		\limsup_nc(\rho^{(n)})\le\lim_nJ(s_n\star w_n)=J(s\star w)=J(w)=c(\rho).
		\]
		
		Now we prove the lower semicontinuity of $c$ at $\rho$. Let $\rho^{(n)}\to\rho$ and $u^{(n)}\in\cM\cap\cD(\rho^{(n)})\subset\cM\cap\cD(2\rho)$ such that $J(u^{(n)})=c(\rho^{(n)})\le c(\rho/2)$. In view of Lemma \ref{lem:coerc}, $(u^{(n)})$ is bounded, hence we can consider the sequences $(\tilde{u}^{(i)})$ and $(y^{(i,n)})$ given by Lemma \ref{lem:split}; note that $\tilde{u}^{(i)}\in\cD$. We consider the case $\theta\in(0,\infty)^K$ because the other one (i.e., $\theta=0$) is similar and simpler.
		
		\textit{Claim:} There exists $i\ge0$ such that $\lim_nu^{(n)}(\cdot+y^{(i,n)})\to\tilde{u}^{(i)}\ne0$ in $\cD^{1,2}(\rn)^K$. The proof is similar to that of Lemma \ref{lem:infismin}, thus we focus only on the differences. If $\tilde{u}^{(i)}=0$ for every $i\ge0$, then as in \eqref{eq:ONE} we obtain the contradiction
		\begin{equation}\label{eq:eq}
		\frac{\bar{S}^{N/2}}{N}>c(\rho_1-\eps,\dots,\rho_K-\eps)\ge\limsup_nc(\rho^{(n)})=\limsup_nJ(u^{(n)})\ge\frac{\bar{S}^{N/2}}{N}.
		\end{equation}
	Let $i\ge0$ such that $\tilde{u}^{(i)}\ne0$ and define $v^{(n)}:=u^{(n)}(\cdot+y^{(i,n)})-\tilde{u}^{(i)}$. If $\liminf_n|\nabla v^{(n)}|_2>0$ and $|\nabla\tilde{u}^{(i)}|_2^2\ge\frac{N}{2}\int_{\rn}H(\tilde{u}^{(n)})\,dx$, then we prove that $R_n\to1$, where $R_n>0$ is such that $v^{(n)}(R_n\cdot)\in\cM$. In particular, if up to a subsequence $R_n\ge1$, then as in \eqref{eq:TWO} we get
	\begin{equation*}\begin{split}
	0<c(2\rho)\le c(\rho^{(n)})&\le\frac{1}{R_n}\int_{\rn}\frac{N}{4}H(v^{(n)})-g(v^{(n)})\,dx\le\int_{\rn}\frac{N}{4}H(v^{(n)})-G(v^{(n)})\,dx\\
	&\le c(\rho^{(n)})+o(1).
	\end{split}\end{equation*}
Next, as in \eqref{eq:THREE} we obtain again the contradiction \eqref{eq:eq}, which proves that $v^{(n)}\to0$ in $\cD^{1,2}(\rn)^K$ (up to a subsequence) or $|\nabla\tilde{u}^{(i)}|_2^2<\frac{N}{2}\int_{\rn}H(\tilde{u}^{(n)})\,dx$. In the latter case, we define $R>1$ such that $\tilde{u}^{(i)}(R\cdot)\in\cM$ as in \eqref{eq:FOUR} we get the contradiction
\begin{equation*}
c(\rho)\le J\bigl(\tilde{u}^{(i)}(R\cdot)\bigr)<\limsup_nc(\rho^{(n)})\le c(\rho),
\end{equation*}
where the last inequality is due to the upper semicontinuity. This proves the \textit{Claim}, which yields, together with the interpolation inequality, that $\tilde{u}^{(i)}\in\cM\cap\cD$ and so
\begin{equation*}
c(\rho)\le J(\tilde{u}^{(i)})=\lim_nJ(u^{(n)})=\lim_nc(\rho^{(n)}).
\end{equation*}
		
		
		Now we prove the behaviour of $c(\rho')$ as $\rho'\to0$. Let $\rho^{(n)}\to0^+$ and $u^{(n)}\in\cM\cap\cD(\rho^{(n)})$ such that $J(u^{(n)})=c(\rho^{(n)})$. Denote $s_n:=|\nabla u^{(n)}|_2^{-1}$ and $w^{(n)}:=s_n\star u^{(n)}$ and note that $s_n^{-1}\star w^{(n)}=u^{(n)}\in\cM$, $|\nabla w^{(n)}|_2=1$ and $$|w^{(n)}|_2^2=|u^{(n)}|_2^2=|\rho^{(n)}|^2\to0$$ as $n\to\infty$. In particular $\bigl(w^{(n)}\bigr)$ is bounded in $L^{2^*}(\rn)^K$ and so
		\[
		|w^{(n)}|_{2_N}\le|w^{(n)}|_2^\frac{2}{N+2}|w^{(n)}|_{2^*}^\frac{N}{N+2}\to0
		\]
		as $n\to\infty$. 
		Suppose that $\theta=0$. Then, in view of (A1) and (A3), for every $s>0$
		\[
		\lim_n\irn\frac{G(s^{N/2}w^{(n)})}{s^{N}}\,dx=0
		\]
		and, consequently,
		\[
		J(u^{(n)})=J(s_n^{-1}\star w^{(n)})\ge J(s\star w^{(n)})=\frac{s^2}{2}-\irn\frac{G(s^{N/2}w^{(n)})}{s^N}\,dx=\frac{s^2}{2}+o(1),
		\]
		whence $\lim_nJ(u^{(n)})=\infty$.
		
		Now suppose that $\theta\in (0,\infty)^K$. Since $|u^{(n)}|_2^2=|\rho^{(n)}|^2\to0$,  we get $u^{(n)}\to 0$ in $L^q(\R^N)^K$ for $2\leq q<2^*$. Arguing as above, for every $s>0$
		\[
		\lim_n\irn\frac{\wt G(s^{N/2}u^{(n)})}{s^{N}}\,dx=0,
		\]
		hence
		\[
		\lim_n\irn\frac{G(s^{N/2}u^{(n)})}{s^{-N}}\,dx=\lim_n\irn\frac{\wt G(s^{N/2}u^{(n)})}{s^{-N}}\,dx+\frac{s^{2^*}}{2^*}\sum_{j=1}^K\theta_j
		\lim_n\irn|u^{(n)}_j|^{2^*}\, dx.
		\]
		Consequently,
		\begin{eqnarray*}
		J(u^{(n)})&\geq &J(s\star u^{(n)})=\frac{s^2}{2}\int_{\R^N}|\nabla u^{(n)}|^2\,dx-\irn\frac{G(s^{N/2}u^{(n)})}{s^N}\,dx\\
		&=&\frac{s^2}{2}\lim_n\int_{\R^N}|\nabla u^{(n)}|^2\,dx-\frac{s^{2^*}}{2^*}\sum_{j=1}^K\theta_j
		\lim_n\irn|u^{(n)}_j|^{2^*}\, dx+o(1)
		\end{eqnarray*}
		for any $s>0$.
		Then, in view Lemma \ref{lem:BarS}
			\begin{eqnarray*}
			\lim_nJ(u^{(n)})&\geq &
			\max_{s>0}\; \frac{s^2}{2}\lim_n\int_{\R^N}|\nabla u^{(n)}|^2\,dx-\frac{s^{2^*}}{2^*}\sum_{j=1}^K\theta_j
			\lim_n\irn|u^{(n)}_j|^{2^*}\, dx\\
			&=&
			\frac{1}{N}\frac{\lim_n|\nabla u^{(n)}|_2^N}{\left(\sum_{j=1}^K\theta_j\lim_n|u_j^{(n)}|_{2^*}^{2^*}\right)^{N/2-1}}\\
			&\geq& \frac{1}{N}\bar{S}^{\frac{N}{2}}=\frac{1}{N}S^{N/2}\sum_{i=1}^K\theta_i^{1-N/2}
		\end{eqnarray*}
	and taking into account \eqref{eq:groundlevel} we obtain
 $$\lim_nJ(u^{(n)})=\frac{1}{N}S^{N/2}\sum_{i=1}^K\theta_i^{1-N/2}.$$

		Now assume that every ground state solution to \eqref{eq:} belongs to $\cS(\rho)$ and let $\rho,\rho'$ as in the statement. Let $u\in\cM\cap\cS(\rho)$ and $u'\in\cM\cap\cS(\rho')\subset\cM\cap\cD(\rho)\setminus\cS(\rho)$ such that $J(u)=c(\rho)$ and $J(u')=c(\rho')$. Clearly $c(\rho)\le c(\rho')$. If $c(\rho)=c(\rho')$, then $c(\rho)=J(u')$, with $u'\in\cM\cap\cD(\rho)\setminus\cS(\rho)$, which is a contradiction.
	\end{proof}
	
	\appendix
	\section{Sign of Lagrange multipliers}
	
	The following result concerns the sign of a Lagrange multiplier when the corresponding constraint is given by an inequality and the critical point of the restricted functional is a minimizer. The result is related with Clarke's \cite[Theorem 1]{Clarke}, however it is not clear whether we can apply it directly in our situation.
	
	\begin{Prop}\label{Prop:Lag}
		Let $\cH$ be a real Hilbert space and $f,\phi_i,\psi_j\in\cC^1(\cH)$, $i\in\{1,\dots,m\}$, $j\in\{1,\dots,n\}$. Suppose that for every $$x\in\bigcap_{i=1}^m\phi_i^{-1}(0)\cap\bigcap_{j=1}^n\psi_j^{-1}(0)$$ the differential $$\bigl(\phi_i'(x),\psi_j'(x)\bigr)_{1\le i\le m,1\le j\le n}\colon\cH\to\R^{m+n}$$ is surjective. If $\bar{x}\in\cH$ minimizes $f$ over
		\[
		\{x\in\cH:\phi_i(x)\le0\text{ for every }i=1,\dots,m\text{ and }\psi_j(x)=0\text{ for every }j=1,\dots,n\},
		\]
		then there exist $(\lambda_i)_{i=1}^m\in[0,\infty)^m$ and $(\sigma_j)_{j=1}^n\in\R^n$ such that
		\[
		f'(\bar{x})+\sum_{i=1}^m \lambda_i\phi_i'(\bar{x})+\sum_{j=1}^n \sigma_i\psi_j'(\bar{x})=0.
		\]
	\end{Prop}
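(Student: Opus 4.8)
The plan is to derive the conclusion from a first-order optimality condition at $\bar x$ combined with a finite-dimensional Farkas-type argument, the signs on the $\lambda_i$ falling out automatically from the inequality constraints. Write $A:=\{i:\phi_i(\bar x)=0\}$ for the set of active inequality constraints; for $i\notin A$ we have $\phi_i(\bar x)<0$, hence $\phi_i<0$ in a neighbourhood of $\bar x$ by continuity, and I would simply set $\lambda_i:=0$, so that these constraints play no local role. It therefore suffices to produce $\lambda_i\ge0$ ($i\in A$) and $\sigma_j\in\R$ with $f'(\bar x)+\sum_{i\in A}\lambda_i\phi_i'(\bar x)+\sum_{j=1}^n\sigma_j\psi_j'(\bar x)=0$.

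The core step is the linearized optimality condition, to which I refer as $(\ast)$:
\[
\langle f'(\bar x),h\rangle\ge0\quad\text{whenever}\quad\langle\phi_i'(\bar x),h\rangle\le0\ (i\in A)\ \text{and}\ \langle\psi_j'(\bar x),h\rangle=0\ (1\le j\le n).
\]
To prove $(\ast)$ I would first reduce to strictly feasible directions: using surjectivity of the differential of the constraints active at $\bar x$ (which the constraint qualification provides), pick $w$ with $\langle\phi_i'(\bar x),w\rangle=-1$ for $i\in A$ and $\langle\psi_j'(\bar x),w\rangle=0$ for all $j$, and for $\delta>0$ set $h_\delta:=h+\delta w$, so that $\langle\phi_i'(\bar x),h_\delta\rangle\le-\delta<0$ while the equality directions are preserved. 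For such a strict direction I would construct a feasible curve $s\mapsto x(s)=\bar x+s\,h_\delta+\zeta(s)$ with $\zeta(s)=o(s)$ by solving $\psi_j(x(s))=0$ for all $j$ via the implicit function theorem, the surjectivity hypothesis making $(\psi_j'(\bar x))_j$ a submersion. Expanding, $\phi_i(x(s))=s\langle\phi_i'(\bar x),h_\delta\rangle+o(s)\le-\tfrac{s\delta}{2}<0$ for small $s>0$ and every $i\in A$, while the inactive constraints stay negative by continuity; hence $x(s)$ is feasible, minimality of $\bar x$ gives $f(x(s))\ge f(\bar x)$, and dividing by $s$ and letting $s\to0^+$ yields $\langle f'(\bar x),h_\delta\rangle\ge0$. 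Letting $\delta\to0^+$ gives $(\ast)$.

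Finally I would convert $(\ast)$ into the multiplier identity by a finite-dimensional argument. Let $V:=\span\{\phi_i'(\bar x):i\in A\}+\span\{\psi_j'(\bar x):1\le j\le n\}$, a finite-dimensional (hence closed) subspace of $\cH$. Applying $(\ast)$ to both $h$ and $-h$ for $h\in V^\perp$ shows $\langle f'(\bar x),h\rangle=0$ on $V^\perp$, so $f'(\bar x)\in V$ and the entire problem now lives in the finite-dimensional space $V$. There, $(\ast)$ says precisely that $-f'(\bar x)$ lies in the polar of the polyhedral cone $\{h:\langle\phi_i'(\bar x),h\rangle\le0,\ \langle\psi_j'(\bar x),h\rangle=0\}$; by the classical Farkas lemma (the relevant finitely generated cone being closed) this polar equals $\{\sum_{i\in A}\lambda_i\phi_i'(\bar x)+\sum_j\sigma_j\psi_j'(\bar x):\lambda_i\ge0,\ \sigma_j\in\R\}$, which supplies the desired multipliers.

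The main obstacle is the realization step inside $(\ast)$: passing from an infinitesimally feasible direction to a genuine feasible curve. This is exactly where the surjectivity hypothesis enters, through a quantitative implicit function (Lyusternik--Graves) argument that simultaneously corrects the equality constraints to zero and keeps the active inequalities strictly negative; the perturbation $h\mapsto h_\delta$ is precisely what prevents a tangential active constraint ($\langle\phi_i'(\bar x),h\rangle=0$) from being violated by the $o(s)$ correction term. Everything else is either routine (the finite-dimensional Farkas lemma, the passage to $V$) or bookkeeping (the inactive constraints and complementary slackness).
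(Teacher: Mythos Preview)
Your proof is correct and takes a genuinely different route from the paper's. The paper proceeds via penalization: it applies Ekeland's variational principle to the auxiliary function $F(x)=\max\{f(x)-f(\bar x)+\eps,\phi_i(x),|\psi_j(x)|\}$, invokes Clarke's calculus for generalized gradients of a maximum to extract multipliers $(\tau,\lambda_i,\sigma_j)$ at a nearby point, and then lets $\eps\to0^+$; the surjectivity hypothesis enters only at the very end, to rule out the degenerate (Fritz--John) case $\tau=0$. Your argument is the classical KKT route: establish the linearized first-order condition $(\ast)$ by building feasible curves with the implicit function theorem (this is where surjectivity enters for you, and your perturbation $h\mapsto h_\delta$ correctly handles tangential active constraints), then convert $(\ast)$ into multipliers by a finite-dimensional Farkas lemma after the observation $f'(\bar x)\in V$. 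Your approach is more elementary in that it avoids nonsmooth analysis and Ekeland altogether; the paper's approach has the advantage of cleanly separating the existence of possibly degenerate multipliers from the constraint qualification, and would extend more readily to merely locally Lipschitz data.

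One subtlety worth flagging, though it is shared with the paper: both arguments actually use surjectivity of the differential of the \emph{active} constraints at $\bar x$, i.e.\ of $\bigl(\phi_i'(\bar x)\bigr)_{i\in A}\times\bigl(\psi_j'(\bar x)\bigr)_j\colon\cH\to\R^{|A|+n}$, whereas the hypothesis as written asserts surjectivity of the full system only at points where \emph{all} $\phi_i$ vanish. Your parenthetical ``which the constraint qualification provides'' does not literally follow from the stated hypothesis when $A\subsetneq\{1,\dots,m\}$. The paper's proof makes the identical tacit assumption (``Then the differential \dots\ is surjective''), and in the intended application---where $\phi_i(v)=|v_i|_2^2-\rho_i^2$ and $\psi_1=M$---the required surjectivity at $\bar x$ is immediate regardless of which constraints are active, so this is a cosmetic mismatch in the abstract statement rather than a defect in your reasoning.
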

	\begin{proof}
		Fix $\eps>0$ and define the functional $F\colon\cH\to[0,\infty)$ as
		\[
		F(x):=\max_{1\le i\le m,1\le j\le n}\{f(x)-f(\bar{x})+\eps,\phi_i(x),|\psi_j(x)|\}.
		\]
		and observe that $F$ is locally Lipschitz and bounded from below by $0$.
		Since $F(\bar{x})=\eps$, in view of the Ekeland variational principle \cite[Theorem 1.1]{Ekeland} there exists $z=z_\eps\in\cH$ such that
		\begin{eqnarray*}
			\|\bar{x}-z\|&\le&\sqrt{\eps},\\
			F(x)+\sqrt{\eps}\,\|x-z\|&\ge&F(z) \quad \forall x\in\cH.
		\end{eqnarray*}
		From \cite[Propositions 6, 8]{Clarke} there follows that $0\in\partial F(z)+\sqrt{\eps}\,\partial\|\cdot-z\|(z)$, where $\partial$ stands for the generalized gradient \cite[Definition 1]{Clarke}. Hence, there exists $\xi=\xi_\eps\in \partial F(z)$ such that $-\xi\in \sqrt{\eps}\,\partial\|\cdot-z\|(z)$. In view of \cite[Propositions 1, 9]{Clarke},
		$\|\xi\|\leq \sqrt{\eps}$ and $\xi$ lies in the convex hull of $f(z)-f(\bar{x})+\eps$, $\phi_i(z)$, and $|\psi_j(z)$|, i.e., there exists $\tau,\lambda_1,\dots,\lambda_m,\hat{\sigma}_1,\dots,\hat{\sigma}_n\geq 0$ depending on $\eps$, 
		such that $\tau+\lambda_1+\dots+\lambda_m+\hat{\sigma}_1+\dots+\hat{\sigma}_n=1$,
		\[
		\xi\in\biggl(\tau f'(z)+\sum_{i=1}^m\lambda_i\phi_i'(z)+\sum_{j=1}^n\hat{\sigma}_j\partial|\psi_j|(z)\biggr),
		\]
		and $\lambda_i=0$ (resp. $\hat{\sigma}_j=0$) if $\phi_i(z)\le0$ (resp. $\psi_j(z)=0$).
		
		For every $j\in\{1,\dots,n\}$ such that $\psi_j(z)\ne0$ we have
		\[
		\partial|\psi_j|(z)=\{\textup{sign}\bigl(\psi_j(z)\bigr)\psi_j'(z)\}.
		\]
		If $j\in\{1,\dots,n\}$ is as before, we define $\sigma_j:=\textup{sign}\bigl(\psi_j(z)\bigr)\hat{\sigma}_j$, otherwise we define $\sigma_j:=0$. In particular, we have
		\[
		\sum_{j=1}^n\hat{\sigma}_j\partial|\psi_j|(z)=\Biggl\{\sum_{j=1}^n\sigma_j\psi_j'(z)\Biggr\}.
		\]
		
		Summing up, we obtain the following: for every $\eps>0$ there exist $\tau\ge0$, $(\lambda_i)_{i=1}^m\in[0,\infty)^m$, $(\sigma_j)_{j=1}^n\in\R^n$ and $z\in B(\bar{x},\sqrt{\eps})$ such that
		\begin{eqnarray*}
			&\xi:=\tau f'(z)+\sum_{i=1}^m\lambda_i\phi_i'(z)+\sum_{j=1}^n\sigma_j\psi_j'(z)\in B(0,\sqrt{\eps}),\\
			&\tau+\sum_{i=1}^m\lambda_i+\sum_{j=1}^n|\sigma_j|=1.
		\end{eqnarray*}
		Letting $\eps\to0^+$ we get
		\begin{equation}\label{eq:tau}
			\tau f'(\bar{x})+\sum_{i=1}^m\lambda_i\phi_i'(\bar{x})+\sum_{j=1}^n\sigma_j\psi_j'(\bar{x})=0
		\end{equation}
		for some $\tau\ge0$, $(\lambda_i)_{i=1}^m\in[0,\infty)^m$, $(\sigma_j)_{j=1}^n\in\R^n$ such that
		\[
		\tau+\sum_{i=1}^m\lambda_i+\sum_{j=1}^n|\sigma_j|=1.
		\]
		Suppose by contradiction that $\tau=0$, whence
		\begin{equation}\label{eq:app}
			\sum_{i=1}^m\lambda_i\phi_i'(\bar{x})+\sum_{j=1}^n\sigma_j\psi_j'(\bar{x})=0.
		\end{equation}
		If $\phi_i(\bar{x})<0$ for some $i\in\{1,\dots,m\}$, then of course $\lambda_i=0$, hence, up to considering a (possibly empty) subset of $\{1,\dots,m\}$ in \eqref{eq:app}, we can assume that $\phi_1(\bar{x})=\dots=\phi_{m_0}(\bar{x})=0$ and $\lambda_{m_0+1}=\ldots=\lambda_m=0$ for some $0\leq m_0\leq m$, where $m_0=0$ denotes that $\lambda_i=0$ for all $i\in \{1,\dots,m\}$, whereas  $m_0=m$ denotes $\phi_1(\bar{x})=\dots=\phi_{m}(\bar{x})=0$. Then the differential $$\bigl(\phi_1'(\bar{x}),\dots,\phi_{m_0}'(\bar{x}),\psi_1'(\bar{x}),\dots,\psi_n'(\bar{x})\bigr)\colon\cH\to\R^{m_0+n}$$ is surjective and so, for every $i\in\{1,\dots,m_0\}$ (resp. $j\in\{1,\dots,n\}$), we can choose $y\in\cH$ such that $\phi_i'(\bar{x})(y)\ne0$, $\phi_k'(\bar{x})(y)=0$ for every $k\in\{1,\dots,m_0\}\setminus\{i\}$ and $\psi_j'(\bar{x})(y)=0$ for every $j\in\{1,\dots,n\}$ (resp. $\psi_j'(\bar{x})(y)\ne0$, $\psi_k'(\bar{x})(y)=0$ for every $k\in\{1,\dots,n\}\setminus\{j\}$ and $\phi_i'(\bar{x})(y)=0$ for every $i\in\{1,\dots,m_0\}$). This and \eqref{eq:app} implies $\lambda_i=0$ for every $i\in\{1,\dots,m_0\}$ and $\sigma_j=0$ for every $j\in\{1,\dots,n\}$, a contradiction. We can thus divide both sides of \eqref{eq:tau} by $\tau$ and, up to relabelling $\lambda_i$ and $\sigma_j$ ($i\in\{1,\dots,m_0\}$, $j\in\{1,\dots,n\}$), conclude the proof.
	\end{proof}

\section{A Sobolev-type constant}\label{AppB}
	
Let $\theta=(\theta_1,\dots,\theta_K)\in(0,\infty)^K$,
$$\bar S:= \inf_{u\in\cD^{1,2}(\R^N)^K\setminus\{0\}} \frac{\int_{\R^N} |\nabla u|^2 \, dx}{\left(\sum_{j=1}^K\theta_j\int_{\R^N} |u_j|^{2^*} \, dx\right)^{2/2^*}},$$
and, clearly, in view of the Sobolev embeddings, $\bar S>0$.
\begin{Lem}\label{lem:BarS}
$\bar S$ is attained by $(\theta_1^{-\frac{N-2}{4}} u_1,...,\theta_K^{-\frac{N-2}{4}} u_K)$, where  $u_j$ are Aubin--Talenti instantons. Moreover
$$\bar S:=\left(\sum_{j=1}^K\theta_j^{-\frac{N-2}{2}}\right)^{2/N}S.$$
\end{Lem}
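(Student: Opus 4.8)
The plan is to prove both assertions simultaneously by decoupling the vector problem into $K$ scalar Sobolev quotients and then reducing to a finite-dimensional minimization over the $L^{2^*}$-masses of the components. The lower bound is the substantive direction. First I would apply the scalar Sobolev inequality componentwise, $\int_{\rn}|\nabla u_j|^2\,dx\ge S\big(\int_{\rn}|u_j|^{2^*}\,dx\big)^{2/2^*}$ for each $j$, and set $a_j:=\big(\int_{\rn}|u_j|^{2^*}\,dx\big)^{1/2^*}\ge0$. Since the Dirichlet energy is additive in the components, this gives $\int_{\rn}|\nabla u|^2\,dx\ge S\sum_{j=1}^Ka_j^2$, so that
$$\frac{\int_{\rn}|\nabla u|^2\,dx}{\big(\sum_{j=1}^K\theta_j\int_{\rn}|u_j|^{2^*}\,dx\big)^{2/2^*}}\ \ge\ S\,\frac{\sum_{j=1}^Ka_j^2}{\big(\sum_{j=1}^K\theta_ja_j^{2^*}\big)^{2/2^*}},$$
and it remains to minimize the scalar quotient on the right over $a\in[0,\infty)^K\setminus\{0\}$.

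For the upper bound and the attainment I would test the quotient at the explicit candidate $v:=(\theta_1^{-(N-2)/4}U,\dots,\theta_K^{-(N-2)/4}U)$, where $U$ is an Aubin--Talenti instanton normalized by $\int_{\rn}|U|^{2^*}\,dx=1$, $\int_{\rn}|\nabla U|^2\,dx=S$ (one may take the same $U$ in every slot since the components are decoupled). Using $(N-2)2^*/4=N/2$ one gets $\int_{\rn}|\nabla v|^2\,dx=S\sum_j\theta_j^{-(N-2)/2}$ and $\sum_j\theta_j\int_{\rn}|v_j|^{2^*}\,dx=\sum_j\theta_j^{1-N/2}=\sum_j\theta_j^{-(N-2)/2}$; since $1-2/2^*=2/N$, the quotient at $v$ equals $S\big(\sum_j\theta_j^{-(N-2)/2}\big)^{2/N}$. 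The same candidate should arise from the lower-bound minimization: by $0$-homogeneity I would normalize $\sum_j\theta_ja_j^{2^*}=1$ and minimize $\sum_ja_j^2$, and the Lagrange condition $2a_j=\mu\,2^*\theta_ja_j^{2^*-1}$ forces $a_j\propto\theta_j^{-1/(2^*-2)}=\theta_j^{-(N-2)/4}$ (as $2^*-2=4/(N-2)$), whose value is again $\big(\sum_j\theta_j^{-(N-2)/2}\big)^{2/N}$. Matching the two bounds then yields both $\bar S=\big(\sum_j\theta_j^{-(N-2)/2}\big)^{2/N}S$ and that $v$ is a minimizer.

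The hard part will be certifying that the critical configuration $a_j\propto\theta_j^{-(N-2)/4}$ produced by the Lagrange multiplier rule is genuinely the global minimizer of the reduced quotient, rather than a saddle. Because $t\mapsto t^{2^*/2}$ is convex (as $2^*>2$), the behaviour of $a\mapsto\sum_ja_j^2\big/\big(\sum_j\theta_ja_j^{2^*}\big)^{2/2^*}$ on the boundary of the positive cone --- where one or more $a_j$ vanish and the problem reduces to a smaller value of $K$ --- must be analysed with care, since the interior stationary point and the boundary configurations compete for the infimum. I would try to settle this by combining a convexity/Hölder estimate for the denominator with an induction on $K$ that compares the interior value against every face of $\{a\ge0\}$; this is the step that really pins down the constant and the attainment.
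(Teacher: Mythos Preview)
Your worry in the final paragraph is exactly right, and in fact it is fatal: the interior critical point of the reduced quotient is \emph{not} the global minimizer. After the substitution $b_j:=a_j^2$ and the normalization $\sum_jb_j=1$, minimizing $F(a)=\sum_ja_j^2\big/\big(\sum_j\theta_ja_j^{2^*}\big)^{2/2^*}$ amounts to \emph{maximizing} $\sum_j\theta_jb_j^{2^*/2}$ over the simplex. Since $2^*/2>1$, each summand $b_j\mapsto\theta_jb_j^{2^*/2}$ is strictly convex, so this sum attains its maximum over the simplex at a vertex, i.e.\ when exactly one $b_{j_0}$ is nonzero. Consequently $\inf_{a\ne0}F(a)=\min_j\theta_j^{-2/2^*}=(\max_j\theta_j)^{-(N-2)/N}$, which is strictly smaller than $\big(\sum_j\theta_j^{-(N-2)/2}\big)^{2/N}$ whenever $K\ge2$. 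Your componentwise lower bound therefore only yields $\bar S\ge S(\max_j\theta_j)^{-(N-2)/N}$, not the value claimed; the convexity/H\"older argument you propose cannot repair this because the convexity points the wrong way.

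Worse, this mirrors the true behaviour of $\bar S$: testing with $u=(0,\dots,0,U,0,\dots,0)$, a single Aubin--Talenti instanton placed in the slot $j_0$ of largest $\theta_{j_0}$, gives the quotient $S\,\theta_{j_0}^{-(N-2)/N}$, strictly below $S\big(\sum_j\theta_j^{-(N-2)/2}\big)^{2/N}$. Hence the identity in the lemma cannot hold as stated for $K\ge2$, and no argument---yours or the paper's---can close the gap. The paper's own proof has the matching blind spot: from $I'(u)=0$ one obtains $-\Delta u_j=\theta_j|u_j|^{2^*-2}u_j$ for each $j$, but nothing in that argument rules out $u_j\equiv0$ for some components, and in fact the genuine minimizer has all but one component vanishing.
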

\begin{proof}
We prove that $\bar{S}$ is attained. Let $I\colon\cD^{1,2}(\rn)^K\to\R$ be defined as
\[
I(u)=\irn\frac12|\nabla u|^2-\frac1{2^*}\sum_{j=1}^K\theta_j|u_j|^{2^*}\,dx.
\]
If $u=(u_1,...,u_K)\in\cD^{1,2}(\rn)^K$, then
\[
I'(u)=0 \Leftrightarrow -\Delta u_j=\theta_j|u_j|^{2^*-2}u_j \text{ for every } j\in\{1,\dots,K\}.
\]
Define the Nehari manifold for $I$ as
\[
\cN:=\left\{u\in\cD^{1,2}(\rn)^K\setminus\{0\} \, : \, |\nabla u|_2^2=\sum_{j=1}^K\theta_j|u_j|_{2^*}^{2^*}\right\}
\]
ad note that, if $u\in\cN$, then
\[\begin{split}
I(u)&=\frac1N|\nabla u|_2^2=\frac1N\sum_{j=1}^K\theta_j|u_j|_{2^*}^{2^*}\\
\frac{\int_{\R^N} |\nabla u|^2 \, dx}{\left(\sum_{j=1}^K\theta_j\int_{\R^N} |u_j|^{2^*} \, dx\right)^{2/2^*}}&=|\nabla u|_2^{2(2^*-2)/2^*}=\left(\sum_{j=1}^K\theta_j|u_j|_{2^*}^{2^*}\right)^{(2^*-2)/2^*},
\end{split}\]
hence $\frac{\int_{\R^N} |\nabla u|^2 \, dx}{\left(\sum_{j=1}^K\theta_j\int_{\R^N} |u_j|^{2^*} \, dx\right)^{2/2^*}}=A$ if and only if $I(u)=\frac1N A^{2^*/(2^*-2)}$. Moreover, if $u\in\cD^{1,2}(\rn)^K\setminus\{0\}$, then $tu\in\cN$ for some $t>0$ and the fraction in the definition of $\bar{S}$ does not depend on the rescaling $t\mapsto tu$, therefore
\[
\bar S= \inf_{u\in\cN} \frac{\int_{\R^N} |\nabla u|^2 \, dx}{\left(\sum_{j=1}^K\theta_j\int_{\R^N} |u_j|^{2^*} \, dx\right)^{2/2^*}}
\]
and $\inf_\cN I=\frac1N\bar{S}^{2/(2^*-2)^*}>0$. 
Let $u^{(n)}\in\cN$ be such that $I(u^{(n)})\to\inf_\cN I$. Up to replacing $u^{(n)}=(u_1^{(n)},...,u_K^{(n)})$ with $(|u_1^{(n)}|,...,|u_K^{(n)}|)$, we can also assume that $u_j^{(n)}\ge0$ for every $n,j$.  In virtue of Ekeland's variational principle \cite{Willem}, we can assume $I'(u^{(n)})\to0$. Since $\inf_\cN I>0$, $u^{(n)}\not\to0$ in $L^{2^*}(\rn)^K$, thus, in view of Solimini's theorem \cite[Theorem 1]{Solimini}, see also \cite[Lemma 5.3]{Tintarev}), there exist $(s_n)\subset (0,\infty)$, $(y_n)\subset \R^N$ and $u\in\cD^{1,2}(\rn)^K\setminus\{0\}$,  such that $s_n^{1/2}u^{(n)}(s_n\cdot +y_n))\weakto u$ in $\cD^{1,2}(\rn)^K$ and $s_n^{1/2}u^{(n)}(s_n\cdot +y_n))\to u$ a.e. in $\rn$ up to a subsequence. In particular, $I'(u)=0$ and so $u\in\cN$. Observe that each component of $u$ is of the form $u_j=\theta_j^{-\frac{N-2}{4}} u_j^0$, where  $u_j^0$ is an Aubin--Talenti instanton.
Therefore
$$\bar S= \frac{\int_{\R^N} |\nabla u|^2 \, dx}{\left(\sum_{j=1}^K\theta_j\int_{\R^N} |u_j|^{2^*} \, dx\right)^{2/2^*}}
=\left(\sum_{j=1}^K\theta_j^{-\frac{N-2}{2}}\right)^{2/N}S.\qedhere$$
\end{proof}
	
	\section*{Acknowledgements}
	The authors are grateful to the reviewer for the fruitful remarks concerning the first version of this paper.
	The authors were partially supported by the National Science Centre, Poland (Grant No. 2017/26/E/ST1/00817). J. Mederski was also partially supported by the Alexander von Humboldt Foundation (Germany) and by the Deutsche Forschungsgemeinschaft (DFG, German Research Foundation) -- Project-ID 258734477 -- SFB 1173 during the stay at Karlsruhe Institute of Technology.

\end{document}